%
%

\documentclass[12pt]{amsart}

\title{The distribution of $a$-numbers of hyperelliptic curves in characteristic three}
\author{Derek Garton}
\address{Fariborz Maseeh Department of Mathematics and Statistics, Portland State University}
\email{\href{mailto:gartondw@pdx.edu}{gartondw@pdx.edu}}
\author{Jeffrey Lin Thunder}
\address{Department of Mathematical Sciences, Northern Illinois University}
\email{\href{mailto:jthunder@niu.edu}{jthunder@niu.edu}}
\author{Colin Weir}
\address{The Tutte Institute for Mathematics and Computing}
\email{\href{colinoftheweirs@gmail.com}{colinoftheweirs@gmail.com}}
\date{\today}


%
%
\usepackage{amssymb,
url,
MnSymbol,
enumitem,
color,
mathrsfs}
\usepackage{amsmath}
\usepackage{amsfonts}
\usepackage{amssymb}
\usepackage{colonequals}

\usepackage{hyperref}
\hypersetup{breaklinks=true,
colorlinks=true,
urlcolor=blue,
linkcolor=cyan
}
\usepackage[top=1in,bottom=1in,left=1in,right=1in]{geometry}
\usepackage[capitalise,nameinlink]{cleveref}

%
%
\makeatletter
\def\imod#1{\allowbreak\mkern8mu{(\operator@font mod}\,\,#1)}
\makeatother

\theoremstyle{plain}
\newtheorem{thm}{Theorem}[subsection]
\newtheorem{prop}[thm]{Proposition}
\newtheorem{lemma}[thm]{Lemma}
\newtheorem{conj}[thm]{Conjecture}
\newtheorem{cor}[thm]{Corollary}

\theoremstyle{remark}

\newtheorem{example}[thm]{Example}

\theoremstyle{definition}
\newtheorem{defn}[thm]{Definition}

\newtheorem*{namedthm}{\namedthmname}
\newcounter{namedthm}

\newcommand\numberthis{\refstepcounter{equation}\tag{\theequation}}

\newcounter{primes}

\makeatletter

\makeatother

%
%


\DeclareMathOperator{\IC}{IC}
\DeclareMathOperator{\Aut}{Aut}
\DeclareMathOperator{\PGL}{PGL}

\DeclareMathOperator{\codim}{codim}

\DeclareMathOperator{\Span}{span}

\DeclareMathOperator{\Spec}{Spec}
\DeclareMathOperator{\Hom}{Hom}
\DeclareMathOperator{\Jac}{Jac}
\DeclareMathOperator{\gl}{GL}
\DeclareMathOperator{\der}{d\!}
\DeclareMathOperator{\ord}{ord}
\DeclareMathOperator{\Div}{Div}
\DeclareMathOperator{\divi}{div}

\newcommand{\lv}{\ensuremath{\left\vert}}
\newcommand{\rv}{\ensuremath{\right\vert}}
\newcommand{\lp}{\ensuremath{\left(}}
\newcommand{\rp}{\ensuremath{\right)}}
\newcommand{\lb}{\ensuremath{\left\{}}
\newcommand{\rb}{\ensuremath{\right\}}}

\newcommand{\A}{\ensuremath{\mathbb{A}}}
\newcommand{\FF}{\ensuremath{\mathbb{F}}}
\newcommand{\Z}{\ensuremath{\mathbb{Z}}}
\newcommand{\Fq}{\ensuremath{{\mathbb{F}_q}}}

\newcommand{\cA}{\ensuremath{\mathcal{A}}}
\newcommand{\cH}{\ensuremath{\mathcal{H}}}

\newcommand{\cP}{{\ensuremath{\mathcal{P}}}}
\newcommand{\cPe}{\cP _{\epsilon}}

\newcommand{\ba}{\mathbb{A}}

\newcommand{\fa}{\mathfrak{A}}
\newcommand{\fc}{\mathfrak{C}}

\newcommand{\bfa}{F_{\ba}}

\newcommand{\uo}{\mathbf{0}}
\newcommand{\ux}{\mathbf{x}}
\newcommand{\ua}{\mathbf{\alpha }}
\newcommand{\ub}{\mathbf{\beta }}

\newcommand{\mapf}{\phi}
\newcommand{\mapg}{\gamma}

\begin{document}

\begin{abstract}
In this paper we present a new approach to counting the proportion of hyperelliptic curves of genus $g$ defined 
over a finite field $\Fq$ with a given $a$-number. 
In characteristic three this method gives exact probabilities for curves of the form $Y^2=f(X)$ with 
$f(X)\in\Fq[X]$ monic and cubefree, probabilities that match the data presented by Cais et al.\ in previous work.
These results are sufficient to derive precise estimates (in terms of $q$) for these probabilities when 
restricting to squarefree $f$.
As a consequence, for positive integers $a$ and $g$ we show that the nonempty strata of the moduli space 
of hyperelliptic curves of genus $g$ consisting of those curves with $a$-number $a$ are of codimension $2a-1$.
This contrasts with the analogous result for the moduli space of abelian varieties in which the codimensions 
of the strata are $a(a+1)/2$.
Finally, our results allow for an alternative heuristic conjecture to that of Cais et al.;
one that matches the available data.
\end{abstract}

\maketitle
\tableofcontents

\section{Introduction}\label{intro}

Let $p$ be a prime, let $q$ be a power of $p$, and let $\Fq$ the finite field with $q$ elements. 
Let $X$ be transcendental over $\Fq$ so that $\Fq (X)$ is a 
field of rational functions.
If $C$ is a smooth algebraic curve over $\Fq$, the \emph{$a$-number} of $C$ is an invariant of the 
$p$-torsion group scheme $\Jac{\lp C\rp}[p]$ which we will denote by $a\lp C\rp$; it is defined as 
\[
a\lp C\rp
=\dim_{\Fq}{\lp\Hom{\lp\Spec_{\textsf{Grp}/\Fq}(\Fq[X]/X^p)
,\Jac{(C)}[p]\rp}\rp}.
\]
If $C$ has genus $g$, then $0\leq a(C)\leq g$ (see, e.g.,~\cite{O}).  When $g=1$, an elliptic curve is called supersingular if and only if $a(C)=1$. An elliptic curve is called ordinary otherwise.  Analogously, we say a curve $C$ is \emph{ordinary} if $a(C)=0$, as this is generic.  At the other extreme, in~\cite{ooo} Oort shows that a curve $C$ is isomorphic to a product of $g$ supersingular elliptic curves if and only if $a(C)=g$.  The purpose of this paper is to study the distribution of $a$-numbers of hyperelliptic curves to 
address such questions as: what is the probability a hyperelliptic curve is ordinary?

For any monic cubefree $f\in\Fq [X]$ let $H_f$ denote the (possibly singular) variety over $\Fq$ with affine equation
\[
Y^2=f(X).
\]
If $H_f$ has smooth model $H$ we will write $a\lp H_f\rp$ for $a(H)$.
In order to study the distribution of $a\lp H_f\rp$ as $f$ varies we adopt the following 
parameterizing sets: for $\epsilon\in\lb1,2\rb$ and $g\in\Z_{\geq0}$ let
\[
\cPe (g) = \lb f\in\FF_q[X]\mid f\text{ monic, cubefree,}\ \deg(f)=2g+\epsilon\rb.
\]
We often restrict ourselves to squarefree polynomials, so let
\[
\cPe '(g)
=\lb f\in\cPe (g)\mid f\text{ squarefree}\rb.
\]
Furthermore, since we are concerned with the proportions of cubefree and squarefree polynomials which yield varieties of given $a$-number, we set
\[
\mu_{\epsilon,g}(a)
=\frac{\lv\lb f\in\cPe (g)\mid a\lp H_f\rp=a\rb\rv}
{\lv\cPe (g)\rv}
\hspace{20px}\text{and}\hspace{20px}
\mu '_{\epsilon,g}(a)
=\frac{\lv\lb f\in\cPe '(g)\mid a\lp H_f\rp=a\rb\rv}
{\lv\cPe '(g)\rv}
\]
for non-negative integers $a$. We will explicitly compute $\mu _{\epsilon ,g}(a)$ in the case $p=3$. 
Specifically, we prove the
following. (Here and throughout we use the bracket notation $[\cdot ]$ for the greatest integer function.)

\subsection{Main results}\label{resultz}

In \cref{consequences}, we prove the following theorems.

%

\begin{thm}\label{bigone}
Suppose $p=3$ and $g,a$ are nonnegative integers.
If $g=0$, then
\[
\mu_{2,g}(a)=\mu_{1,g}(a)=
\begin{cases}
0
&\text{if }a>0\\
1
&\text{if }a=0,
\end{cases}
\]
so suppose $g\geq1$.

If $g\equiv 0\pmod 3$ and $\epsilon = 1,2$ or if $g\equiv 2\pmod 3$ and $\epsilon = 2$, then
\[
\mu _{\epsilon ,g}(a)=
\begin{cases}
1-q^{-1}&\text{if $a=0$,}\\
q^{-2a+1}\lp1-q^{-2}\rp&\text{if $0 < a < [(g+1)/3]$,}\\
q^{-2a+1}&\text{if $a=[(g+1)/3]$,}\\
0&\text{if $a>[(g+1)/3]$.}
\end{cases}
\]

On the other hand, if $g\equiv 1\pmod 3$ and $\epsilon = 1,2$ or if $g\equiv 2\pmod 3$ and $\epsilon =1$, then
\begin{itemize}
\item
if $g\leq 2$, then
\[
\mu_{\epsilon,g}(a)=
\begin{cases}
0
&\text{if }a>1\\
q^{-1}\lp\frac{1-q^{-1}}{1-q^{-2}}\rp
&\text{if }a=1,\\
\frac{1-q^{-1}}{1-q^{-2}}
&\text{if }a=0
\end{cases}
\]
\item
and if $g>3$, then
\[
\mu _{\epsilon ,g}(a)=
\begin{cases}
1-q^{-1}&\text{if $a=0$,}\\
q^{-2a+1}\lp1-q^{-2}\rp&\text{if $0<a< [(g-1)/3]$,}\\
q^{-2a+1}\lp\frac{1+q^{-1}-q^{-2}}{1+q^{-1}}\rp&\text{if $a=[(g-1)/3]$,}\\
q^{-2a+1}\lp\frac{1}{1+q^{-1}}\rp&\text{if $a=[(g-1)/3]+1$,}\\
0&\text{if $a>[(g-1)/3]+1$.}
\end{cases}
\]
\end{itemize}
\end{thm}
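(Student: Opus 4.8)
The plan is to convert the computation of $a(H_f)$ into a rank computation for an explicit matrix attached to the coefficients of $f$, and then to determine the exact distribution of that rank as $f$ ranges over $\cPe(g)$. Write $f=\sum_k c_k X^k$, so that $c_{2g+\epsilon}=1$ and $c_k=0$ for $k<0$ or $k>2g+\epsilon$. Since $p=3$ we have $f^{(p-1)/2}=f$, and a direct computation of the Cartier operator $\cC$ on the holomorphic differentials of the smooth model in the basis $X^{i-1}\,dX/Y$ gives $\cC(X^{i-1}\,dX/Y)=\sum_j c_{3j-i}\,X^{j-1}\,dX/Y$. Hence, when $f$ is squarefree (so the genus is exactly $g$), the Cartier matrix is $M=(c_{3j-i})_{1\le i,j\le g}$; because Frobenius is a bijection on $\Fq^g$, the semilinearity of $\cC$ is immaterial and $a(H_f)=g-\rk M$. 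For merely cubefree $f$ I would factor $f=t s^2$ with $t,s$ squarefree and coprime, so that the smooth model is $H_t$ and $a(H_f)=a(H_t)=g_t-\rk M_t$ is governed by the genus-$g_t$ matrix of the squarefree part. Thus $\mu_{\epsilon,g}(a)$ becomes a weighted sum, over decompositions $\deg t+2\deg s=2g+\epsilon$, of the proportion of squarefree $t$ with $\rk M_t=g_t-a$.

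The key structural step is to exploit the period-$3$ pattern of the indices $3j-i$. Since $M_{j,\,i+3}=M_{j-1,\,i}$, grouping the columns by $i \bmod 3$ exhibits $M$ as three side-by-side Toeplitz blocks $T^{(0)},T^{(1)},T^{(2)}$, where $T^{(\rho)}$ has entries $c_{3(j-t)-\rho}$ and therefore uses only the coefficients of $f$ in a single residue class modulo $3$. These three coefficient sets are disjoint, so the blocks are built from independent data; their column counts are $[g/3],[(g+2)/3],[(g+1)/3]$, which sum to $g$, and the forced leading coefficient $c_{2g+\epsilon}=1$ pins the top entry of exactly one block, namely the block $\rho^\ast\equiv g-\epsilon\pmod 3$. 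As $\rk M$ is the dimension of the sum of the three column spaces inside $\Fq^g$, I would reduce the problem to the joint rank distribution of three independent banded Toeplitz matrices over $\Fq$, one of which carries a pinned leading $1$ (and so cannot vanish). In particular the largest attainable $a$-number equals the largest column count among the two \emph{unpinned} blocks, which recovers the thresholds $[(g+1)/3]$ and $[(g-1)/3]+1$; and the dichotomy in the theorem corresponds exactly to whether those two unpinned blocks have equal or unequal column counts. One checks across the six pairs $(g\bmod 3,\epsilon)$ that the equal case is precisely the first regime (always when $g\equiv 0$, and when $g\equiv 2$ with $\epsilon=2$) and the unequal case the second.

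With this reduction in hand I would first compute the rank distribution of a single banded Toeplitz matrix over $\Fq$ by the standard finite-field counts, then take the product over the three independent blocks to obtain the generic strata $q^{-2a+1}(1-q^{-2})$. When the two unpinned blocks have equal column count, the top $a$-value is reached by collapsing either one and the counts collapse to the clean $q^{-2a+1}$; when they are unequal, the larger block governs $a_{\max}$ while the smaller yields the near-maximal stratum $a_{\max}-1$, and the joint degeneration of the two competing blocks produces the factors $\tfrac{1+q^{-1}-q^{-2}}{1+q^{-1}}$ and $\tfrac{1}{1+q^{-1}}$ through an inclusion–exclusion over which block degenerates. Finally I would re-insert the cubefree weighting from the first paragraph, summing the genus-$g_t$ distributions against the number of coprime squarefree completions $s$ and dividing by $\lv\cPe(g)\rv$. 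I expect the main obstacle to be exactly this last assembly: verifying that the cubefree (rather than squarefree) normalization forces the contributions from all intermediate genera $g_t$ to telescope into the stated closed forms, while controlling the coprimality constraint between $t$ and $s$. The geometric-series shape of the generic term strongly suggests a clean generating-function identity here, and I would conclude by checking the degenerate cases $g=0$ and $g\le 2$ directly, and confirming that $g=3$ falls under the first regime as required.
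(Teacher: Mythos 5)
Your setup is sound and is in fact equivalent to the paper's: writing the Cartier--Manin matrix as three column blocks indexed by the residue of the column index mod $3$ is just the matrix form of the identity $f=c_0(f)^3+c_1(f)^3X+c_2(f)^3X^2$ and of the kernel condition $c_2(Q)c_0(f)+c_1(Q)c_1(f)+c_0(Q)c_2(f)=0$ (\cref{Apolys}, \cref{subsum}). The gap is in how you propose to finish. Each block $T^{(\rho)}$ is the matrix of multiplication by the fixed polynomial $c_\rho(f)$, hence has full column rank whenever $c_\rho(f)\neq 0$; the corank of $M$ therefore comes \emph{entirely} from relations mixing columns of different blocks, i.e.\ from polynomial syzygies $\sum_i P_i\,c_i(f)=0$ subject to degree bounds. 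Consequently ``compute the rank distribution of a single banded Toeplitz matrix \ldots\ then take the product over the three independent blocks'' cannot yield the theorem: the corank of a concatenation is not a function of the blockwise ranks, and convolution matrices are far from uniformly random, so standard finite-field rank counts do not apply. The paper's entire \cref{thunder} exists to supply exactly this missing computation: \cref{anumberheights} identifies $a(H_f)$ with $m-\mu_1(V)$ (up to a case-dependent shift of $1$) for the plane $V\subset F^3$ orthogonal to $\lp c_0(f),c_1(f),c_2(f)\rp$, and the counts of planes by height and first successive minimum are obtained from Minkowski's theorem and adelic Riemann--Roch (\cref{lkheight}, \cref{Scounting}, \cref{Tcounting}). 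Your proposal contains no substitute for this step, and it is the heart of the proof.

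Your treatment of cubefree $f=ts^2$ also runs backwards relative to what is feasible. The weighted sum you describe requires as input the exact squarefree proportions $\mu'_{\epsilon',g_t}(a)$ for all smaller genera, but these are not available in closed form: the paper only derives bounds for them (\cref{boundedvalues}) \emph{as a consequence of} the cubefree result, with exact values only in special cases (\cref{actualvalues}), so your assembly is essentially circular. The paper sidesteps this by proving the twisted Elkin formula \cref{elkin2} and \cref{collins}, which show that for cubefree $f=f_1f_2^2$ the relevant kernel is still computed directly from the $c_i(f)$ of the full polynomial with the same degree bounds, and by \cref{gcd_cf}, which characterizes cubefreeness as $\gcd\lp c_0(f),c_1(f),c_2(f)\rp=1$; all cubefree $f$ of degree $2g+\epsilon$ are then handled in one stroke. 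Two smaller points: the pinned block is $\rho^\ast\equiv\epsilon-g\pmod 3$ (the residue of $\deg f=2g+\epsilon$), not $g-\epsilon$, although the regime dichotomy you state happens to be correct; and the thresholds $[(g+1)/3]$ and $[(g-1)/3]+1$ do not follow merely from column counts of unpinned blocks but from the inequality $\mu_1(V)\le h(V)/2$ of \cref{mink} combined with the degree bounds of \cref{subsum}.
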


In some cases \cref{bigone} immediately implies exact probabilities for the $a$-numbers of squarefree polynomials.

\begin{cor}\label{actualvalues}
Suppose $p=3$ and $g,a\in\Z_{\geq0}$.
Write $\lceil\cdot\rceil$ for the least integer function.
\begin{enumerate}
\item\label{squarefreevanishing}
If $a>\lceil\frac{g}{3}\rceil$, then
\[
\mu_{1,g}'(a)
=\mu_{2,g}'(a)
=0.
\]
\item\label{topanumber}
If $g\equiv1\pmod{3}$ and $a=\lceil\frac{g}{3}\rceil$, then
\[
\mu_{1,g}'(a)
=q^{-2a+1}
\hspace{20px}\text{and}\hspace{20px}
\mu_{2,g}'(a)
=q^{-2a+1}\lp1+q^{-1}\rp.
\]
\end{enumerate}
\end{cor}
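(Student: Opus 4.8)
The plan is to read both statements off from \cref{bigone} by comparing the squarefree counts with the cubefree counts through the elementary decomposition of a cubefree polynomial. Any monic cubefree $f$ factors uniquely as $f=st^2$ with $s,t$ monic, $s$ squarefree, and $\gcd(s,t)=1$, where $s$ is the product of the primes dividing $f$ exactly once and $t$ the product of those dividing it exactly twice. Over a field of characteristic $\neq 2$ the substitution $Y\mapsto tY$ identifies the smooth models of $H_f$ and $H_s$, so $a(H_f)=a(H_s)$. Since $\deg s=\deg f-2\deg t$ has the same parity as $\deg f$, an $f\in\cPe(g)$ has $s\in\cPe'(g-\deg t)$: the squarefree part keeps $\epsilon$ fixed and lowers the genus by exactly $\deg t$. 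This yields the bookkeeping identity
\[
|\{f\in\cPe(g): a(H_f)=a\}|=\sum_{e\ge 0}\#\{(s,t): s\in\cPe'(g-e),\ a(H_s)=a,\ t\ \text{squarefree},\ \deg t=e,\ \gcd(s,t)=1\},
\]
which is the engine for everything that follows.

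The first assertion is then immediate. Because $\cPe'(g)\subseteq\cPe(g)$ and the two parameterizations attach the same curve to a squarefree $f$, restricting to squarefree polynomials can only shrink the relevant set, so $|\{f\in\cPe'(g):a(H_f)=a\}|\le |\{f\in\cPe(g):a(H_f)=a\}|$. A short inspection of the four regimes of \cref{bigone} shows that the largest $a$ for which $\mu_{\epsilon,g}(a)\neq 0$ is exactly $\lceil g/3\rceil$ in every case; hence the cubefree count, and therefore the squarefree count, vanishes once $a>\lceil g/3\rceil$, giving $\mu'_{1,g}(a)=\mu'_{2,g}(a)=0$.

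For the second assertion I will show that the maximal $a$-number is attained only by squarefree polynomials. Fix $g\equiv 1\pmod 3$ and $a=\lceil g/3\rceil=[(g-1)/3]+1$. In the identity above, any term with $e=\deg t\ge 1$ produces a squarefree part $s$ of genus $g-e\le g-1$, whose $a$-number is at most $\lceil(g-e)/3\rceil\le\lceil(g-1)/3\rceil=\lceil g/3\rceil-1<a$ by the first assertion applied in genus $g-e$; such terms vanish, leaving only $e=0$. Consequently $|\{f\in\cPe(g):a(H_f)=a\}|=|\{f\in\cPe'(g):a(H_f)=a\}|$, and dividing both sides by $|\cPe'(g)|$ gives
\[
\mu'_{\epsilon,g}(a)=\mu_{\epsilon,g}(a)\cdot\frac{|\cPe(g)|}{|\cPe'(g)|}.
\]
It then remains to substitute the top-stratum value of $\mu_{\epsilon,g}(a)$ from \cref{bigone} and the counts $|\cPe(g)|$, $|\cPe'(g)|$, which follow from the generating functions $\frac{1-qu^3}{1-qu}$ and $\frac{1-qu^2}{1-qu}$ for cubefree and squarefree monic polynomials respectively.

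The step I expect to be most delicate, and the one that separates the answers for $\epsilon=1$ and $\epsilon=2$, is this final extraction of constants. Both the ratio $|\cPe(g)|/|\cPe'(g)|$ and the top value $\mu_{\epsilon,g}(a)$ carry factors of the shape $1\pm q^{-1}$, and the even-degree case $\epsilon=2$ demands extra care since the smooth model of $H_f$ then has two points at infinity; keeping exact track of how these factors cancel is precisely what produces the asymmetry between the two displayed formulas. Before asserting the constants in general I would check them against the small-genus boundary case $g=1$, where \cref{bigone} supplies its separate $g\le 2$ formula, as a safeguard on the bookkeeping.
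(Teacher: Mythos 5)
Your outline reproduces the paper's own argument. Part (1) is obtained exactly as you say: squarefree polynomials sit inside the cubefree ones, and \cref{bigone} already gives $\mu_{\epsilon,g}(a)=0$ for $a>\lceil g/3\rceil$ in every congruence class. For part (2) the paper likewise writes $f=f_1f_2^2$, notes that $\deg f_2\ge 1$ lowers the genus of the squarefree part and hence (because $g\equiv 1\pmod 3$, so $\lceil g/3\rceil>\lceil (g-1)/3\rceil$) forces $a(H_f)<\lceil g/3\rceil$, and concludes that the cubefree and squarefree counts at the top $a$-number coincide, so that
\[
\mu'_{\epsilon,g}(a)=\mu_{\epsilon,g}(a)\cdot\frac{\lv\cPe(g)\rv}{\lv\cPe'(g)\rv}=\lp 1+q^{-1}\rp\mu_{\epsilon,g}(a).
\]
Up to this point your reduction is correct and identical to the paper's.

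The step you deferred, however, cannot produce the $\epsilon$-asymmetry you are anticipating, and the ``two points at infinity'' worry is a red herring: all of that is already packaged inside \cref{bigone}. When $g\equiv 1\pmod 3$ both $\epsilon=1$ and $\epsilon=2$ land in the \emph{same} branch of \cref{bigone}, which assigns the identical top value $q^{-2a+1}/(1+q^{-1})$ at $a=[(g-1)/3]+1=\lceil g/3\rceil$ (and $q^{-1}(1-q^{-1})/(1-q^{-2})$ when $g=1$), while the ratio $\lv\cPe(g)\rv/\lv\cPe'(g)\rv=(1-q^{-2})/(1-q^{-1})=1+q^{-1}$ is also independent of $\epsilon$. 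Hence this route yields $\mu'_{1,g}(a)=\mu'_{2,g}(a)=q^{-2a+1}$, and no amount of bookkeeping within it can generate the extra factor $1+q^{-1}$ displayed for $\epsilon=2$. A direct check at $g=1$, $q=3$ settles which side is right: for a quartic $f$, $a(H_f)=1$ exactly when the $X^2$-coefficient of $f$ vanishes, and one counts $18$ squarefree such $f$ out of $54$, i.e.\ $\mu'_{2,1}(1)=q^{-1}=q^{-2a+1}$, not $q^{-2a+1}(1+q^{-1})$. So your method is sound and matches the paper's proof; the discrepancy you would run into at the last step lies between that proof and the stated formula for $\epsilon=2$, not in your argument.
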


\cref{actualvalues} has the following application.

\begin{cor}\label{biganums}
Suppose $p=3$, $\epsilon\in\lb1,2\rb$, and $g,a,r$ are nonnegative integers.
\begin{enumerate}
\item
If $f\in\mathcal{P}'_{q,\epsilon}(g)$ and $a(f)=g-r$, then
\[
g\leq\frac{3r}{2}+1.
\]
\item
If $r\equiv0\pmod{2}$ and $g\equiv1\pmod{3}$, then there exists $f\in\mathcal{P}'_{q,\epsilon}(g)$ with $a(f)=g-r$.
\end{enumerate}
\end{cor}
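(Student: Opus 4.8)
The plan is to obtain both parts as immediate consequences of \cref{actualvalues}, using the elementary equivalence that $\mu'_{\epsilon,g}(a)>0$ if and only if the counting set $\{f\in\cPe'(g)\mid a(H_f)=a\}$ in its numerator is nonempty (the denominator $|\cPe'(g)|$ being finite and positive for the relevant $g\geq1$). For part~(1), suppose $f\in\cPe'(g)$ has $a(H_f)=g-r$. Then this $f$ lies in the defining set for $\mu'_{\epsilon,g}(g-r)$, so $\mu'_{\epsilon,g}(g-r)>0$, and the contrapositive of the vanishing part of \cref{actualvalues} forces $g-r\leq\lceil g/3\rceil$. First I would record the uniform bound $\lceil g/3\rceil=\lfloor(g+2)/3\rfloor\leq(g+2)/3$, valid for every integer $g$. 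Substituting this into $g-r\leq\lceil g/3\rceil$ and clearing denominators turns $3(g-r)\leq g+2$ into $2g\leq 3r+2$, i.e.\ $g\leq\frac{3r}{2}+1$, as claimed. This avoids a residue-by-residue analysis modulo $3$ while still giving the sharp constant.

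For part~(2), the key observation is that the hypotheses ``$r$ even and $g\equiv1\pmod 3$'' pick out exactly the extremal regime of part~(1), where $g-r$ equals the largest $a$-number permitted for squarefree $f$. Concretely, when $g\equiv1\pmod3$ one has $\lceil g/3\rceil=(g+2)/3$; setting the target $a=g-r$ equal to this maximum gives $r=g-\tfrac{g+2}{3}=\tfrac{2}{3}(g-1)$, and since $3\mid g-1$ this $r$ is a genuine even integer (with $g=\frac{3r}{2}+1$). I would then invoke the top-$a$-number part of \cref{actualvalues}, which gives $\mu'_{1,g}(a)=q^{-2a+1}$ and $\mu'_{2,g}(a)=q^{-2a+1}(1+q^{-1})$ at precisely this value of $a$. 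Because $q\geq3$, both quantities are strictly positive, so the corresponding counting set is nonempty, and any member of it is the desired $f\in\cPe'(g)$ with $a(H_f)=g-r$.

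I do not anticipate a genuine obstacle: once \cref{actualvalues} is available, both parts are two-line deductions, and the only points demanding care are bookkeeping. For part~(1) this is the ceiling-function estimate $\lceil g/3\rceil\leq(g+2)/3$, which I would justify via the identity $\lceil g/3\rceil=\lfloor(g+2)/3\rfloor$. For part~(2) the subtlety is simply to confirm that the congruence hypotheses are self-consistent rather than vacuous, i.e.\ that $r=\tfrac23(g-1)$ is indeed even so that we truly land in the case covered by the positivity statement; this is where the ``$r$ even'' hypothesis is used. The passage from a positive proportion to a nonempty set is immediate from the definition of $\mu'_{\epsilon,g}$, so no further input is needed.
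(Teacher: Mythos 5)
Your proposal is correct and takes the same route the paper intends: the paper gives no separate proof of \cref{biganums}, presenting it as an immediate application of \cref{actualvalues}, and your deduction (the vanishing statement plus the uniform bound $\lceil g/3\rceil\le(g+2)/3$ for part (1), the positivity of $\mu'_{\epsilon,g}(\lceil g/3\rceil)$ for part (2)) is exactly that. Your reading of part (2) as the extremal case $g=\tfrac{3r}{2}+1$ (equivalently $r=\tfrac{2}{3}(g-1)$) is the intended one, matching the introduction's claim that the bound of part (1) ``cannot be improved''; the literal statement for arbitrary even $r$ would contradict part (1).
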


More generally, \cref{bigone} implies bounds on probabilities for the $a$-numbers of squarefree polynomials for all $a\in\Z_{\geq0}$.

\begin{cor}\label{boundedvalues}
Suppose $p=3$, $\epsilon\in\lb1,2\rb$ and $g,a$ are nonnegative integers.
\begin{enumerate}
\item\label{corpart1}
Then
\[
\mu_{\epsilon,g}(a)\lp1+q^{-1}\rp-q^{-1}
\leq\mu_{\epsilon,g}'(a)
\leq\mu_{\epsilon,g}(a)\lp1+q^{-1}\rp.
\]
\item\label{corpart2}
If $a>0$, then
\[
\lv\mu_{\epsilon,g}'(a)-q^{-2a+1}\rv<2q^{-2a}.
\]
\end{enumerate}
\end{cor}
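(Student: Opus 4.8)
The plan is to build a bridge between the squarefree and cubefree families using the unique factorization of a cubefree polynomial. Every monic cubefree $f$ factors uniquely as $f=st^2$ with $s,t$ monic, squarefree, and coprime; since the substitution $Y\mapsto Y/t$ exhibits $Y^2=st^2$ as birational to $Y^2=s$, the two share a smooth model and hence $a(H_f)=a(H_s)$. Writing $N_{\epsilon,g}(a)$ and $N'_{\epsilon,g}(a)$ for the numerators defining $\mu_{\epsilon,g}(a)$ and $\mu'_{\epsilon,g}(a)$, and recording the standard counts $\lv\cPe(g)\rv=q^{2g+\epsilon}\lp1-q^{-2}\rp$ and $\lv\cPe'(g)\rv=q^{2g+\epsilon}\lp1-q^{-1}\rp$ for $g\geq1$ (so that $\lv\cPe(g)\rv/\lv\cPe'(g)\rv=1+q^{-1}$; the case $g=0$ is immediate from \cref{bigone}), everything reduces to comparing $N'_{\epsilon,g}(a)$ with $N_{\epsilon,g}(a)$.

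For part (1) I would argue purely by inclusion. A squarefree polynomial is cubefree with the same $a$-number, so $\lb f\in\cPe'(g)\mid a(H_f)=a\rb\subseteq\lb f\in\cPe(g)\mid a(H_f)=a\rb$, giving $N'_{\epsilon,g}(a)\leq N_{\epsilon,g}(a)$ and hence $\mu'_{\epsilon,g}(a)\leq N_{\epsilon,g}(a)/\lv\cPe'(g)\rv=\mu_{\epsilon,g}(a)\lp1+q^{-1}\rp$. For the lower bound, the polynomials counted by $N_{\epsilon,g}(a)-N'_{\epsilon,g}(a)$ are cubefree and not squarefree, so their number is at most $\lv\cPe(g)\rv-\lv\cPe'(g)\rv=q^{2g+\epsilon-1}\lp1-q^{-1}\rp$; dividing by $\lv\cPe'(g)\rv$ and rearranging yields $\mu'_{\epsilon,g}(a)\geq\mu_{\epsilon,g}(a)\lp1+q^{-1}\rp-q^{-1}$.

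For part (2) the upper estimate is immediate: \cref{bigone} shows $\mu_{\epsilon,g}(a)\leq q^{-2a+1}$ for every $a>0$, so part (1) gives $\mu'_{\epsilon,g}(a)\leq q^{-2a+1}\lp1+q^{-1}\rp$ and thus $\mu'_{\epsilon,g}(a)-q^{-2a+1}\leq q^{-2a}$. The lower estimate is where the real work lies. Refining part (1), I would expand $N_{\epsilon,g}(a)-N'_{\epsilon,g}(a)$ over the square factor $t^2$, grouping by $d=\deg t\geq1$; bounding the number of squarefree $t$ of degree $d$ by $q^d$ and dropping the coprimality condition gives
\[
\frac{N_{\epsilon,g}(a)-N'_{\epsilon,g}(a)}{\lv\cPe'(g)\rv}\leq\sum_{d\geq1}q^{-d}\,\mu'_{\epsilon,g-d}(a).
\]
The crucial input is the vanishing in \cref{actualvalues}: $\mu'_{\epsilon,g-d}(a)=0$ whenever $a>\lceil(g-d)/3\rceil$, so only finitely many $d$ survive, and near the top of the allowed range of $a$ often none do. Bounding each surviving term by $q^{-2a+1}\lp1+q^{-1}\rp$ via part (1) and combining with the exact value of $\mu_{\epsilon,g}(a)\lp1+q^{-1}\rp$ from \cref{bigone}, the identity $\mu'_{\epsilon,g}(a)=\mu_{\epsilon,g}(a)\lp1+q^{-1}\rp-\lp N_{\epsilon,g}(a)-N'_{\epsilon,g}(a)\rp/\lv\cPe'(g)\rv$ should deliver $\mu'_{\epsilon,g}(a)>q^{-2a+1}-2q^{-2a}$.

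The hard part will be the lower bound in part (2), because the crude tail bound above is, at $q=3$, essentially equal to the target $2q^{-2a}$ rather than strictly below it. Getting strict inequality will force a split on the location of $a$: for $a$ in the generic middle range the surplus $\mu_{\epsilon,g}(a)\lp1+q^{-1}\rp-q^{-2a+1}=q^{-2a}\lp1-q^{-1}-q^{-2}\rp$, coming from the factor $1-q^{-2}$ in \cref{bigone}, absorbs the tail; whereas for the top one or two values of $a$ the surplus vanishes but the vanishing in \cref{actualvalues} collapses the tail (indeed at the very top value one expects $N_{\epsilon,g}(a)=N'_{\epsilon,g}(a)$, matching the exact values in \cref{actualvalues}), and the retained coprimality savings make the remaining inequality strict. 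I would also note at the outset that the two-sided estimate is meaningful only in the range $1\leq a\leq\lceil g/3\rceil$ where the relevant stratum is nonempty; for $g=0$, or for $a$ beyond this range, $\mu'_{\epsilon,g}(a)=0$ and the estimate is read vacuously.
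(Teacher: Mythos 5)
Your proposal reproduces the paper's proof essentially step for step: part (1) via the same inclusion/complement count against $\lv\cPe(g)\rv-\lv\cPe'(g)\rv$, part (2)'s upper bound from the bound $\mu_{\epsilon,g}(a)\le q^{-2a+1}$ of \cref{bigone}, and the lower bound by the same decomposition of the non-squarefree locus over the degree of the square factor with each term bounded by $q^{-2a+1}\lp1+q^{-1}\rp$. The only divergence is at the final strictness step you flag as ``the hard part'': the paper needs no case split on $a$, since the geometric sum is finite (only $1\le d\le g-1$ contribute, as $\mu'_{\epsilon,0}(a)=0$), so the tail is already strictly below $2q^{-2a}$ even at $q=3$.
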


Previously the authors of~\cite{CEZB} introduced a heuristic prediction for the values of 
$\lim_{g\to\infty}\mu '_{1,g}(a)$ via a model 
of ``random $p$-divisible groups''.
Upon performing numerical computations they made the intriguing remark that the statistics of the $a$-numbers of random $p$-divisible 
groups appear to differ from those of hyperelliptic curves, especially when $p=3$.
Specifically they proved that the limiting probability of random $3$-divisible groups with $a$-number 0 
is $\prod_{i=1}^\infty (1+q^{-i})^{-1}$, while 
on the other hand their computations suggest that when $p=3$ the quantities 
$\mu'_{1,g}(a)$ might tend to $1-q^{-1}$ as $g$ increases.
Thus, one consequence of \cref{bigone} is that by broadening the set of  hyperelliptic curves considered, the corresponding probabilities 
suggested by the $p=3$ data of~\cite{CEZB} are true not just in the $g$ limit, but for every $g\geq3$.

In \cref{heuristicmodel} we introduce an alternative model for studying $a$-numbers of hyperelliptic curves (``random subspaces of fixed height'').
\cref{heurcomp} shows that the statistics of this model match that of the data computed by~\cite{CEZB}.
This theorem emboldens us to make \cref{wemadeaconj}, which predicts that for $p=3$, $\epsilon\in\lb1,2\rb$, and $a\in\Z_{\geq0}$,
\[
\lim_{g\to\infty}{\mu_{\epsilon,g} '(a)}=
\begin{cases}
1-q^{-1}&\text{if }a=0\\
q^{-2a+1}\lp1-q^{-2}\rp&\text{if }a>0.
\end{cases}
\]
This conjecture has the benefit of encompassing all $a\in\Z_{\geq0}$.
(When $a>0$, the data of \cite{CEZB} were not conclusive enough to suggest a limiting probability.)


Finally, in \cref{imoduli}, we apply these estimates of $\mu_{\epsilon,g} '(a)$ to compute the codimensions of the 
$a$-number strata of the moduli space of smooth hyperelliptic curves of genus $g$ over 
$\overline{\FF_q}$ in characteristic 3.
In particular, we prove in \cref{thm:strata} that when $a$ is positive these codimensions are $2a-1$.
By comparison, in \cite{O} the author showed that the codimensions of the $a$-number strata in the 
full moduli space of abelian varieties are $a(a+1)/2$.
Interestingly, this discrepancy contrasts $p$-rank strata of hyperelliptic curves; the results of \cite[Theorem~1]{GP} proves that the codimensions of the 
$p$-rank strata of hyperelliptic curves and those of abelian varieties are the same.

\subsection{Previous work}\label{prevv}

Many previous investigations into heuristics for hyperelliptic curves build upon work of 
Friedman and Washington~\cite{FW} who 
compute statistics of invertible matrices over finite fields and compare these statistics to those 
of the $\ell$-parts of Jacobians 
of hyperelliptic curves, where $\ell$ is a prime number satisfying $\ell\nmid2p$.
This endeavor is analogous to the Cohen-Lenstra heuristics~\cite{CL}, where the authors compute 
statistics of random finite abelian 
groups and compare them to those of ideal class groups of quadratic number fields.
Moreover, the heuristics of Friedman and Washington~\cite{FW} turn out to be provably correct 
as $q\to\infty$ with $q\not\equiv0\pmod\ell$, as shown in~\cite{EVW}.
In~\cite{G} the author addresses the case where $q\not\equiv 1\pmod{\ell}$, leaving open the case when $p = \ell$.
The heuristic investigation of this case begins with~\cite{CEZB}, as detailed above.

Other previous results on the $a$-numbers of hyperelliptic curves include the following.
Proposition~3.1 of~\cite{Re} is an upper bound on the $a$-numbers of any smooth, complete, irreducible 
curve in terms its genus.
Theorem~1.1 of~\cite{E} improves this theorem, restricting to Kummer covers of the projective line.
For hyperelliptic curves over finite fields of characteristic three, \hyperref[squarefreevanishing]{\cref*{actualvalues} (\ref*{squarefreevanishing})} improves upon both these bounds.
Moreover, \hyperref[topanumber]{\cref*{actualvalues}~(\ref*{topanumber})} implies that 
\hyperref[squarefreevanishing]{\cref*{actualvalues} (\ref*{squarefreevanishing})} is sharp when $g\equiv1\pmod{3}$ 
by giving explicit (positive) values of $\mu_{1,g} '(a)$ and $\mu_{2,g} '(a)$ in this case.

Another common means of addressing $a$-numbers is to ask: for which genera $g$ do there exist hyperelliptic curves 
of $a$-number $g-1$?
We mention the special cases of several results. If a hyperelliptic curve of genus $g$ over a finite field of 
characteristic three has $a$-number $g-1$, then:
\begin{itemize}
\item
\cite[Proposition~3.1]{Re} implies that $g\leq6$;
\item
\cite[Corollary~1.2]{E} implies that $g\leq4$; and
\item
\cite[Theorem~1.1]{Frei} implies that $g\leq2$.
\end{itemize}
As mentioned in the previous subsection, we show in \cref{biganums} that if $r\in\Z_{\geq0}$ and a hyperelliptic curve of genus $g$ over a finite 
field of characteristic three has $a$-number $g-r$, then
\[
g\leq\frac{3r}{2}+1.
\]
In particular, we recover Frei's result when $r=1$.
Moreover, when $r$ is even and $g\equiv1\pmod{3}$, we show that this bound cannot be improved.

Finally, \cite{Sankar} studies the statistics of $a$-numbers in the families of superelliptic curves 
in characteristic two and Artin-Schreier curves in any positive characteristic.
Interestingly, in odd characteristic the limiting probability that an Artin-Schreier curve has 
$a$-number 0 (as the genus of the curves grow) is zero, in contrast to what we conjecture to be the 
case for hyperelliptic curves in \cref{wemadeaconj}.

\subsection{Organization}\label{orgg}

This paper is organized as follows.
In \cref{heightheuristic} we review $a$-numbers and the Cartier operator; some of the
results in this section 
hold when $p$ is odd though we
restrict to the case $p=3$ to simplify the exposition.
In \cref{thunder} we use geometry of numbers for function fields to prove our fundamental height results; 
the results in this section hold for any prime $p$.
In \cref{consequences} we apply these results to deduce our main theorems. As noted, these results 
are only for the case $p=3$. 

\subsection{Competing interests declaration}\label{ideclare}

The authors declare no competing interests.

\section{Preliminary results} \label{heightheuristic}

In this section we will assume that $p=3$ (though much of it is true in the more general case where $p$ is
odd). Everything we do will revolve around particular 
polynomials attached to a given polynomial $f(X)\in\Fq [X]$.
Specifically, for any $f\in\Fq[X]$ let $c_0(f),c_1(f),$ and $c_2(f)$ be the unique polynomials in 
$\Fq[X]$ such that
$$
f(X)
=c_0(f)^3+c_1(f)^3\cdot X+c_2(f)^3\cdot X^2.
$$

In this section we first restrict to the case where $f$ is squarefree. We then recall the definition of the 
Cartier operator and see how the
$a$-number $a(H_f)$ may be computed via some linear algebra involving the coefficients $c_i(f)$. 
We then extend this to the case where $f$ is simply cubefree 
(this of course matches the previous when 
$f$ happens to be squarefree).
Finally, we recall relevant facts from the theory of heights in Diophantine geometry and show how 
computing the $a$-number may be reformulated in those terms.

In what follows we let $K$ be a fixed algebraic closure of $\Fq$.

\subsection{The Cartier operator and \textit{a}-numbers} \label{cartier}

Let $f\in\Fq[X]$ be a monic and squarefree polynomial. We denote the $K$-module of meromorphic differentials on $H_f$ defined over $K$ by
$\Omega _f^1$.

\begin{defn}\label{cartierdef}
For any monic squarefree polynomial $f\in\Fq[X]$, the \emph{Cartier operator} of $f$ is the (unique) map
\[
\mathcal{C}_f\colon\Omega_f^1\to\Omega_f^1
\]
that satisfies the following properties:
for any  $\omega,\omega_1,\omega_2\in\Omega^1_f$ and $z\in\mathcal{O}_{H_f}$,
\begin{itemize}
\item $\mathcal{C}_f\lp\omega_1+\omega_2\rp=\mathcal{C}_f\lp\omega_1\rp+\mathcal{C}_f\lp\omega_2\rp$;
\item$\mathcal{C}_f\lp z^p\omega\rp=z\cdot\mathcal{C}_f\lp\omega\rp$;
\item$\mathcal{C}_f\lp\der{z}\rp=0$; and
\item$\mathcal{C}_f\lp\frac{\der{z}}{z}\rp=\frac{\der{z}}{z}$.
\end{itemize}
\end{defn}

Set $\omega=\frac{\der{X}}{Y}\in\Omega^1_{f}$.
Then for any $j\in\Z_{\geq0}$ Elkin~\cite[Theorem~3.4]{E} proves that
\[ \numberthis\label{elkin1} 
\mathcal{C}_{f}\lp X^j\omega\rp =
\begin{cases}
X^jc_2(f)\omega &\text{if $j\equiv 0\mod 3$},\\
X^{j-1}c_1(f)\omega &\text{if $j\equiv 1\mod 3$},\\
X^{j-2}c_0(f)\omega &\text{if $j\equiv 2\mod 3$}.
\end{cases}
\]
Let $g$ denote the genus of the curve $H_f$, i.e., $\deg (f)=2g+\epsilon$ for $\epsilon = 1,\ 2$. Then by \cref{elkin1} 
we see that the Cartier operator yields a linear transformation of the $\Fq$-subspace of $\Omega _f^1$ spanned by $\{ \omega ,\ X\omega ,\ldots ,
X^{g-1}\omega\}$.
Section~5 of~\cite{O} shows that the $a$ number $a(H_f)$ is the dimension of the kernel of this transformation.

Now let us consider the case where
$f$ is possibly just cubefree. In this case we may uniquely write $f=f_1f_2^2$ where $f_1$ and $f_2$ are 
relatively prime and both are monic and squarefree.
(Of course, the case where $f_2=1$ is where $f$ is squarefree.)
Here
$f=
f_1f_2^3/f_2$. With this in mind,
an entirely similar proof as given by Elkin shows that \cref{elkin1} generalizes to
\[
\mathcal{C}_{f_1}\lp X^j\omega\rp =
\begin{cases}
X^jc_2(f)\omega &\text{if $j\equiv 0\mod 3$},\\
X^{j-1}c_1(f)\omega &\text{if $j\equiv 1\mod 3$},\\
X^{j-2}c_0(f)\omega &\text{if $j\equiv 2\mod 3$},
\end{cases}
\numberthis\label{elkin2}
\]
where now 
$\omega=\frac{\der{X}}{Yf_2}\in\Omega^1_{f_1}$. 
Set $d=\deg (f_2)$ and say $f=f_1f_2^2\in\cP _1(g)\cup\cP _2(g)$, so that the genus of $H_f$, which is the genus of $H_{f_1}$, is $g-d$. Of course 
$a(H_f)=a(H_{f_1})$, too. We will show the following.

\begin{lemma}\label{collins}
Let $f=f_1f_2^2\in \cP _1(g)\cup\cP _2(g)$ with $f_1$ and $f_2$ monic, squarefree, and relatively prime. 
Then $a(H_f)=0$ whenever $\deg (f)<3$ (i.e., when $g=0$).
If $\deg{f}\ge 3$ then \cref{elkin2} shows that $\mathcal{C}_{f_1}$ 
restricts to an action on the $\Fq$-subspace of $\Omega^1_{f_1}$ spanned by $\lb\omega,X\omega,\ldots,X^{g-1}\omega\rb$. The $a$-number $a(H_f)$ is 
the dimension of the kernel of this action.
\end{lemma}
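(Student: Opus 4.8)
The plan is to reduce the cubefree case to Elkin's already-established squarefree computation by exploiting the identity $a(H_f)=a(H_{f_1})$. Since $f=f_1f_2^2$ with $f_1$ squarefree, the smooth model $H_f$ is the smooth model of $Y^2=f_1(X)$ (the square factor $f_2^2$ contributes nothing to the function field up to the birational change $Y\mapsto Y/f_2$), so the Jacobians and hence the $a$-numbers agree. The genus of $H_{f_1}$ is then $g-d$ where $d=\deg(f_2)$, since $\deg(f_1)=\deg(f)-2d=2(g-d)+\epsilon$.

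\begin{proof}
First suppose $\deg(f)<3$, so that $g=0$. Then $H_f$ has genus $0$, and since $0\le a(H_f)\le g$ we immediately conclude $a(H_f)=0$.

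Now suppose $\deg(f)\ge3$. Writing $f=f_1f_2^2$ with $f_1,f_2$ monic, squarefree, and coprime, and setting $d=\deg(f_2)$, the substitution $Y\mapsto Yf_2$ exhibits an isomorphism of function fields $\Fq(H_f)\cong\Fq(H_{f_1})$, where $H_{f_1}$ is the smooth hyperelliptic curve with affine model $Y^2=f_1(X)$. Consequently $\Jac(H_f)\cong\Jac(H_{f_1})$ as principally polarized abelian varieties, so their $p$-torsion group schemes coincide and hence $a(H_f)=a(H_{f_1})$. Because $\deg(f_1)=2g+\epsilon-2d=2(g-d)+\epsilon$, the curve $H_{f_1}$ has genus $g-d$.

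Next I apply the squarefree theory of \cref{cartier} to $f_1$. Setting $\omega=\der{X}/(Yf_2)\in\Omega^1_{f_1}$, \cref{elkin2} shows that the Cartier operator $\mathcal{C}_{f_1}$ acts on $X^j\omega$ by multiplying by the appropriate coefficient $c_i(f)$ and shifting the power of $X$ down to the nearest multiple of $3$, exactly as in the squarefree case. In particular $\mathcal{C}_{f_1}$ maps the span of $\lb\omega,X\omega,\ldots,X^{g-d-1}\omega\rb$ into itself, and by the discussion following \cref{elkin1} (citing Section~5 of~\cite{O}) the $a$-number $a(H_{f_1})$ equals the dimension of the kernel of this restriction. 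Since $a(H_f)=a(H_{f_1})$, this establishes the lemma for the space spanned by $\lb\omega,X\omega,\ldots,X^{g-d-1}\omega\rb$.

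The only subtlety is that the lemma as stated asserts the action is on the larger span of $\lb\omega,X\omega,\ldots,X^{g-1}\omega\rb$ rather than $\lb\omega,\ldots,X^{g-d-1}\omega\rb$. The point I would make here is that \cref{elkin2} applies verbatim to $X^j\omega$ for all $j$ in the range $0\le j\le g-1$: for each such $j$ the operator sends $X^j\omega$ to a scalar multiple of $X^{j'}\omega$ with $j'\le j$ and $j'$ a multiple of $3$ (specifically $j'\in\{j,j-1,j-2\}$), so the span of $\lb\omega,X\omega,\ldots,X^{g-1}\omega\rb$ is indeed preserved. The main point to verify is therefore that the kernel dimension is unchanged whether one uses the span of height $g-d$ or of height $g$; this follows because the extra basis vectors $X^{g-d}\omega,\ldots,X^{g-1}\omega$ are precisely the differentials that are not holomorphic on $H_{f_1}$, so restricting to the holomorphic subspace of dimension $g-d$ (equivalently, the genuine regular differentials of $H_{f_1}$) recovers the correct $a$-number via the kernel of $\mathcal{C}_{f_1}$. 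This identification of the relevant subspace, rather than any hard computation, is the step requiring the most care, and it is exactly what lets the subsequent height-theoretic reformulation operate on a space of fixed height determined by $g$.
\end{proof}
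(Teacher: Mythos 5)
Your reduction to $H_{f_1}$ and the observation that the relevant span is $g$-dimensional while the genus of $H_{f_1}$ is only $g-d$ are both correct and match the setup of the paper's proof. But the step you yourself flag as ``requiring the most care'' is where the argument breaks down, in two ways. First, the intermediate claim that the kernel of $\mathcal{C}_{f_1}$ restricted to $\Span\lp\lb\omega,X\omega,\ldots,X^{g-d-1}\omega\rb\rp$ (with $\omega=\der{X}/(Yf_2)$) computes $a(H_{f_1})$ is false: that space is not the space of holomorphic differentials of $H_{f_1}$. Every single $X^j\omega$ has poles at the points of $H_{f_1}$ lying over the roots of $f_2$; the holomorphic differentials are the combinations $f_2\cdot X^i\omega=X^i\der{X}/Y$ for $0\le i\le g-d-1$, which are spread across all $g$ basis vectors, so it is likewise not true that $X^{g-d}\omega,\ldots,X^{g-1}\omega$ are ``precisely the non-holomorphic ones.''

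Second, and more seriously, nothing in your argument rules out the kernel of $\mathcal{C}_{f_1}$ on the full $g$-dimensional span being strictly larger than its kernel on the $(g-d)$-dimensional holomorphic subspace: a priori the operator could kill a differential with poles, or send one to a holomorphic differential and thereby create new kernel vectors in combination. The paper closes exactly this gap. Using partial fractions and long division it decomposes the $g$-dimensional span as the direct sum of the $d$-dimensional space spanned by the $\der{\lp X-\alpha_i\rp}/\lp Y\lp X-\alpha_i\rp\rp$ and the $(g-d)$-dimensional holomorphic space; it then invokes the fact (Exercise~4.14 of \cite{Stich}) that the Cartier operator preserves holomorphy and preserves simple poles, so that it acts injectively on the polar summand and sends it to differentials that still have poles. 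Hence any kernel element has zero polar part, and the kernel coincides with the kernel on the holomorphic subspace, where Oda's identification with $a(H_{f_1})$ applies. You need some version of this pole-preservation argument (or an equivalent); without it the lemma is not proved.
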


\begin{example}
Let $f=X^3-X\in\mathcal{P}'_{1}(1)$.
Since
\[
f=(X)^3 + (-1)^3X
\]
we see that $c_0(f)=X$, $c_1(f)=-1$, and $c_2(f)=0$.
We set $\omega=\der{X}/Y$ and apply \cref{elkin1} to compute
\[
\mathcal{C}_f(\omega)= c_2(f)\omega = 0.
\]
By \cref{collins} we conclude that the $a$-number $a(H_f)=1$.
\end{example}

\begin{example}
Let
\begin{align*}
f&=f_1f_2^2=\lp X^3-X\rp\lp X^2+X+2\rp^2\\
&=X^7+2X^6+X^5+2X^4+2X^3+2X^2+2X\\
&=\lp 2X^2+2X\rp^3+\lp X^2+2X+2\rp^3X+(X+2)X^2,
\end{align*}
so that $f\in\mathcal{P}_{1}(3)$, and
\begin{align*}
c_0(f)&=2X^2+2X,\\
c_1(f)&=X^2+2X+2\text{, and }\\
c_2(f)&=X+2.
\end{align*}
As the squarefree part of $f$ is $f_1=X^3-X$ we set $\omega =\der{X}/Y\lp X^2+X+2\rp$ and use \cref{elkin2} to compute
\begin{align*}
\mathcal{C}_{f_1}\lp\omega\rp&=\lp X+2\rp\omega,\\
\mathcal{C}_{f_1}\lp X\omega\rp&=\lp X^2+2X+2\rp\omega\text{, and }\\
\mathcal{C}_{f_1}\lp X^2\omega\rp&=\lp2X^2+2X\rp\omega .
\end{align*}
Hence the image of $\lb\omega ,X\omega ,X^2\omega \rb$ under $\mathcal{C}_{f_1}$ has dimension 2, so that $a(H_f)=a(H_{f_1})=3-2=1$.
\end{example}

\begin{proof}
If $\deg{f}<3$, i.e., the genus of $H_f$ is zero, then the lemma is trivially true. We thus suppose $\deg{f}\ge 3$ so that $g\ge 1$.
If $d=0$ we are in the squarefree case, so that the lemma follows from the work of Oda in \cite{O} as remarked above.
Hence we may assume for the remainder that $d\ge 1$. 

Let $\alpha_1,\ldots,\alpha_d$ be the (distinct) roots of $f_2$ in $K$ and set $\omega=\frac{\der{X}}{Y}\in\Omega^1_{f_1}$ as above.
Via the partial fraction decomposition we see that
\[
\Span_K{\lp\lb\frac{\der{X}}{Yf_2(X)},
\ldots,\frac{X^{d-1}\der{X}}{Yf_2(X)}\rb\rp}
=\Span_K{\lp\lb\frac{\der{\lp X-\alpha_1\rp}}{Y\lp X-\alpha_1\rp},
\ldots,\frac{\der{\lp X-\alpha_d\rp}}{Y\lp X-\alpha_d\rp}\rb\rp}.
\]
For every $i\in\lb1,\ldots,d\rb$ choose $\beta_i\in K$ with $\beta_i^2=f_1\lp\alpha_i\rp$, and let $P_i$ be the point $\lp\alpha_i,\beta_i\rp$ on $H_{f_1}$.
With this notation, we see that for any such $i$,
$\displaystyle{\ord_{P_i}{\lp\frac{\der{\lp X-\alpha_i\rp}}{\lp X-\alpha_i\rp}\rp}=-1}$. Also, 
if $j\in\lb1,\ldots,d\rb\setminus{\lb i\rb}$ then $\displaystyle{\ord_{P_j}{\lp\frac{\der{\lp X-\alpha_i\rp}}{\lp X-\alpha_i\rp}\rp}=0.}$ Finally,
$\ord_{P_i}{\lp\frac{1}{Y}\rp}=0$ since $\gcd{\lp f_1,f_2\rp}=1$.

We now recall
(see Exercise~4.14 of~\cite{Stich}, for example) that if $\omega$ is holomorphic, 
then so is $\mathcal{C}_f(\omega ),$ and that 
if $P$ is a point on $H_f$ and $\ord_P{(\omega)}=-1$, then $\ord_P{\lp\mathcal{C}_f(\omega)\rp}=-1$.
Thus $\mathcal{C}_{f_1}$ acts injectively on $\Span_K{\lp\lb\frac{\der{X}}{Yf_2(X)},
\ldots,\frac{X^{d-1}\der{X}}{Yf_2(X)}\rb\rp}$.
In particular, if $d=g$ then the genus of $H_f$ is zero and so is the $a$-number $a(H_f)$, which is also the dimension of the kernel of our Cartier operator 
linear transformation. 
Thus we now suppose that $d<g$.

Via long division and the partial fraction decomposition we see that
\begin{align*}
&\Span_K{\lp\lb\frac{\der{X}}{Yf_2(X)},
\ldots,\frac{X^{g-1}\der{X}}{Yf_2(X)}\rb\rp}\\
&\hspace{20px}=\Span_K{\lp\lb\frac{\der{X}}{Yf_2(X)},
\ldots,\frac{X^{d-1}\der{X}}{Yf_2(X)}\rb
\bigcup\lb\frac{\der{X}}{Y},
\ldots,\frac{X^{g-d-1}\der{X}}{Y}\rb\rp}\\
&\hspace{20px}=\Span_K{\lp\lb\frac{\der{\lp X-\alpha_1\rp}}{Y\lp X-\alpha_1\rp},
\ldots,\frac{\der{\lp X-\alpha_d\rp}}{Y\lp X-\alpha_d\rp}\rb
\bigcup\lb\omega,X\omega,
\ldots,X^{g-d-1}\omega\rb\rp}.
\end{align*}
As the genus of $H_{f_1}$ is $g-d$, we know that $\omega,X\omega,
\ldots,X^{g-d-1}\omega$ are holomorphic.
Thus, it follows that
\[
\lb0\rb=\Span_K{\lp\lb\frac{\der{\lp X-\alpha_1\rp}}{Y\lp X-\alpha_1\rp},
\ldots,\frac{\der{\lp X-\alpha_d\rp}}{Y\lp X-\alpha_d\rp}\rb\rp}
\bigcap
\Span_K{\lp\lb\omega,X\omega,
\ldots,X^{g-d-1}\omega\rb\rp},
\]
finishing the proof.
\end{proof}

\begin{lemma}\label{Apolys} 
Let $f=f_1f_2^2\in\Fq[X]$ be as above and let
$$Q(X)\omega=\sum _{i=0}^{g-1}a_iX^i\omega$$
be an element of the $\Fq$-subspace of $\Omega _{f_1}^1$ in \cref{collins}.
Then $Q(X)\omega$ is in the kernel of $C_{f_1}$ if and only if
\[
\numberthis\label{fundeq}
c_2(Q)c_0(f)+c_1(Q)c_1(f)+c_0(Q)c_2(f)=0.
\]
\end{lemma}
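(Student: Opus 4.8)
The plan is to compute $\mathcal{C}_{f_1}(Q(X)\omega)$ explicitly using the formula in \cref{elkin2}, expand everything into its cube-root components, and read off the vanishing condition as a single polynomial equation. The key observation is that applying the Cartier operator term-by-term to $\sum_{i=0}^{g-1} a_i X^i \omega$ and grouping by residue class of $i$ modulo $3$ will produce an expression of the form $R(X)\omega$ for some polynomial $R$, and $Q(X)\omega$ lies in the kernel precisely when $R(X)=0$. So I would first write down $\mathcal{C}_{f_1}(Q(X)\omega)$ and simplify.

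First I would use the semilinearity of the Cartier operator (the relation $\mathcal{C}_{f_1}(z^3\omega)=z\,\mathcal{C}_{f_1}(\omega)$) to handle the cube-root decomposition cleanly. Writing $Q(X)=c_0(Q)^3+c_1(Q)^3 X+c_2(Q)^3 X^2$, I would expand $Q(X)\omega$ and apply \cref{elkin2}. The point is that $c_0(Q)^3\omega$ contributes terms whose exponents are $\equiv 0 \pmod 3$ (after multiplying $c_0(Q)^3$ by $1$), the $c_1(Q)^3 X$ piece contributes exponents $\equiv 1\pmod 3$, and the $c_2(Q)^3 X^2$ piece contributes exponents $\equiv 2\pmod 3$. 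By \cref{elkin2}, the $j\equiv 0$ terms pull out a factor of $c_2(f)$, the $j\equiv 1$ terms pull out $c_1(f)$, and the $j\equiv 2$ terms pull out $c_0(f)$; combined with the cube-root-extraction from the semilinearity, this should collapse $\mathcal{C}_{f_1}(Q(X)\omega)$ to exactly $\bigl(c_0(Q)c_2(f)+c_1(Q)c_1(f)+c_2(Q)c_0(f)\bigr)\omega$, which is the left-hand side of \cref{fundeq}.

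The cleanest way to organize this is to observe that for each residue class $r\in\{0,1,2\}$, the subsum $\sum_{i\equiv r} a_i X^i$ is itself a perfect cube times $X^r$, namely $c_r(Q)^3\cdot X^r$ (reindexing appropriately), so that $\mathcal{C}_{f_1}$ acts on it by stripping the cube and multiplying by the appropriate $c_j(f)$. Once I verify that the three residue classes feed into the three cases of \cref{elkin2} in the pattern above, the equality $\mathcal{C}_{f_1}(Q(X)\omega)=\bigl(c_2(Q)c_0(f)+c_1(Q)c_1(f)+c_0(Q)c_2(f)\bigr)\omega$ follows, and since $\omega\neq 0$ the kernel condition is exactly the vanishing of the coefficient, giving \cref{fundeq}.

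\emph{The main obstacle} will be bookkeeping the index shifts correctly: \cref{elkin2} lowers the exponent of $X$ by $0$, $1$, or $2$ depending on the residue, while the cube-root extraction from $c_r(Q)^3$ also shifts degrees, and one must check these interact to produce genuine polynomial coefficients $c_r(Q)$ rather than fractional powers. I would also need to confirm that the pairing is $c_2(Q)$ with $c_0(f)$ (and $c_0(Q)$ with $c_2(f)$) rather than the naive $c_0$-with-$c_0$ pairing — this crossing comes precisely from the fact that $\mathcal{C}_{f_1}$ maps the $j\equiv 2$ terms (governed by $c_2(Q)$) via the factor $c_0(f)$. Verifying this matching carefully against \cref{elkin2} is the delicate point; the rest is a routine linear-algebraic collapse.
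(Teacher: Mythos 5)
Your proposal is correct and follows essentially the same route as the paper: apply \cref{elkin2} to the three residue classes of exponents mod $3$ (equivalently, to the cube-decomposition $Q=c_0(Q)^3+c_1(Q)^3X+c_2(Q)^3X^2$), use additivity and the semilinearity $\mathcal{C}_{f_1}(z^3\omega)=z\,\mathcal{C}_{f_1}(\omega)$ to collapse the image to $\bigl(c_2(Q)c_0(f)+c_1(Q)c_1(f)+c_0(Q)c_2(f)\bigr)\omega$, and conclude since $\omega\neq 0$. The ``crossed'' pairing you flag as the delicate point is exactly the content of the identities $C_{f_1}(X^{3j}\omega)=c_2(f)X^j\omega$, $C_{f_1}(X^{3j+1}\omega)=c_1(f)X^j\omega$, $C_{f_1}(X^{3j+2}\omega)=c_0(f)X^j\omega$ that the paper records before saying ``the lemma follows.''
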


\begin{proof}
Via \cref{elkin2} we readily verify the following:
$$C_{f_1}(X^{3j}\omega )=c_2(f)X^j\omega ,\qquad
C_{f_1}(X^{3j+1}\omega )=c_1(f)X^j\omega ,\qquad
C_{f_1}(X^{3j+2}\omega )=c_0(f)X^j\omega .$$
The lemma follows.
\end{proof}

Combining the two lemmas above yields the main result of this subsection. (We adopt the usual convention here that the
degree of the zero polynomial is $-\infty$.)

\begin{prop}\label{subsum} Let $p=3$ and $g\in\Z _{\ge 0}$. Then for all $f\in\cP _1(g)\cup\cP _2(g)$ 
the $a$-number $a(H_f)$ is the dimension
of the $\Fq$-vector space consisting of those $Q(X)\omega$ where $Q$ satisfies \cref{fundeq}
and 
\[
\deg \big ( c_2(Q)\big )\le \frac{g-3}{3},\qquad
\deg \big ( c_1(Q)\big )\le \frac{g-2}{3},\qquad
\deg \big ( c_0(Q)\big )\le \frac{g-1}{3}.
\]
\end{prop}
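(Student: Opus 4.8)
The plan is to combine the two preceding lemmas and then translate the condition cutting out the relevant subspace of $\Omega_{f_1}^1$ into the three stated degree bounds on the coefficient polynomials $c_i(Q)$. By \cref{collins}, once $\deg f\ge 3$ the $a$-number $a(H_f)$ equals the dimension of the kernel of $\mathcal{C}_{f_1}$ acting on the span of $\{\omega,X\omega,\ldots,X^{g-1}\omega\}$, while for $\deg f<3$ the same lemma gives $a(H_f)=0$. An arbitrary element of that span is precisely $Q(X)\omega$ with $\deg Q\le g-1$, and by \cref{Apolys} such an element lies in the kernel if and only if $Q$ satisfies \cref{fundeq}. So the asserted vector space and the kernel share the defining relation \cref{fundeq}, and it remains only to verify that the constraint $\deg Q\le g-1$ is equivalent to the simultaneous inequalities on $\deg c_0(Q)$, $\deg c_1(Q)$, and $\deg c_2(Q)$.

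To carry out that verification I would return to the defining identity $Q=c_0(Q)^3+c_1(Q)^3\cdot X+c_2(Q)^3\cdot X^2$ in characteristic $3$. Writing $e_i=\deg\big(c_i(Q)\big)$, the three summands have degrees $3e_0$, $3e_1+1$, and $3e_2+2$, which lie in three distinct residue classes modulo $3$. Consequently no cancellation among their leading terms is possible, so $\deg Q=\max\{3e_0,\,3e_1+1,\,3e_2+2\}$. The single inequality $\deg Q\le g-1$ is therefore equivalent to the three inequalities $3e_0\le g-1$, $3e_1+1\le g-1$, and $3e_2+2\le g-1$, which rearrange to exactly
\[
e_2\le\frac{g-3}{3},\qquad e_1\le\frac{g-2}{3},\qquad e_0\le\frac{g-1}{3},
\]
the bounds in the statement. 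Conceptually, the map $Q\mapsto\big(c_0(Q),c_1(Q),c_2(Q)\big)$ just sorts the coefficients of $Q$ by the residue of their exponent modulo $3$ (and takes cube roots), so a degree bound on $Q$ transfers term-by-term to separate bounds on the $c_i(Q)$.

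I do not expect a genuine obstacle here: the combination of the two lemmas is immediate, and the only points requiring care are the boundary and small-genus cases, which the convention $\deg(0)=-\infty$ handles uniformly. For instance, when $g\in\{0,1,2\}$ one or more of the right-hand sides is negative, forcing the corresponding $c_i(Q)$ to vanish (and all three when $g=0$), so that the space is trivial and $a(H_f)=0$ in agreement with \cref{collins}. The most error-prone step is thus simply the bookkeeping matching $3e_i+i\le g-1$ with the floor-free inequalities as written, together with checking that these small cases collapse correctly.
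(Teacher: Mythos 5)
Your proposal is correct and follows exactly the route the paper intends: the paper gives no separate proof beyond the remark that the proposition follows by combining \cref{collins} and \cref{Apolys}, and your verification that $\deg Q\le g-1$ is equivalent to the three bounds on $\deg c_i(Q)$ (no leading-term cancellation since $3e_0$, $3e_1+1$, $3e_2+2$ lie in distinct residue classes mod $3$) is precisely the bookkeeping left implicit there. The handling of the small-genus cases via the convention $\deg(0)=-\infty$ is also consistent with the paper's stated conventions.
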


\subsection{Heights and \textit{a}-numbers} \label{heuristic}

To ease notation we will write $F$ for the field of rational functions $\Fq (X)$ in this subsection.
We first recall some more elementary notions for the field $F$. Of course height machinery encompasses 
a much wider range of 
``global fields," but  here we will limit ourselves to the needs at hand.

It is well known that the places of $F$ are in one-to-one correspondence with the set of monic 
irreducible polynomials $P\in \Fq [X]$ 
together with the ``place at infinity" $v_0$ corresponding to the usual degree function. 
The degree of the place $v_0$ is one; the degree of
the place corresponding to the monic irreducible $P$ is simply the degree of $P$ as a polynomial. 
For each place $v$ we have the usual notion of the order $\ord _v$ (the order at $v_0$ of
a rational function being the negative of its degree, the order at the place corresponding to $P$ being the 
exact power of $P$ dividing the 
rational function). We will write
$\Div (F)$ for the divisor group - the free abelian group generated by the set of places of $F$. 
For any non-zero element $\alpha\in F$ we
get a well-defined principal divisor $\divi (\alpha )=\sum _{v}\ord _v(\alpha )\cdot v$. 
The ``product formula" in this particular case states that
the degree of any principal divisor is zero:
$$\deg \big (\divi (\alpha )\big ) =\sum _{v}\ord _v(\alpha)\deg (v)=0.$$
This is readily verified in our case here.

We extend the notion of order to $n$-tuples as follows:
$$\ord _v(\alpha _1,\ldots ,\alpha _n)=\min \{ \ord _v(\alpha _1),\ldots ,\ord _v(\alpha _n)\}.$$
This in turn yields a well-defined divisor (no longer necessarily principal!):
$$\divi (\ua )=\sum _v\ord _v(\ua )\cdot v.$$
Via this we have the definition of the usual height of a non-zero vector $\mathbf{\alpha}\in F^n$:
$$h(\ua ) =-\deg\big ( \divi (\ua )\big ).$$
Since the degree of a principal divisor is zero, we immediately see that this height function $h$ is 
really a function on one-dimensional subspaces of $F^n$. 

All the above can be extended to arbitrary subspaces, but for us here we only need the fact that 
the height $h(V)$ of an $(n-1)$-dimensional hyperplane $V\subset F^n$
is the height of its orthogonal complement. In other words, if $V$ is the subspace of $\ub$ such 
that $\ua\cdot\ub =0$, then $h(V)=h(\ua )$. We also set 
the height of the full space $F^n$ and its orthogonal complement $\{\uo \}$ to both be zero.

\begin{example}\label{heightexample} 
Via an appropriate scalar multiplication, any non-zero $\ua\in F^n$ is projectively equivalent to a non-zero $n$-tuple
of polynomials $(A_1,\ldots ,A_n)$ whose greatest common divisor is 1. For such a tuple, we clearly have 
$\ord _P(A_1,\ldots ,A_n)=0$
for any place $P$ where $P$ is a monic irreducible polynomial. For the place at infinity $v_0$ we have
$$\ord _{v_0}(A_1,\ldots ,A_n)=\min \{-\deg (A_1),\ldots ,-\deg (A_n)\} .$$
Thus
$$h(A_1,\ldots ,A_n)=\max \{\deg (A_1),\ldots ,\deg (A_n)\}.$$
Moreover, the height of the orthogonal complement
$\{ \ub : \ub\cdot\ua =0\}$ is also the maximum of the degrees of the $A_i$'s.
\end{example}

Given the example above, the following lemma indicates why one is lead to heights when contemplating the 
$a$-number $a(H_f)$. (We note for posterity
that the obvious generalization of this lemma is valid for all odd primes $p$, not just $p=3$.)

\begin{lemma}\label{gcd_cf} 
For all $f_1, f_2 \in \mathbb{F}_q[X]$, $c_i(g)=f_2c_i(f_1)$ for $i=0,1,2$ if and only if $g=f_1f_2^3$.
In particular, $f$ is cubefree
if and only if the coefficients $c_0(f),\ c_1(f),\ c_2(f)$ are relatively prime and in that case the height
$$h\big ( c_0(f),c_1(f),c_2(f)\big ) =\max\{\deg\big (c_0(f)\big ) ,\deg\big ( c_1(f)\big ) ,
\deg\big ( c_2(f)\big )\} .$$ 
\end{lemma}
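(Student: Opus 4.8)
The plan is to treat the stated equivalence and its ``in particular'' consequence separately, as they rest on different ideas. For the first equivalence I would argue purely from the uniqueness of the decomposition $f=c_0(f)^3+c_1(f)^3X+c_2(f)^3X^2$. If $g=f_1f_2^3$, then substituting the decomposition of $f_1$ gives $g=(f_2c_0(f_1))^3+(f_2c_1(f_1))^3X+(f_2c_2(f_1))^3X^2$, and uniqueness forces $c_i(g)=f_2c_i(f_1)$; conversely, if $c_i(g)=f_2c_i(f_1)$ for all $i$, then reassembling the decomposition of $g$ factors out $f_2^3$ and yields $g=f_1f_2^3$. Both directions are immediate once one observes that multiplying a polynomial by $f_2^3$ scales each cube $c_i(\cdot)^3$ by $f_2^3$ and hence, by uniqueness, scales each coefficient $c_i$ by $f_2$.

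For the cubefree criterion I would prove the two implications in opposite forms. The easy direction is that coprimality is necessary: if $d=\gcd(c_0(f),c_1(f),c_2(f))$ is non-constant and $\pi$ is an irreducible factor of $d$, then $\pi\mid c_i(f)$ for every $i$ forces $\pi^3\mid c_i(f)^3$ and hence $\pi^3\mid f$, so $f$ is not cubefree. The substance is the converse, which I would prove in contrapositive form: if $\pi$ is irreducible and $\pi^3\mid f$, then $\pi\mid c_i(f)$ for all $i$. This is the step I expect to be the main obstacle, since the variable $X$ couples the three coefficients inside $f$ and there is no obvious way to isolate a single $c_i$.

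The key that unlocks this step is differentiation in characteristic three. Because every cube has vanishing derivative, one computes $f'=c_1(f)^3-c_2(f)^3X$ and $f''=-c_2(f)^3$, which decouples the coefficients one at a time. From $\pi^3\mid f$ one gets $\pi^2\mid f'$ and $\pi\mid f''$ by the usual order-drop of derivatives; reading these off in the order $f'',f',f$ yields first $\pi\mid c_2(f)$ (from $f''$), then $\pi\mid c_1(f)$ (from $f'$, using $\pi\mid c_2(f)$), and finally $\pi\mid c_0(f)$ (from $f$, using the previous two). This establishes $\pi\mid\gcd(c_0(f),c_1(f),c_2(f))$ and completes the equivalence. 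The height formula is then immediate: when $f$ is cubefree the tuple $(c_0(f),c_1(f),c_2(f))$ consists of polynomials with greatest common divisor $1$, so \cref{heightexample} gives $h(c_0(f),c_1(f),c_2(f))=\max\{\deg c_0(f),\deg c_1(f),\deg c_2(f)\}$.
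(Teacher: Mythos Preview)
Your argument is correct. For the first equivalence, your direct use of the uniqueness of the decomposition $f=c_0(f)^3+c_1(f)^3X+c_2(f)^3X^2$ is in fact cleaner than the paper's route, which reduces to the case $f_2=X^j$ via the explicit coefficient formula $c_i(f)=\sum_{k\equiv i\bmod 3}a_kX^{(k-i)/3}$ (and implicitly uses additivity and Frobenius-semilinearity of $f\mapsto c_i(f)$ to handle a general $f_2$). The real difference is in the cubefree criterion: you treat it as an independent claim and bring in differentiation to peel off $c_2,c_1,c_0$ in turn, which works, but the paper's phrasing ``in particular'' is meant literally---once the first equivalence is in hand, the criterion is immediate. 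If $\pi^3\mid f$, write $f=(f/\pi^3)\cdot\pi^3$ and apply the first part with $f_1=f/\pi^3$, $f_2=\pi$ to get $c_i(f)=\pi\,c_i(f/\pi^3)$, hence $\pi\mid c_i(f)$ for all $i$; the converse you already had. So your differentiation trick is a genuine alternative, and a nice illustration of how derivatives in characteristic $p$ annihilate $p$-th powers, but it does more work than necessary and misses the point that the second statement is a corollary of the first.
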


\begin{proof}
Clearly it suffices to prove this in the case where $f_2$ is just a monomial: $f_2=X^j$ for some $j>0$. 
One readily verifies that for any $f(X)=a_0+a_1X +\cdots\in\mathbb{F}_q[X]$
$$\begin{aligned}
c_0(f)&=\sum _{i\equiv 0\mod 3}a_iX^{\frac{i}{3}}\\
c_1(f)&=\sum _{i\equiv 1\mod 3}a_iX^{\frac{i-1}{3}}\\
c_2(f)&=\sum _{i\equiv 2\mod 3}a_iX^{\frac{i-2}{3}}.\end{aligned}
$$
The case where $f_2=X^j$ follows directly from this.
\end{proof}

Given any subspace $V\subseteq F^n$ there is the notion of {\it successive minima}.
In the case of a two-dimensional space $V\subset F^3$ there are two minima, $\mu _1(V),\ \mu _2(V)$, defined as
\begin{align*}
\mu _1(V)
&=\min\{ h(\ux _1)\}\mid\ux _1\in V\setminus\{\mathbf{0}\}\}\text{ and}\\
\mu _2(V)
&=\min\{\max\lb h(\ux _1),h(\ux _2)\rb
\mid\lb\ux _1,\ux _2\rb\text{ is a basis for }V\}.
\end{align*}
We will make frequent use use of Minkowski's Theorem (\cite[Theorem 1]{T}), which in this instance reads
\[
\mu _1(V)\le\mu _2(V),\quad \mu _1(V)+\mu _2(V)=h(V).\numberthis\label{mink}
\]

\begin{thm}\label{anumberheights} Set $p=3$, let $g\ge 0$ and $\epsilon\in\{ 1,2\}$. Set $m=[(g-1)/3]$ and 
suppose $f=f_1f_2^2\in\cP _\epsilon (g)$ with $f_1$ and $f_2$ relatively prime.

If $g=0$ then $a(H_f)=0$. 

Suppose $g\ge 1$ and set $V\subset F^3$ to be the $2$-dimensional
orthogonal complement of $\big ( c_0(f),c_1(f),c_2(f)\big )$ so that
\[ h(V)=
\max\{\deg \big ( c_0(f)\big ), \deg \big ( c_1(f)\big ) ,\deg\big ( c_2(f)\big )\}
.\]
Then 
\[ h(V)=
\begin{cases} 
\frac{2g+\epsilon}{3}=
\deg\big ( c_0(f)\big )>\max\{\deg\big ( c_1(f)\big ) ,\deg\big ( c_2(f)\big )\}
&\text{if $2g+\epsilon\equiv 0\pmod 3$,}\\
\frac{2g+\epsilon -1}{3}
=\deg\big ( c_1(f)\big ) >\deg\big ( c_2(f)\big ) 
&\text{if $2g+\epsilon\equiv 1\pmod 3$,}\\
\frac{2g+\epsilon -2}{3}
=\deg\big ( c_2(f)\big ) 
&\text{if $2g+\epsilon\equiv 2\pmod 3$,}
\end{cases}
\]
and $a(H_f)>0$ only if $\mu _1(V)\le m.$ 

Suppose $Q\in\mathbb{F}_q[X]$ is cube-free and satisfies \cref{fundeq}, i.e., 
$\big ( c_2(Q), c_1(Q), c_0(Q)\big )\in V$ and $c_0(Q),c_1(Q), c_2(Q)$ are relatively prime. Suppose further
that
\[
\mu _1(V)=h\big ( c_2(Q),c_1(Q),c_0(Q)\big )\le m
.\]
Unless we have one of the following three
situations:
\begin{enumerate}
\item
$g\equiv 1\pmod 3$, $\epsilon = 2$, $\deg \big ( c_2(Q)\big ) =\mu _1(V);$
\item
$g\equiv 1\pmod 3$, $\epsilon = 1$, $\deg \big ( c_1(Q)\big ) =\mu _1(V);$
\item 
$g\equiv 2\pmod 3$, $\epsilon = 1$, $\deg \big ( c_2(Q)\big ) =\mu _1(V);$
\end{enumerate}
then the kernel of $C_{f_1}$ is spanned by 
$$\{ X^{3j}Q\omega : 0\le j\le m-\mu _1(V)\}$$  
so that the $a$-number 
$$a(H_f)=m -\mu _1(V) +1.$$
In any of (1), (2), or (3) hold then the kernel of $C_{f_1}$ is spanned by
$$\{ X^{3j}Q\omega : 0\le j\le m-\mu _1(V)-1\}$$ 
if $m>\mu _1(V)$ or simply $\{ 0\}$ if $m=\mu _1(V)$,
so that the $a$-number 
$$a(H_f)=m -\mu _1(V).$$
\end{thm}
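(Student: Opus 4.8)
The plan is to translate the kernel computation of \cref{subsum} into a statement about a single successive minimum of $V$, and then to read the dimension off the degree profile of a minimal vector, using the orthogonality relation \cref{fundeq} to control that profile.

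The case $g=0$ is \cref{collins}. For $g\ge1$ the displayed value of $h(V)$ is immediate: the explicit expansions of $c_0(f),c_1(f),c_2(f)$ recorded in the proof of \cref{gcd_cf}, together with $f$ being monic of degree $2g+\epsilon$, place the leading coefficient of $f$ in $c_{(2g+\epsilon)\bmod 3}(f)$ (at degree $[(2g+\epsilon)/3]$) and bound the degrees of the other two, while \cref{gcd_cf} identifies $h(V)=h\big(c_0(f),c_1(f),c_2(f)\big)$ with $\max_i\deg c_i(f)$. For the remaining assertions, first note that $Q\mapsto\big(c_2(Q),c_1(Q),c_0(Q)\big)$ is an additive, Frobenius-semilinear bijection; it preserves $\FF_q$-dimension and sends $X^{3j}Q$ to $X^{j}\big(c_2(Q),c_1(Q),c_0(Q)\big)$. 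By \cref{subsum} it therefore identifies the kernel of $\mathcal{C}_{f_1}$ with the $\FF_q$-space $W=\{\mathbf x\in V:\deg x_0\le[(g-3)/3],\ \deg x_1\le[(g-2)/3],\ \deg x_2\le m\}$. Every $\mathbf x\in W$ has $h(\mathbf x)\le m$, so a nonzero kernel element forces $\mu_1(V)\le m$; this is the stated necessary condition for $a(H_f)>0$.

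Next I would confine $W$ to a line. A short check on the residue of $g$ modulo $3$ gives $h(V)>2m$ for every $g\ge1$, so \cref{mink} and $\mu_1(V)\le m$ yield $\mu_2(V)=h(V)-\mu_1(V)>m$. Hence any $\mathbf x\in V$ with $h(\mathbf x)\le m$ lies on the line $F\mathbf c$, where $\mathbf c=\big(c_2(Q),c_1(Q),c_0(Q)\big)$ is the given minimal vector: otherwise $\{\mathbf c,\mathbf x\}$ would be a basis of $V$ with $\mu_2(V)\le\max\{h(\mathbf c),h(\mathbf x)\}\le m$, a contradiction. Because $Q$ is cubefree, \cref{gcd_cf} makes $\mathbf c$ primitive, so $\mathbf x=P\mathbf c$ with $P\in\FF_q[X]$; thus $W=\{P\mathbf c:\deg P\le L\}$ with $L=\min\{[(g-3)/3]-\deg c_2(Q),\,[(g-2)/3]-\deg c_1(Q),\,m-\deg c_0(Q)\}$. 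Translating back through the dimension-preserving bijection (the cubing $\lambda\mapsto\lambda^3$ on $\FF_q$ matches the two spans), the kernel is the $\FF_q$-span of $\{X^{3j}Q\omega:0\le j\le L\}$ and $a(H_f)=L+1$, with $L=-1$ meaning the zero space. It then remains to evaluate $L$.

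Using $[(g-1)/3]=m$ one finds that $\big([(g-3)/3],[(g-2)/3]\big)$ equals $(m,m)$, $(m-1,m-1)$, $(m-1,m)$ for $g\equiv0,1,2\pmod 3$, so $L=m-\mu^{\ast}$ with $\mu^{\ast}=\max\{\deg c_2(Q)+\delta,\ \deg c_1(Q)+\delta',\ \deg c_0(Q)\}$ and $(\delta,\delta')$ equal to $(0,0),(1,1),(1,0)$ respectively. Writing $\mu=\mu_1(V)=\max_i\deg c_i(Q)$, one has $\mu^{\ast}=\mu$, hence $a(H_f)=m-\mu+1$, unless a boosted coordinate reaches $\mu+1$, i.e.\ unless $\deg c_2(Q)=\mu$ (when $\delta=1$) or $\deg c_1(Q)=\mu$ (when $\delta'=1$), in which case $a(H_f)=m-\mu$. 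This already gives the dichotomy; the remaining task, and what I expect to be the main obstacle, is to match it to the $\epsilon$-dependent list (1)--(3). For this I would substitute the dominant coordinate of $\big(c_0(f),c_1(f),c_2(f)\big)$, determined by $2g+\epsilon\bmod 3$ exactly as in the $h(V)$ formula, into \cref{fundeq} and compare leading terms. When $c_0(f)$ strictly dominates (the cases $(g,\epsilon)\equiv(1,1),(2,2)$), cancellation forces $\deg c_2(Q)<\mu$, deleting $\deg c_2(Q)=\mu$ from the special list: thus $g\equiv1,\epsilon=1$ is special exactly when $\deg c_1(Q)=\mu$, giving (2), while $g\equiv2,\epsilon=2$ is never special. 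When $c_1(f)$ dominates (the case $(g,\epsilon)\equiv(1,2)$), the same comparison shows $\deg c_1(Q)=\mu$ can occur only together with $\deg c_2(Q)=\mu$, so the special condition collapses to $\deg c_2(Q)=\mu$, giving (1); and $(g,\epsilon)\equiv(2,1)$, where $c_2(f)$ dominates, needs no extra input and yields (3) directly. I expect this leading-term bookkeeping, keeping straight the three residues of $2g+\epsilon$, the strict-versus-weak dominance in $\big(c_0(f),c_1(f),c_2(f)\big)$, and its effect on which coordinate of the minimal vector may attain $\mu$, to be the delicate heart of the argument, with everything else reducing to \cref{mink} and routine degree arithmetic.
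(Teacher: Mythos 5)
Your proposal is correct and follows essentially the same route as the paper: reduce via \cref{subsum} to polynomials satisfying \cref{fundeq} and the degree constraints, use Minkowski's theorem \cref{mink} together with the computation of $h(V)$ to show every such solution is a polynomial multiple of the primitive minimal vector $\big(c_2(Q),c_1(Q),c_0(Q)\big)$, and then compare leading terms in \cref{fundeq} to determine for which $\epsilon$ each degree-constraint failure can actually occur. Your $(\delta,\delta')$ bookkeeping is just a more systematic packaging of the paper's case-by-case treatment of the three failure scenarios, and your verification that, e.g., $\deg c_1(Q)=\mu_1(V)$ forces $\deg c_2(Q)=\mu_1(V)$ when $c_1(f)$ dominates correctly accounts for why the list collapses to exactly situations (1)--(3).
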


\begin{proof}
The first part of the theorem follows directly from \cref{gcd_cf} and the definitions.

For the next part, \cref{subsum} and \cref{heightexample} immediately implies that $a(H_f)>0$ only if 
$\mu _1(V)\le\frac{g-1}{3}$.

Now suppose $\mu _1(V)=h\big ( c_2(Q),c_1(Q),c_0(Q)\big )\le\frac{g-1}{3}$ where $Q$ satisfies \cref{fundeq}. 
To ease notation temporarily write $l$ for $\mu _1(V)$.
There are only three possible scenarios where $X^{3i}Q$ don't satisfy the degree requirements in \cref{subsum}
for all $0\le i\le m-l$: 
$\deg \big ( X^{m-l}c_2(Q)\big ) =\frac{g-1}{3}$ (and so $\deg \big ( c_2(Q)\big ) =l$ and
$g\equiv 1\pmod 3$),
$\deg \big ( X^{m-l}c_2(Q)\big ) =\frac{g-2}{3}$ (and so $\deg \big ( c_2(Q)\big ) =l$ and 
$g\equiv 2\pmod 3$), 
and $\deg \big ( X^{m-l}c_1(Q)\big )=\frac{g-1}{3}$ 
(and so $\deg \big ( c_1(Q)\big ) =l$ and $g\equiv 1\pmod 3$ once again). 
Note that in all three cases here we {\it do} have $X^{3i}Q$ satisfies the degree requirements 
in \cref{subsum}
for all $0\le i< m-l$ assuming $m-l>0$; if $m=l$ then $Q$ does not satisfy the
degree requirements in \cref{subsum}.
In that first
scenario, we can't have $\deg \big ( c_0(f)\big )$ strictly larger than $\deg\big ( c_1(f)\big )$ and 
$\deg\big  (c_2(f)\big )$, 
since $\deg \big ( c_0(Q)\big )$ is at least as large as
$\deg\big ( c_1(Q)\big )$ and $\deg\big ( c_0(Q)\big )$ by hypothesis and $Q$ satisfies \cref{fundeq}. 
By what we have already shown this implies that
$2g+\epsilon\not\equiv 0\pmod 3$, so that $\epsilon \neq 1$. The second and third scenarios are dealt with 
in an entirely similar manner.

If we don't have one of the situations (1), (2), or (3) and $\mu _1(V)\le m$, 
then we have shown that $Q$
satisfies the degree requirements in \cref{subsum} and thus $Q\omega$ 
is in the kernel of the 
Cartier operator. Moreover, if $\big ( c_2(P), c_1(P), c_0(P)\big )$
is any other element of $V$ linearly independent from $\big ( c_2(Q),c_1(Q),c_0(Q)\big )$, 
then its height is at least 
$\mu _2(V)$. By \cref{mink} and our
calculation of $h(V)$, this implies that $\max\{\deg \big ( c_2(P)\big ),\deg\big  ( c_1(P)\big ),
\deg\big ( c_0(P)\big )\}>\frac{g-1}{3}$, 
thus $P\omega$ is not in the kernel. This shows that all $P\in\mathbb{F}_q[X]$ with $P\omega$ in
the kernel of the Cartier operator are $\mathbb{F}_q[X]$-multiples of $Q$. The theorem now follows
from \cref{gcd_cf} and \cref{subsum}.
\end{proof}

\section{Heights of subspaces}\label{thunder}

The results from this section hold for finite fields of arbitrary characteristic, so suppose that $q$ is any prime power.
We continue to write $F=\Fq (X)$, where $X$ is transcendental over $\Fq$.
In this section, we compute statistics of subspaces of $F^3$ with specified height.

It will prove useful to allow local changes of coordinates. Specifically, if $\cA = (\cA _v)\in\gl _n(F_{\A})$ 
(the indexing here is over the places $v$ of $F$) is
an element of the general linear group over the adele ring $F_{\A}$, then for any non-zero $\ua\in F^n$ 
we set the ``twisted height" $h_{\cA}(\ua )$ to be
$$h_{\cA}(\ua )=-\deg \left ( \sum _v\ord _v\big (\cA _v(\ua )\big ) \cdot v\right ).$$
(Since $\cA\in\gl _n(F_{\A})$ the local changes of coordinates only change orders at finitely many places, 
so that this is well-defined.)
In this case we also set 
$$h_{\cA}(\{\uo \} )=0,\qquad h_{\cA }(F^n)=-\deg\left (\sum _v \det (\cA _v)\cdot v\right  ) .$$
Obviously the usual height $h$ without the subscript is simply the case where $\cA$ is the identity. 

\subsection{Intermediate counting results}\label{intermediate}

Our first result uses height machinery to count the number of 2-dimensional subspaces of $F^3$ 
containing a given non-zero vector.
We begin with some notation.
For any $n\in\Z_{\geq1}$ and $k\in\Z_{\geq0}$, let
$N_{F^n}(k)$ denote the number of one-dimensional subspaces of $F^n$ of (untwisted) height $k$.
Further, if $W\subset F^3$ is a one-dimensional subspace of $F^3$, then we let
$N_{F^n/W}(k)$ denote the number of $2$-dimensional subspaces $V\supset W$ with height $h(V)=h(W)+k$.

It turns out that
\[
N_{F^n}(k)=
\begin{cases} q^{nk-n+1}(q^n-1)(q^{n-1}-1)/(q-1)&\text{if $k\ge 1$,}\\
(q^n-1)/(q-1)&\text{if $k=0$.}\end{cases}\numberthis\label{2}
\]
Indeed, this follows from the proof of the following proposition.

\begin{prop}\label{lkheight}
Suppose that $l\in\Z_{\geq0}$, that $k\in\Z_{\geq1}$, and that $W\subset F^3$ is a subspace of height $l$.
Then
\[
N_{F^3/W}(k)=q^{2k+l-1}(q^2-1).
\]
\end{prop}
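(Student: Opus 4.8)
The plan is to dualize and turn the problem into a count of primitive vectors in an $\Fq[X]$-lattice. Since $\dim W=1$ and $h(W)=l$, its orthogonal complement $W^{\perp}$ is $2$-dimensional with $h(W^{\perp})=h(W)=l$, and $V\mapsto V^{\perp}$ is an inclusion-reversing, height-preserving bijection between the $2$-dimensional subspaces $V\supset W$ and the $1$-dimensional subspaces $U\subset W^{\perp}$. Hence $N_{F^3/W}(k)$ equals the number of lines $U\subset W^{\perp}$ with $h(U)=l+k$. I would parametrize each such $U$ by its primitive integral generator: clearing denominators and dividing out the content produces $\uv=(v_1,v_2,v_3)\in M:=W^{\perp}\cap\Fq[X]^3$ with relatively prime entries, unique up to $\Fq^{\times}$, and \cref{heightexample} gives $h(U)=\max_i\deg(v_i)$. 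Thus
\[
N_{F^3/W}(k)=\tfrac{1}{q-1}\,\#\{\uv\in M:\ \uv\text{ primitive},\ \max_i\deg(v_i)=l+k\}.
\]

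\textbf{Choosing a good basis of $M$.} Write $W=\langle(w_1,w_2,w_3)\rangle$ with $(w_1,w_2,w_3)$ primitive. Because $\gcd(w_1,w_2,w_3)=1$, the pairing $\uv\mapsto\sum_i v_iw_i$ maps $\Fq[X]^3$ onto $\Fq[X]$, so $M=\ker$ is a saturated (direct-summand) free $\Fq[X]$-module of rank $2$. I would then pick a basis $\ux_1,\ux_2$ of $M$ that is reduced at the infinite place, so that $\max_i\deg$ of $\alpha\ux_1+\beta\ux_2$ equals $\max(\deg\alpha+d_1,\deg\beta+d_2)$, where $d_j=\max_i\deg$ of $\ux_j$. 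Every basis vector of a saturated module is primitive, so $d_j=h(\ux_j)$, and Minkowski's equality \eqref{mink} forces $d_1+d_2=\mu_1(W^{\perp})+\mu_2(W^{\perp})=l$. Saturation also shows that $\alpha\ux_1+\beta\ux_2$ is primitive in $\Fq[X]^3$ if and only if $\gcd(\alpha,\beta)=1$. The count therefore becomes: the number of coprime pairs $(\alpha,\beta)\in\Fq[X]^2$ with $\max(\deg\alpha+d_1,\deg\beta+d_2)=l+k$.

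\textbf{The coprime-pair count.} Let $B(a,b)$ denote the number of coprime pairs $(\alpha,\beta)$ with $\deg\alpha\le a$ and $\deg\beta\le b$. Writing each nonzero pair as its (monic) content times a coprime pair yields the recursion $q^{a+b+2}-1=\sum_{d=0}^{\min(a,b)}q^{d}B(a-d,b-d)$, which I would solve (via the generating-function identity $\mathcal E(t)=(1-qt)^{-1}\mathcal P(t)$) to get $B(a,b)=(q-1)(q^{a+b+1}+1)$. The quantity we want is the boundary difference $B(l+k-d_1,l+k-d_2)-B(l+k-d_1-1,l+k-d_2-1)$; since $d_1+d_2=l$, both arguments sum to $l+2k$ and $l+2k-2$ respectively, so the difference equals $(q-1)q^{l+2k-1}(q^2-1)$. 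Dividing by $q-1$ gives $N_{F^3/W}(k)=q^{2k+l-1}(q^2-1)$, visibly depending only on $l$. Running the identical content sieve on primitive $n$-tuples of prescribed maximal degree produces \eqref{2} as a by-product, as promised in the text.

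\textbf{Main obstacle.} The delicate step is the second one: producing a basis of $M$ that is simultaneously primitive and orthogonal at the infinite place with $d_1+d_2=l$. The global height $h$ couples the degree at the infinite place with the content at the finite places, so a basis realizing the successive minima need not behave orthogonally on non-primitive combinations. This is precisely where the equality case of Minkowski's theorem \eqref{mink} and the adelic twisted-height formalism enter—either to guarantee such a reduced basis, or, equivalently, to transport a general height-$l$ line $W$ by a local change of coordinates to the standard representative $\langle(1,0,X^l)\rangle$, for which $M=\langle(0,1,0),(-X^l,0,1)\rangle$ renders every claim transparent while the determinant twist tracks the shift by $l$.
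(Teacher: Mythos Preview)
Your approach is correct and genuinely different from the paper's. The paper works directly on the quotient $F^3/W$ via the adelic twisted-height formalism of \cite{T}: it produces a twist $\cA\in\gl_2(\bfa)$ with $h_{\cA}(V/W)=h(V)-h(W)$, uses Minkowski's theorem on $F^3/W$ to pin down its successive minima, and then evaluates $N_{F^3/W}(k)$ as a M\"obius-inverted sum of Riemann--Roch dimensions $l(j\fa_0,\cA)$. Your route---dualizing to lines in $W^{\perp}$, passing to the saturated lattice $M=W^{\perp}\cap\Fq[X]^3$, choosing a basis that is reduced at infinity, and counting coprime polynomial pairs---is more elementary and self-contained; the ``main obstacle'' you flag is exactly the existence of such a reduced basis with $d_1+d_2=l$, which is Grothendieck's splitting theorem for rank-$2$ bundles on $\mathbb{P}^1$ (equivalently, the equality case of \cref{mink} over a genus-zero field). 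The paper's method generalizes uniformly to quotients of any rank and to higher-genus function fields; yours makes the genus-zero case completely explicit and avoids the adelic language.

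One small correction: your recursion $q^{a+b+2}-1=\sum_{d=0}^{\min(a,b)}q^{d}B(a-d,b-d)$ is not literally valid, because when one of $\alpha,\beta$ is zero the content can have degree exceeding $\min(a,b)$. Your formula $B(a,b)=(q-1)(q^{a+b+1}+1)$ is nonetheless correct, and the final difference is unaffected since in the regime $\max(\deg\alpha+d_1,\deg\beta+d_2)=l+k>l\ge\max(d_1,d_2)$ neither $\alpha$ nor $\beta$ can vanish. The cleanest fix is to run the content sieve only on pairs with both entries nonzero, where the recursion holds on the nose and gives $C(a,b)=(q-1)(q^{a+b+1}-1)$; the boundary difference $C(l+k-d_1,l+k-d_2)-C(l+k-1-d_1,l+k-1-d_2)$ is then identical to yours.
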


\begin{proof}
As noted above, our method of proof applies to \cref{2} as well; we will make appropriate remarks in
this regard throughout the proof.

Let $\bfa$ be the adele ring of $F$. Then according to \cite[Lemma 8b]{T} there is an $\cA\in\gl_2(\bfa )$ 
giving a ``twisted height'' $h_{\cA}$ on the two-dimensional factor space $F^3/W$ such that 
$h_{\cA}(V/W)=h(V)-h(W)$ 
for all subspaces $V\supseteq W$.
Thus $N_{F^3/W}(k)$ is the number of one-dimensional subspaces of $F^3/W$ with twisted height $k$. 
Similar to \cref{mink} above we have Minkowski's Theorem:
\begin{align*}
\mu _1(F^3/W)+\mu _2(F^3/W)&=h_{\cA}(F^3/W)=h(F^3)-h(W)=-l.\numberthis\label{4}
\end{align*}
We see that $\mu _1(F^3/W)\ge 0-h(W)=-l$ since $h(V)\ge 0$
for all subspaces $V\subseteq F^3$.
Hence
\[
-l\le\mu _1(F^3/W)\le -l/2,\qquad \mu _1(F^3/W)\le\mu _2(F^3/W)\le 0.\numberthis\label{5}
\]
Note that in the case $l=0$ we get $\mu _1(F^3/W)=\mu _2(F^3/W)=0$.
We will use \cref{5} together with techniques from \cite{T} to evaluate the counting function $N_{F^3/W}$.

Our counting function can be expressed in terms of the dimensions of certain Riemann-Roch spaces and divisors.
All necessary background on this may be found in \cite{TSiegel,T}. 
For any divisor $\fa\in\Div (F)$, the divisor group of $F$, and any $\cA\in\gl _n{(\bfa)}$ set
\[
L(\fa ,\cA)=\{\ux\in F^n\mid
\ord_v{(\cA _v\ux )}\ge -\ord_v{(\fa)}\ 
\text{for all $v\in M(F)$}\},
\]
where $M(F)$ denotes the set of places of the field $F$.
As shown in \cite[\S  II]{TSiegel}, the set $L(\fa ,\cA)$ is a finite-dimensional vector space over $\Fq$;
denote its dimension by $l(\fa ,\cA )$. Now suppose $\cA$ arises from a factor space $F^3/W$ as above.
Since $\mu_2(F^3/W)\le 0$ by \cref{5}, for any divisor $\fa$ with $\deg{(\fa )}\ge -1$ we have
\[
l(\fa ,\cA)
=2\big (\deg (\fa )+1\big )
+\deg\divi\det (\cA )
=2\big (\deg (\fa )+1\big )+l\numberthis\label{6}
\]
by \cite[Lemma 10]{T}.
Note that the height of the full space $h(F^3/W)=-\deg\divi\det (\cA)$ always.
Similarly, since $\mu _1(F^n)=\mu _2(F^n)=\cdots =\mu _n(F^n)=0=\deg\divi\det (I_n)$ 
for the identity element $I_n\in\gl _n (F_\A)$,
\[
l(\fa ,I_n)=n\big (\deg (\fa )+1\big )\refstepcounter{primes}\tag{\theequation'}\label{6'}
\]
for all divisors $\fa$ with $\deg (\fa )\ge -1$.

Fix a divisor $\fa _0$ of degree 1. Then for any $j\ge -1$
\[
l(j\fa _0,\cA)=2(j+1)+l,\numberthis\label{7}
\]
\[
l(j\fa _0,I_n)=n(j+1)\refstepcounter{primes}\tag{\theequation'}\label{7'}
\]
by \cref{4}, \cref{5}, \cref{6}, and \cref{6'}.

Next, for any integer $j\ge 0$ set
\[
b(j)=\sum_{\substack{\fc\ge 0\\ \deg (\fc )=j}}{\mu (\fc )},
\]
where here $\mu$ denotes the usual M\"obius function on effective divisors.
By \cite[Lemma 12]{T}
\[
b(j)=\begin{cases}
1&\text{if $j=0$,}\\
-(q+1)&\text{if $j=1$,}\\
q&\text{if $j=2$,}\\
0&\text{if $j>2$.}\end{cases}\numberthis\label{8}
\]

With the above notation in mind, \cite[(5)]{T} reads
\[
(q-1)N_{F^3/W}(k)
=\sum _{j=0}^{k-\mu _1(F^3/W)}b(j)\big ( q^{l((k-j)\fa _0,\cA)}-1\big ).
\]

Note by \cref{4} that $k-\mu _1(F^3/W)\ge 2$ if $l,k\ge 1$ or if $l=0$ and $k\ge 2$.
Thus the former equation above, in conjunction with \cref{6}, \cref{7}, and \cref{8}, yields
\begin{align*}
(q-1)N_{F^3/W}(k)&=\big (q^{l(k\fa _0,\cA)}-1\big ) -(q+1)\big (q^{l((k-1)\fa _0,\cA)}
-1\big ) +q\big (q^{l((k-2)\fa _0,\cA)}-1\big )\\
&=\big (q^{2(k+1)+l}-1\big ) -(q+1)\big ( q^{2k+l}-1\big )
+q\big ( q^{2(k-1)+l}-1\big )\\
&= q^{2k+l}\big (q^2-(q+1)+q^{-1}\big )\\
&=q^{2k+l-1}(q-1)(q^2-1)
\end{align*}
provided $l,k\ge 1$ or $l=0$ and $k\ge 2$.
In the case $l=0$ we have $\mu _1(F^3/W)=0$ as noted above, so that when $k=1$ we get
\begin{align*}
(q-1)N_{F^3/W}(1)&=\big (q^{l(\fa _0,\cA)}-1\big ) -(q+1)\big (q^{l(0,\cA)}-1\big )\\
&=(q^4-1)-(q+1)(q^2-1)\\
&= q^4-q^3-q^2+q\\
&=(q-1)(q^3-q).
\end{align*}

Similarly, since $\mu _1(F^n)=0$, we see that~\cite[(5)]{T} reads
\[
(q-1)N_{F^n}(k)=\sum _{j=0}^{k}b(j)\big ( q^{l((k-j)\fa _0, I_n)}-1\big).
\]
This together with \cref{6'}, \cref{7'}, and \cref{8} yield
\begin{align*}
(q-1)N_{F^n}(k)&=(q^{n(k+1)}-1)-(q+1)(q^{nk} -1)+q(q^{n(k-1)}-1)\\
&=q^{nk+n}-q^{nk+1}-q^{nk}+q^{nk-n+1}\\
&=q^{nk}(q^n-1)-q^{nk-n+1}(q^n-1)\\
&=q^{nk-n+1}(q^n-1)(q^{n-1}-1)
\end{align*}
if $k\ge 2$,
\begin{align*}
(q-1)N_{F^n}(1)&=(q^{n2}-1)-(q+1)(q^{n} -1)\\
&=q^{n2}-q^{n+1}-q^n+q\\
&=q^n(q^n-1)-q(q^n-1)\\
&=q(q^n-1)(q^{n-1}-1),
\end{align*}
and finally
\[
(q-1)N_{F^n}(0)=q^n-1.
\]
\end{proof}

In what follows it will prove convenient to fix some notation for the various sets we will count.

\begin{defn}\label{Sdef}
For any positive integers $m$ and $l$ let $S(m)$ denote the set of all ordered triples $(A_1,A_2,A_3)$ 
of relatively prime polynomials in $\Fq [X]$ with maximal degree $m$ and $S(m,l)$ denote the subset of 
$S(m)$ consisting of those triples such that there is a solution $\ux = (x_1,x_2,x_3)\in F^3$ of height $l$ to the 
homogeneous linear equation 
\[ x_1A_1+x_2A_2+x_3A_3=0.\numberthis\label{1}
\]
\end{defn}

As discussed previously, the set $S(m,l)$ corresponds to the two-dimensional subspaces 
$V\subset F^3$ with $h(V)=m$ that contain a one-dimensional subspace $W\subset V$ with $h(W)=l$.
In fact each such subspace $V$ corresponds to precisely $q-1$ triples in $S(m,l)$; this takes into account 
multiplication by elements of $\lp\Fq\rp^{\times}$.
The required solution(s) to \cref{1} of height $l$ are also ordered triples of relatively prime 
polynomials $(P_1,P_2,P_3)$ after multiplication by a suitable non-zero element of $F$. 
We note by Minkowski's Theorem \cref{mink} that this
solution $(P_1,P_2,P_3)$ is projectively unique if $m>2l$.

\begin{defn}\label{Tdef}
For any positive integers $m$ and $l$
let $T(m,l)$ denote the subset of $S(m,l)$ where there is an ordered triple of relatively prime
polynomials $(P_1,P_2,P_3)$ that is a solution to \cref{1} 
of height $l$ with the additional stipulation that $\deg (P_2)=l$. 
Let $T'(m,l)$ denote the subset of $S(m,l)$ where there is a solution to \cref{1} of height $l$ with
$\deg (P_3)=l>\max\{\deg (P_1),\deg (P_2)\}$.
\end{defn}

We will also need to further delineate these ordered triples above according to which entry has the largest
degree.

\begin{defn}\label{Rdef}
For any non-empty subset $R\subseteq
\{1,2,3\}$ let $S_R(m),\  S_R(m,l)$, $T_R(m,l)$, and $T'(m,l)$ denote the subset of $S(m),\ S(m,l)$, $T(m,l)$, and
$T'(m,l)$,
respectively,
consisting of those ordered
triples $(A_1,A_2,A_3)$ where the degree of $A_i$ is $m$ if and only if $i\in R$. 
\end{defn}

As a notational
convenience we will drop the set brackets in the subscript here, e.g., we will write $S_{1,3}(m,l)$
instead of the more cumbersome $S_{\{ 1,3\} }(m,l)$.

We have determined the connection between curves with given $a$-numbers and these sets above in 
\cref{anumberheights}. 
Our goal for the remainder of this subsection is to determine the cardinalities of these sets above. 

\begin{lemma}\label{Thunder1}
The sets $S(m)$,  $S(m,l)$, $T(m,l)$, and $T'(m,l)$ are disjoint unions: 
\begin{align*}
S(m)&=S_1(m)\cup S_2(m)\cup\cdots \cup S_{1,2,3}(m),\\
S(m,l)&=S_1(m,l)\cup S_2(m,l)\cup\cdots \cup S_{1,2,3}(m,l)\\
T(m,l)&=T_1(m,l)\cup T_3(m,l)\cup T_{1,3}(m,l)\cup T_{1,2}(m,l)\cup T_{2,3}(m,l)\cup T_{1,2,3}(m,l)\text{, and}\\
T'(m,l)&=T'_1(m,l)\cup T'_2(m,l)\cup T'_{1,2}(m,l)
.\end{align*}
Further, 
\begin{gather*}
|S_1(m)|=|S_2(m)|=|S_3(m)|, \qquad |S_{1,2}(m)|=|S_{1,3}(m)|=|S_{2,3}(m)|,\\
|T_1(m,l)|=|T_3(m,l)|, \qquad  |T_{1,2}(m,l)|=|T_{2,3}(m,l)|,\\
|T_1'(m,l)|=|T_2'(m,l)|,
\end{gather*}
and if $m>2l$
\[
T_{1,2}'(m,l)= S_{1,2}(m,l)\setminus T_{1,2}(m,l).
\]
\end{lemma}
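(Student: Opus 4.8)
The plan is to establish the identity by a pair of inclusions, the engine being the projective uniqueness of the minimal solution when $m>2l$ (recorded in the discussion following \cref{Sdef}) combined with a degree count on the defining relation \cref{1}. Throughout I fix a triple $(A_1,A_2,A_3)\in S_{1,2}(m,l)$, so that by \cref{Rdef} we have $\deg(A_1)=\deg(A_2)=m$ and $\deg(A_3)<m$. Since $m>2l$, Minkowski's Theorem \cref{mink} furnishes a relatively prime solution $\mathbf{P}=(P_1,P_2,P_3)$ to \cref{1} of height $l$ that is unique up to an $\Fq^\times$-scalar; in particular $\max\{\deg(P_1),\deg(P_2),\deg(P_3)\}=l$, and because rescaling does not change degrees the individual quantities $\deg(P_i)$ are well-defined attributes of the triple. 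By \cref{Tdef}, the triple lies in $T_{1,2}(m,l)$ precisely when $\deg(P_2)=l$, so $S_{1,2}(m,l)\setminus T_{1,2}(m,l)$ is exactly the set of triples whose unique minimal solution satisfies $\deg(P_2)<l$.

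The heart of the argument is a degree count on $P_1A_1+P_2A_2+P_3A_3=0$, and the decisive structural fact is that the third coordinate is degree-deficient: since $\deg(A_3)<m$, we always have $\deg(P_3A_3)=\deg(P_3)+\deg(A_3)<l+m$. Assuming $\deg(P_2)<l$, I would show $\deg(P_1)<l$ as well. Indeed, were $\deg(P_1)=l$, then $\deg(P_1A_1)=l+m$, whereas $\deg(P_2A_2)=\deg(P_2)+m<l+m$ and $\deg(P_3A_3)<l+m$; thus $P_1A_1$ contributes an uncancellable monomial of degree $l+m$, contradicting the vanishing of the sum. Hence both $\deg(P_1)<l$ and $\deg(P_2)<l$, and since the height is $l$ this forces $\deg(P_3)=l>\max\{\deg(P_1),\deg(P_2)\}$, which is exactly the membership condition for $T'_{1,2}(m,l)$ from \cref{Tdef} and \cref{Rdef}. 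This proves $S_{1,2}(m,l)\setminus T_{1,2}(m,l)\subseteq T'_{1,2}(m,l)$. The reverse inclusion is immediate: a triple in $T'_{1,2}(m,l)$ has $\deg(P_3)=l>\deg(P_2)$, hence $\deg(P_2)<l$, so it lies in $S_{1,2}(m,l)$ but not in $T_{1,2}(m,l)$.

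The computation itself is short, so the points requiring care are conceptual rather than technical. First, one must invoke $m>2l$ to know that the degree profile of the minimal solution is unambiguous; without projective uniqueness, two distinct height-$l$ solutions could in principle realize their maximal degree in different coordinates, and the sets $T_{1,2}$ and $T'_{1,2}$ would fail to be cleanly complementary within $S_{1,2}$. Second, I would emphasize that it is specifically the hypothesis $\deg(A_3)<m$ (built into the subscript $\{1,2\}$) that prevents the third coordinate from absorbing a leading term produced by the first; this is the only subtle step, and it is precisely why the lemma isolates this particular index set rather than, say, $S_{1,3}(m,l)$, where the analogous bookkeeping would be different.
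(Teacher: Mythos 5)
Your argument for the final identity $T_{1,2}'(m,l)= S_{1,2}(m,l)\setminus T_{1,2}(m,l)$ is correct and is essentially the paper's own: both hinge on the projective uniqueness of the height-$l$ solution when $m>2l$, together with the degree count showing that $\deg(P_1)=l$ would leave an uncancellable term of degree $l+m$ in $P_1A_1+P_2A_2+P_3A_3$, which forces $\deg(P_3)=l>\max\{\deg(P_1),\deg(P_2)\}$.

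The gap is that this is only one of the lemma's assertions, and you prove none of the others. First, the displayed decompositions are not all tautological: the union for $T(m,l)$ conspicuously omits $T_2(m,l)$, and the union for $T'(m,l)$ omits every $T'_R(m,l)$ with $3\in R$; these omissions encode the nontrivial claim that those sets are empty. (The paper proves, for instance, that $T_2(m,l)=\emptyset$ by noting that if $\deg(A_2)$ strictly exceeds both $\deg(A_1)$ and $\deg(A_3)$ while $\deg(P_2)=\max_i\deg(P_i)$, then $\deg(P_1A_1+P_2A_2+P_3A_3)=\deg(A_2)+\deg(P_2)\ge 0$, so no such triple can solve \cref{1}; the case of $T'_R(m,l)$ with $3\in R$ is analogous.) Second, the five cardinality equalities --- $|S_1(m)|=|S_2(m)|=|S_3(m)|$, $|S_{1,2}(m)|=|S_{1,3}(m)|=|S_{2,3}(m)|$, $|T_1(m,l)|=|T_3(m,l)|$, $|T_{1,2}(m,l)|=|T_{2,3}(m,l)|$, and $|T'_1(m,l)|=|T'_2(m,l)|$ --- require an argument; the paper obtains them from the action of the symmetric group permuting coordinates, observing that the transposition exchanging $1$ and $3$ preserves the defining condition $\deg(P_2)=l$ of $T$, while the transposition exchanging $1$ and $2$ preserves the defining condition of $T'$. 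These equalities are used repeatedly downstream (in \cref{precounting}, \cref{Scounting}, \cref{Tprimecounting}, and \cref{Tcounting}), so they cannot be waved away. As written, your proposal establishes the last line of the lemma and leaves the rest unaddressed.
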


\begin{proof}
That $S(m)$ and $S(m,l)$ are the given disjoint unions is obvious from the definitions. The
same is true for $T(m,l)$ except here $T_2(m,l)$ is empty. To see why,
we note that if $(A_1,A_2,A_3)\in T(m,l)$ with $\deg (A_2)>\max\{\deg (A_1),\deg (A_3\})$ and 
$(P_1,P_2,P_3)$ is any triple of polynomials with $\deg (P_2)=\max\{\deg (P_1),\deg (P_2),\deg (P_3)\}$, then
the degree of $A_1P_1+A_2P_2+A_3P_3$ is $\deg (A_2)+\deg (P_2)\ge 0$, so that
$(P_1,P_2,P_3)$ is not a solution to \cref{1}. A similar argument shows that $T'_R(m,l)$ is empty whenever $3\in R$. 

Next, any element $\sigma$ of the symmetric group on $\{ 1,2,3\}$
gives rise to a permutation of $S(m)$ and $S(m,l)$ via
$(A_1,A_2,A_3)\mapsto (A_{\sigma (1)},A_{\sigma (2)},A_{\sigma (3)})$.
One readily checks that this mapping takes $S_i(m)$ to $S_{\sigma (i)}(m)$ and $S_{i,j}(m)$ to
$S_{\sigma (i),\sigma (j)}(m)$
for any $i,j\in\{1,2,3\}$ in a one-to-one and onto manner.
Further, the permutation that permutes $1$ with $3$ and leaves $2$ fixed takes $T_1(m,l)$
to $T_3(m,l)$ and
$T_{1,2}(m,l)$ to $T_{2,3}(m,l)$. The permutation that permutes $1$ with $2$ and leaves $3$ fixed takes
$T'_1(m,l)$ to $T'_2(m,l)$.  

Finally, suppose $(A_1,A_2,A_3)\in S_{1,2}(m,l)\setminus T_{1,2}(m,l)$ and $m>2l$. Then by definition 
$\deg (A_1)=\deg (A_2)
=m>\deg (A_3)$ and as noted above the projectively unique solution $(P_1,P_2,P_3)$ to \cref{1} 
satisfies $\deg (P_2)<l=\max\{ \deg (P_1),\deg (P_3)\}.$
This can only be the case if $\deg (P_3)=l>\deg (P_1)$ since otherwise we would have
\[ \deg (P_1A_1) = l+m>\max\{\deg (P_2A_2),\deg (P_3A_3)\},
\] 
contradicting the assumption that
$(P_1,P_2,P_3)$ is a solution to \cref{1}. This completes the proof.
\end{proof}

Consider the functions $\mapf,\mapg\colon\Fq [X]^3\times\lp\Fq\rp^{\times}\rightarrow \Fq [X]^3$ given by
\[
\mapf\big ( (Q_1,Q_2,Q_3),c\big ) = (Q_1,Q_2,Q_3+cQ_1)
\]
and
\[
\mapg\big ( (Q_1,Q_2,Q_3),c\big ) = (Q_1-cQ_3,Q_2,Q_3).
\]

\begin{lemma}\label{SolTranslate}
The function $\mapf$ is one-to-one when restricted to those triples where $Q_1\neq 0$. 
Both $\mapf$ and $\mapg$ take relatively prime triples to 
relatively prime triples, and neither $\mapf$ nor $\mapg$ alter the height of $(Q_1,Q_2,Q_3)$, as long as 
\[
\deg{(Q_1)}\ge\max\{\deg{(Q_2)},\deg{(Q_3)}\}
\]
or
\[
\deg{(Q_1)}\le \max\{\deg{(Q_2)},\deg{(Q_3)}\},
\]
respectively.
\end{lemma}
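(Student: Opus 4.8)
The plan is to regard $\mapf$ and $\mapg$ as elementary transvections on $\Fq[X]^3$ and to dispatch the three assertions—injectivity of $\mapf$, preservation of relative primality, and preservation of height—in turn, reducing the height statement to a comparison of degrees once the behavior of the $\gcd$ is understood.

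First I would record the identity underlying the relative-primality claim. Since $\mapf$ replaces the third coordinate by $Q_3+cQ_1$ and $\mapg$ replaces the first by $Q_1-cQ_3$, in each case one coordinate is altered by an $\Fq[X]$-multiple of another, so the ideal of $\Fq[X]$ generated by the three coordinates is unchanged: $(Q_1,Q_2,Q_3+cQ_1)=(Q_1,Q_2,Q_3)$ and $(Q_1-cQ_3,Q_2,Q_3)=(Q_1,Q_2,Q_3)$ as ideals. As $\Fq[X]$ is a principal ideal domain, the generator of this ideal—the $\gcd$ of the triple—is therefore preserved up to a unit, with no hypothesis on degrees. In particular a relatively prime triple maps to a relatively prime triple under either function.

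For the height I would use \cref{heightexample} together with the scalar invariance of $h$: clearing the $\gcd$ from a triple produces a relatively prime triple without changing the height, so $h(Q_1,Q_2,Q_3)=\max_i\deg(Q_i)-\deg\big(\gcd(Q_1,Q_2,Q_3)\big)$ for every triple. Since the $\gcd$ is preserved by the previous step, it suffices to check that the maximal degree is preserved under the stated hypotheses. For $\mapf$, when $\deg(Q_1)\ge\max\{\deg(Q_2),\deg(Q_3)\}$ the maximal degree is $\deg(Q_1)$; the first two coordinates are untouched and $\deg(Q_3+cQ_1)\le\deg(Q_1)$ since $c\ne0$, so the maximum remains $\deg(Q_1)$. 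For $\mapg$, when $\deg(Q_1)\le\max\{\deg(Q_2),\deg(Q_3)\}$ the maximal degree is $\max\{\deg(Q_2),\deg(Q_3)\}$; the last two coordinates are untouched and $\deg(Q_1-cQ_3)\le\max\{\deg(Q_1),\deg(Q_3)\}\le\max\{\deg(Q_2),\deg(Q_3)\}$, so again the maximum is unchanged.

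Finally, for the injectivity of $\mapf$ I would observe that $\mapf$ changes only the third coordinate, through $Q_3\mapsto Q_3+cQ_1$; when $Q_1\ne0$ it is a nonzerodivisor, so distinct scalars $c$ yield distinct images of a fixed triple, and for a fixed $c$ the transvection $(Q_1,Q_2,Q_3)\mapsto(Q_1,Q_2,Q_3+cQ_1)$ is invertible with inverse $(Q_1,Q_2,Q_3)\mapsto(Q_1,Q_2,Q_3-cQ_1)$. The step I expect to require the most care is the height computation: one must guard against cancellation in $Q_3+cQ_1$ when $\deg(Q_3)=\deg(Q_1)$, and the resolution is exactly the observation that the first coordinate is left untouched and so continues to realize the maximal degree $\deg(Q_1)$. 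The remaining verifications are purely formal consequences of the transvection structure and the $\gcd$ identity.
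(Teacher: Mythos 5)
Your proof is correct and follows essentially the same route as the paper's: gcd preservation because each map alters one coordinate by an $\Fq[X]$-multiple of another, height preservation by checking that the maximal degree is unchanged under the stated degree hypotheses, and injectivity of $\mapf$ via the explicit inverse (replace $c$ by $-c$) together with injectivity in the scalar for a fixed triple with $Q_1\neq 0$. Your write-up is slightly more explicit than the paper's --- notably the formula $h(Q_1,Q_2,Q_3)=\max_i\deg(Q_i)-\deg\big(\gcd(Q_1,Q_2,Q_3)\big)$ covering non-coprime triples, and the remark that the untouched first coordinate guards against cancellation when $\deg(Q_3)=\deg(Q_1)$ --- but the underlying argument is identical.
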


\begin{proof}
Indeed,
\[
\mapf\big ( \mapf\big ( (Q_1,Q_2,Q_3),c \big ),-c\big )=(Q_1,Q_2,Q_3)
\]
for all $Q_1,Q_2,Q_3\in \Fq [X]$, and 
\[
\mapf\big ( ( Q_1,Q_2,Q_3),c\big ) =\mapf\big ( (Q_1,Q_2,Q_3),b\big )
\]
with $Q_1\neq 0$ implies that $c=b$. An entirely similar argument works for $\mapg$. 
For the second part,
the common divisors of two polynomials $A$ and $B$ are the common divisors of $A$ and $B+CA$ for all polynomials $C$. 
The statement regarding heights now follows since neither $\mapf$ nor $\mapg$ alter the maximum degree under the two 
respective conditions on the degree of $Q_1$. 
\end{proof}

\begin{lemma}\label{precounting}
We have 
\begin{gather*}
|S_{1,3}(m)|=(q-1)|S_1(m)|,\qquad |S_{1,2,3}(m)|=(q-1)|S_{1,2}(m)|,\\
|T_{1,3}(m,l)|=(q-1)|T_1(m,l)|,\qquad |T_{1,2,3}(m,l)|=(q-1)|T_{1,2}(m,l)|,\quad\mbox{and}\\
|T'_{1,2}(m,l)|=(q-1)|T'_1(m,l)|.
\end{gather*}
\end{lemma}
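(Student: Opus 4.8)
The plan is to prove each of the five identities by exhibiting a bijection between the product of the smaller family with $\lp\Fq\rp^{\times}$ and the larger family, where in every case the larger index set $R'$ is obtained from $R$ by adjoining a single missing index $i$. The map sends a pair $\lp(A_1,A_2,A_3),c\rp$ to the triple obtained by adding $c$ times an entry of degree $m$ to the entry $A_i$ (whose degree is $<m$), which raises $\deg(A_i)$ to $m$ and so upgrades the degree profile from $R$ to $R'$. Conversely, from a triple in the larger family one recovers $c$ as the ratio of the degree-$m$ coefficient of the modified entry to the leading coefficient of the reference entry — nonzero because $m\geq1$ — and subtracts to recover the preimage; this gives injectivity and surjectivity at once. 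Relative primality is preserved throughout by \cref{SolTranslate}, so the image lies in the correct family. For the $T$- and $T'$-families it then remains only to check that a witnessing solution of \cref{1} is transported to an admissible one, which is exactly what the height claim of \cref{SolTranslate} governs.

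For the two $S$-identities and the two $T$-identities the entry of degree $m$ is $A_1$ and the raised entry is $A_3$, so the map on the triple is precisely $\mapf\lp(A_1,A_2,A_3),c\rp=(A_1,A_2,A_3+cA_1)$. For $S_{1,3}(m)$ and $S_{1,2,3}(m)$ there is no solution to track, and \cref{SolTranslate} together with the leading-coefficient bookkeeping completes the argument. For $T_{1,3}(m,l)$ and $T_{1,2,3}(m,l)$ I would use the identity $A_3=(A_3+cA_1)-cA_1$ to see that a solution $(P_1,P_2,P_3)$ of \cref{1} for $(A_1,A_2,A_3)$ corresponds to the solution $(P_1-cP_3,P_2,P_3)=\mapg\lp(P_1,P_2,P_3),c\rp$ for the image triple. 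Since membership in $T(m,l)$ demands a relatively prime height-$l$ solution with $\deg(P_2)=l$, and since $\mapg$ leaves $P_2$ fixed, the condition $\deg(P_2)=l$ persists; moreover $\deg(P_1)\leq l=\deg(P_2)\leq\max\lb\deg(P_2),\deg(P_3)\rb$, so the height-preservation hypothesis of \cref{SolTranslate} for $\mapg$ holds and the transported solution again has height $l$. Thus $\mapf$ carries $T_1(m,l)$ bijectively onto $T_{1,3}(m,l)$ and $T_{1,2}(m,l)$ onto $T_{1,2,3}(m,l)$, accounting for the $q-1$ choices of $c$.

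The final identity $|T'_{1,2}(m,l)|=(q-1)|T'_1(m,l)|$ is the one requiring extra care, and I expect it to be the main obstacle. Here the raised entry must be $A_2$ rather than $A_3$: indeed $T'_R(m,l)$ is empty whenever $3\in R$ (as noted in the proof of \cref{Thunder1}), so the native maps $\mapf$ and $\mapg$, which only ever shift mass between coordinates $1$ and $3$, cannot be applied verbatim. I would instead use the coordinate-$2$ analogue $(A_1,A_2,A_3)\mapsto(A_1,A_2+cA_1,A_3)$, whose effect on a solution of \cref{1} is $(P_1,P_2,P_3)\mapsto(P_1-cP_2,P_2,P_3)$; both maps satisfy the exact analogues of \cref{SolTranslate}, obtained by interchanging the roles of the second and third coordinates in its proof. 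The crux is that the inequality defining $T'$, namely $\deg(P_3)=l>\max\lb\deg(P_1),\deg(P_2)\rb$, must survive and not merely the height: since $\deg(cP_2)<l$ one has $\deg(P_1-cP_2)\leq\max\lb\deg(P_1),\deg(P_2)\rb<l$, so $P_3$ stays strictly dominant and the transported solution witnesses membership in $T'_{1,2}(m,l)$. The same leading-coefficient recovery as before supplies injectivity and surjectivity; the one genuinely new point is the need to preserve a strict inequality rather than a degree equality, which is why I single out this identity as the main difficulty.
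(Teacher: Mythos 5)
Your proposal is correct and follows essentially the same route as the paper: the bijections $\lp(A_1,A_2,A_3),c\rp\mapsto(A_1,A_2,A_3+cA_1)$ and its coordinate-$2$ analogue $(A_1,A_2,A_3)\mapsto(A_1,A_2+cA_1,A_3)$, with the induced maps $(P_1,P_2,P_3)\mapsto(P_1-cP_3,P_2,P_3)$ and $(P_1,P_2,P_3)\mapsto(P_1-cP_2,P_2,P_3)$ on solution triples, are exactly the maps $\mapf,\mapg,\phi',\gamma'$ the paper uses together with \cref{SolTranslate}. The only cosmetic difference is that you verify the degree hypothesis for the solution map directly from the membership conditions of $T$ and $T'$, whereas the paper invokes the general observation that $\deg(A_1)\ge\max\lb\deg(A_2),\deg(A_3)\rb$ forces $\deg(P_1)\le\max\lb\deg(P_2),\deg(P_3)\rb$ for every solution.
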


\begin{proof}
If we fix a $c\in\lp\Fq\rp^{\times}$ and let $\mapf$ act on a coefficient triple $(A_1,A_2,A_3)$ defining 
a two-dimensional subspace of $F^3$ via \cref{1}, then the resulting two-dimensional subspace is given 
by the image of $\mapg$ (with the same $c$) acting on the original solution space.
Further, if 
\[
\deg{(A_1)}\ge\max\{\deg{(A_2)},\deg{(A_3)}\},
\]
then any polynomial solution $(P_1,P_2,P_3)$ to \cref{1} satisfies
\[
\deg{(P_1)}\le\max\{\deg{(P_2)},\deg{(P_3)}\}.
\]

We now see by \cref{SolTranslate} that, when restricted to $S_1(m)\times\lp\Fq\rp^\times$, 
$S_{1,2}(m)\times\lp\Fq\rp^\times$, $T_1(m,l)\times\lp\Fq\rp^\times$, and $T_{1,2}(m,l)\times\lp\Fq\rp ^\times$,
the function $\mapf$ has image 
$S_{1,3}(m)$, $S_{1,2,3}(m)$, $T_{1,3}(m,l)$, and $T_{1,2,3}(m,l)$, respectively. 
This together with \cref{Thunder1} completes the proof of the first four equations.

An entirely similar argument with 
\[
\phi '\big ( (Q_1,Q_2,Q_3),c\big ) = (Q_1,Q_2+cQ_1,Q_3)
\]
and
\[
\gamma '\big ( (Q_1,Q_2,Q_3),c\big ) = (Q_1-cQ_2,Q_2,Q_3)
\]
proves the last equation and completes the proof.
\end{proof}

\begin{prop}\label{Scounting}
For any integer $m\ge 0$ and all distinct $i,j\in\{1,2,3\}$
\[
|S(m)|=(q-1)N_{F^3}(m)=
\begin{cases}
q^{3m-2}(q^3-1)(q^2-1)&\text{if $m>0$,}\\
(q^3-1)&\text{if $m=0,$}
\end{cases}
\]
\[
|S_i(m)|=\frac{|S(m)|}{1+q+q^2},\qquad
|S_{i,j}(m)|=\frac{(q-1)|S(m)|}{1+q+q^2},
\]
and 
\[
|S_{1,2,3}(m)|=\frac{(q-1)^2|S(m)|}{1+q+q^2}.
\]

Fix an integer $l\ge 0$.
If $m> 2l$ then
\[
|S(m,l)|=q^{2m-l-1}(q-1)(q^2-1)N_{F^3}(l)=
\begin{cases}
q^{2(m+l)-3}(q^3-1)(q^2-1)^2&\text{if $l>0$,}\\
q^{2m-1}(q^3-1)(q^2-1)&\text{if $l=0$,}
\end{cases}
\]
\[
|S_i(m,l)|=\frac{|S(m,l)|}{1+q+q^2},\qquad
|S_{i,j}(m,l)|=\frac{(q-1)|S(m,l)|}{1+q+q^2},
\]
and 
\[
|S_{1,2,3}(m,l)|=\frac{(q-1)^2|S(m,l)|}{1+q+q^2}
\]
for all distinct $i,j\in \{1,2,3\}$.

Finally, if $m=2l>0$ then
\[
|S_R(m,l)|=\frac{|S_R(m)|(q-1)}{q}
\]
For all subsets $R\subseteq\{ 1,2,3\}$.
\end{prop}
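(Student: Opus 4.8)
The plan is to reduce the statement to two inputs: a characterization of membership in $S(m,l)$ when $m=2l$ in terms of the successive minima of the solution space, and the observation that the symmetries already exploited in \cref{Thunder1} and \cref{precounting} respect this membership. Throughout, for a triple $(A_1,A_2,A_3)\in S(m)$ let $V\subset F^3$ denote its $2$-dimensional solution space, so that $h(V)=\max\{\deg(A_1),\deg(A_2),\deg(A_3)\}=m$ by \cref{heightexample}. First I would record the elementary fact, valid for any $2$-dimensional $V$ with $\mu_1(V)<\mu_2(V)$, that the minimal one-dimensional subspace $W_1$ of height $\mu_1(V)$ is projectively unique and that every one-dimensional subspace $W\neq W_1$ has height $h(W)\ge\mu_2(V)$. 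Indeed, if $w$ spans such a $W$ and $v_1$ spans $W_1$, then $\{v_1,w\}$ is a basis of $V$, whence $\mu_2(V)\le\max\{\mu_1(V),h(W)\}$, forcing $h(W)\ge\mu_2(V)$ since $\mu_1(V)<\mu_2(V)$. Combined with Minkowski's Theorem \cref{mink}, which gives $\mu_1(V)+\mu_2(V)=h(V)=2l$ and hence $\mu_1(V)\le l\le\mu_2(V)$, this shows that $V$ contains a one-dimensional subspace of height exactly $l$ if and only if $\mu_1(V)=\mu_2(V)=l$. In other words, for $m=2l$ a triple lies in $S(m,l)$ precisely when $\mu_1(V)=l$.

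Next I would observe that the maps $\mapf$ and $\mapg$ of \cref{SolTranslate}, together with the coordinate permutations used in \cref{Thunder1}, all preserve this condition. The permutations act on $F^3$ by height-preserving isomorphisms, so they preserve $\mu_1(V)$ and hence carry $S_i(m,l)$ onto $S_{\sigma(i)}(m,l)$ and $S_{i,j}(m,l)$ onto $S_{\sigma(i),\sigma(j)}(m,l)$. For $\mapf$, recall from the proof of \cref{precounting} that letting $\mapf$ act on a coefficient triple corresponds to letting the linear map $\mapg$ act on its solution space, and that on $S_1(m)$ and $S_{1,2}(m)$ every solution $(P_1,P_2,P_3)$ satisfies $\deg(P_1)\le\max\{\deg(P_2),\deg(P_3)\}$; by \cref{SolTranslate} the map $\mapg$ then preserves the height of every such solution and is a bijection of the solution space, so it preserves $\mu_1(V)$. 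Consequently $\mapf$ restricts to a bijection between the $\mu_1=l$ loci, and the relations of \cref{precounting} hold verbatim with $S_R(m)$ replaced by $S_R(m,l)$, namely $|S_{1,3}(m,l)|=(q-1)|S_1(m,l)|$ and $|S_{1,2,3}(m,l)|=(q-1)|S_{1,2}(m,l)|$, along with the equalities among the single- and two-index classes. Comparing these with the corresponding identities for $S_R(m)$ recorded in \cref{Scounting} shows that the ratio $|S_R(m,l)|/|S_R(m)|$ is the same for every nonempty $R\subseteq\{1,2,3\}$; since $S(m)$ and $S(m,l)$ are the disjoint unions of their $R$-pieces, this common ratio equals $|S(m,l)|/|S(m)|$.

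It therefore remains to compute the single ratio $|S(m,l)|/|S(m)|$, and for this I would stratify the subspaces $V$ with $h(V)=2l$ by the value $\mu_1(V)=l'\in\{0,1,\dots,l\}$. By the first paragraph the triples with $\mu_1(V)=l$ are exactly $S(m,l)$, and $|S(m)|=(q-1)N_{F^3}(2l)$, so it suffices to show $\sum_{l'=0}^{l-1}M(l')=\tfrac1q N_{F^3}(2l)$, where $M(l')$ is the number of $V$ with $h(V)=2l$ and $\mu_1(V)=l'$. To evaluate $M(l')$ for $l'<l$ I would double count pairs $(W,V)$ with $h(W)=l'$, $W\subset V$, and $h(V)=2l$: by the height dichotomy of the first paragraph a subspace $V$ with $\mu_1(V)=l'<l$ has a unique one-dimensional subspace of height $l'$ while any $V$ with $\mu_1(V)\neq l'$ has none, so the number of such pairs equals $M(l')$; on the other hand it equals $N_{F^3}(l')\cdot N_{F^3/W}(2l-l')$ by \cref{2} and \cref{lkheight}, the latter count being independent of the chosen $W$ of height $l'$. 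Substituting the closed forms from \cref{2} and \cref{lkheight} and summing the resulting geometric series over $0\le l'\le l-1$ yields $\sum_{l'}M(l')=\tfrac1q N_{F^3}(2l)$, whence $|S(m,l)|=\tfrac{q-1}{q}|S(m)|$ and the claimed formula for every $R$ follows.

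I expect the main obstacle to be the bookkeeping in the last paragraph: one must be sure that the double count records each $V$ with $\mu_1(V)=l'$ exactly once and records no $V$ with a strictly smaller first minimum, which is exactly what the height dichotomy guarantees, and then verify that the geometric sum collapses to the clean factor $1/q$. The conceptual crux, however, is the first paragraph's identification of $S(m,l)$ with the locus $\mu_1(V)=\mu_2(V)=l$; this is precisely the degenerate case $m=2l$ in which the minimal solution fails to be projectively unique, which is why it must be treated separately from the case $m>2l$ handled earlier in \cref{Scounting}.
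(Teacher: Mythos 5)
Your argument for the final clause of the proposition (the case $m=2l>0$) is correct and is essentially the argument the paper gives: both stratify the two\-dimensional subspaces of height $2l$ by the height $l'$ of their minimal one\-dimensional subspace, evaluate the strata with $l'<l$ by the pair count $N_{F^3}(l')\cdot N_{F^3/W}(2l-l')$ (which is exactly the paper's $m>2l'$ formula $|S(m,l')|=(q-1)q^{2(m-l')+l'-1}(q^2-1)N_{F^3}(l')$), and recover the top stratum by summing the geometric series; your sum does collapse to $\tfrac1q N_{F^3}(2l)$ as claimed. Your identification of $S(2l,l)$ with the locus $\mu_1(V)=\mu_2(V)=l$ via the height dichotomy is a correct elaboration of the paper's terse appeal to ``definition and \cref{mink},'' and your derivation of the individual $|S_R(m,l)|$ by showing the ratio $|S_R(m,l)|/|S_R(m)|$ is independent of $R$ --- because the coordinate permutations and the maps $\mapf,\mapg$ preserve $\mu_1$ of the solution space --- is a slightly cleaner way to dispatch what the paper waves off as ``the other cases are entirely similar.''

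However, as a proof of the proposition as stated there is a genuine gap of coverage: you prove only the last of the three blocks of formulas. The formulas for $|S(m)|$, $|S_i(m)|$, $|S_{i,j}(m)|$, $|S_{1,2,3}(m)|$ and, more importantly, the entire $m>2l$ case for $|S(m,l)|$ and its $R$\-pieces are invoked (``recorded in \cref{Scounting},'' ``handled earlier in \cref{Scounting}'') rather than derived, which is circular since they are part of the statement being proved. The repair is short and you already possess every ingredient: $|S(m)|=(q-1)N_{F^3}(m)$ because a one\-dimensional subspace of height $m$ is spanned by exactly $q-1$ relatively prime triples; for $m>2l$ Minkowski's theorem \cref{mink} makes the height\-$l$ subspace $W\subset V$ unique, so $|S(m,l)|=(q-1)\sum_W N_{F^3/W}(m-l)=(q-1)q^{2(m-l)+l-1}(q^2-1)N_{F^3}(l)$ by \cref{lkheight} and \cref{2}; and the passage to the $S_R$ pieces in both cases is the computation $|S(\cdot)|=3|S_1(\cdot)|+3|S_{1,2}(\cdot)|+|S_{1,2,3}(\cdot)|=(1+q+q^2)|S_1(\cdot)|$ from \cref{Thunder1} and \cref{precounting}, which your second paragraph already uses. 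You should make these steps explicit before relying on them.
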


\begin{proof}
As noted above, there is a $(q-1)$-to-1 correspondence between the elements of $S(m,l)$ and two-dimensional subspaces $V\subset F^3$ of height $m$ containing a one-dimensional subspace $W$ of height $l$.
Since $m>2l$, Minkowski's Theorem \cref{mink} implies that this subspace $W$ is unique to $V$. 
Therefore by \cref{lkheight} and \cref{2},
\begin{align*}
|S(m,l)|&=(q-1)
\sum_{\substack{W\subset F^3\\
\dim (W)=1\\
h(W)=l}}
{N_{F^3/W}(m-l)}\\
&=(q-1)q^{2(m-l)+l-1}(q^2-1)N_{F^3}(l).
\end{align*}
(Note that \cref{lkheight} applies since $m-l\ge 1$ if $l\ge 1$ and $m-l=m\ge 1$ if $l=0$.)
Next, by \cref{Thunder1} and \cref{precounting},
\begin{align*}
|S(m,l)| &=|S_1(m,l)|+|S_2(m,l)|+|S_3(m,l)|\\
&\qquad +|S_{1,2}(m,l)|+|S_{1,3}(m,l)|+
|S_{2,3}(m,l)|+|S_{1,2,3}(m,l)|\\
&=3|S_1(m,l)|+3|S_{1,2}(m,l)|+|S_{1,2,3}(m,l)|\\
&=3|S_1(m,l)|+(3+q-1)|S_{1,2}(m,l)|\\
&=3|S_1(m,l)|+(3+q-1)|S_{1,3}(m,l)|\\
&=\big ( 3+3(q-1)+(q-1)^2\big )|S_1(m,l)|\\
&=\big ( 1+q+q^2\big )|S_1(m,l)|.
\end{align*}
This together with \cref{2}, \cref{Thunder1}, and \cref{precounting} once more finishes the proof of the 
second part of the proposition. 

For the first part of the proposition, recall that every one-dimensional subspace of $F^3$ of height 
$m$ corresponds to $q-1$ elements of $S(m)$.
The rest of the proof follows exactly as above.

Next, suppose $m=2l>0$. Then by definition and \cref{mink} we have
\[
|S(m)|=\sum _{k=0}^l|S(m,k)|.
\]
By what we have already proven above
\[
|S(m,0)| = q^{2(m+0)-3}q^2(q^3-1)(q^2-1)= q^{2(m+0)-3}(q^3-1)(q^2-1)^2 + q^{2m -3}(q^3-1)(q^2-1).
\]
Using this together what we have already proven yields
\[
\begin{aligned}
|S(m)| & = \sum _{k=0}^{l-1}q^{2(m+k)-3}(q^3-1)(q^2-1)^2 
+ q^{2m-3}(q^3-1)(q^2-1) +|S(m,l)|\\
&=q^{2m-3}(q^3-1)(q^2-1)^2\frac{q^{2l}-1}{q^2-1}
+ q^{2m-3}(q^3-1)(q^2-1) +|S(m,l)|\\
&=q^{2m-3}(q^3-1)(q^2-1)q^{2l} +|S(m,l)|\\
&=q^{3m-3}(q^3-1)(q^2-1) +|S(m,l)|\\
&=|S(m)|q^{-1}+|S(m,l)|.
\end{aligned}
\]
This shows the case where $R=\emptyset$ of the last part of the proposition. The other cases are entirely similar.
\end{proof}

\begin{prop}\label{Tprimecounting}
Fix an integer $l\ge 0$. If $m> 2l$ then
\[
|T'(m,l)|=\frac{|S(m,l)|}{1+q+q^2}, \qquad
|T_1'(m,l)|=|T_2'(m,l)|=\frac{|S_1(m,l)|}{q+1},\qquad \mbox{and}
\]
\[
|T'_{1,2}(m,l)|=\frac{|S_{1,2}(m,l)|}{q+1}.
\]
\end{prop}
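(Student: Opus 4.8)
The plan is to reduce all three equalities to the single identity $|T'(m,l)|=|S(m,l)|/(1+q+q^2)$ and then extract the individual pieces from the symmetry and divisibility relations already in hand. First I would record, from \cref{Thunder1}, that $T'(m,l)=T_1'(m,l)\cup T_2'(m,l)\cup T_{1,2}'(m,l)$ is a disjoint union with $|T_1'(m,l)|=|T_2'(m,l)|$, and from \cref{precounting} that $|T_{1,2}'(m,l)|=(q-1)|T_1'(m,l)|$. Together these give $|T'(m,l)|=(q+1)|T_1'(m,l)|$, so once the identity for $|T'(m,l)|$ is established I can solve $|T_1'(m,l)|=|T'(m,l)|/(q+1)=|S(m,l)|/\big((q+1)(1+q+q^2)\big)$. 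Since \cref{Scounting} gives $|S_1(m,l)|=|S(m,l)|/(1+q+q^2)$ and hence $|S_{1,2}(m,l)|=(q-1)|S_1(m,l)|$, this immediately yields $|T_1'(m,l)|=|T_2'(m,l)|=|S_1(m,l)|/(q+1)$ and $|T_{1,2}'(m,l)|=(q-1)|T_1'(m,l)|=|S_{1,2}(m,l)|/(q+1)$.

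It then remains to prove $|T'(m,l)|=|S(m,l)|/(1+q+q^2)$, and here I would fiber $S(m,l)$ over the minimal line of each subspace, exactly as in the proof of \cref{Scounting}. For $m>2l$ I would first verify, via \cref{mink}, that every $(A_1,A_2,A_3)\in S(m,l)$ determines a two-dimensional $V\subset F^3$ of height $m$ whose first successive minimum is exactly $l$ and is attained by a projectively unique relatively prime solution $(P_1,P_2,P_3)$ of \cref{1}; this is precisely the remark recorded after \cref{Tdef}. Writing $W=\Span_F\{(P_1,P_2,P_3)\}$ for the resulting one-dimensional subspace of height $l$, the triple lies in $T'(m,l)$ if and only if the relatively prime generator of $W$ satisfies $\deg(P_3)=l>\max\{\deg(P_1),\deg(P_2)\}$, a condition depending only on $W$ and not on the ambient $V$.

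The key counting input is \cref{lkheight}: the number of two-dimensional $V\supset W$ of height $m$ equals $N_{F^3/W}(m-l)=q^{2m-l-1}(q^2-1)$, which depends on $W$ only through its height $l$. Since each $V$ accounts for exactly $q-1$ triples and each height-$l$ line $W$ accounts for $q-1$ relatively prime generators, summing over all height-$l$ lines $W$ gives $|S(m,l)|=(q-1)\,N_{F^3}(l)\,q^{2m-l-1}(q^2-1)$, while summing only over those $W$ whose generator has strictly dominant third coordinate replaces $N_{F^3}(l)$ by the count of such lines, which is $|S_3(l)|/(q-1)$. Taking the quotient, the common factor $q^{2m-l-1}(q^2-1)$ cancels and I am left with $|T'(m,l)|/|S(m,l)|=|S_3(l)|/|S(l)|$, which equals $1/(1+q+q^2)$ by \cref{Scounting}. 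The main obstacle is the middle step: one must argue carefully that for $m>2l$ the height-$l$ solution is genuinely unique up to scaling and coincides with the minimal vector, so that membership in $T'(m,l)$ becomes a property of the line $W$ alone; the degenerate case $l=0$, where the only admissible $W$ is $\langle(0,0,1)\rangle$ and $N_{F^3}(0)=1+q+q^2$, should be checked separately to confirm the formula persists.
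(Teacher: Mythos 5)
Your proposal is correct and follows essentially the same route as the paper: you fiber $S(m,l)$ and $T'(m,l)$ over the unique (by Minkowski, since $m>2l$) height-$l$ line $W$, apply \cref{lkheight} to see that each such line supports $N_{F^3/W}(m-l)=q^{2m-l-1}(q^2-1)$ ambient planes regardless of which coordinate dominates, and restrict the sum to lines generated by triples in $S_3(l)$ to get $|T'(m,l)|/|S(m,l)|=|S_3(l)|/|S(l)|=1/(1+q+q^2)$, after which the identities $|T'|=(q+1)|T_1'|$ and $|T_{1,2}'|=(q-1)|T_1'|$ from \cref{Thunder1} and \cref{precounting} yield the remaining equalities exactly as in the paper. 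Your extra care about the $l=0$ case is harmless and consistent with \cref{Scounting}.
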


\begin{proof}
We have noted above that any one-dimensional subspace $W\subset F^3$ has a basis element consisting of a 
relatively prime triple of polynomials $(P_1,P_2,P_3)$ and the height of $W$ is the maximum of the 
degrees of these polynomials.
This triple is unique up to multiplication by an element of $\lp\Fq\rp^\times$, thus the indices 
$i$ where $\deg (P_i)=h(W)$ are uniquely determined by $W$.

We may now evaluate $|T'(m,l)|$ in a manner very similar to $|S(m,l)|$ above. Specifically, we have
\[
|T'(m,l)|=(q-1)\cdot
\sideset{}{'}\sum_{\substack{W\subset F^3\\
\dim (W)=1\\
h(W)=l}}
{N_{F^3/W}(m-l)},
\]
where the prime indicates that we are to sum only over those $W$ spanned by a relatively prime ordered triple 
counted in $S_3(l).$ Note that there are $q-1$ 
ordered triples in this set for each such $W$.
Via \cref{2}, \cref{lkheight}, and \cref{Scounting}

\begin{align*}
|T'(m,l)|&=(q-1)\cdot
\sideset{}{'}\sum_{\substack{W\subset F^3\\
\dim (W)=1\\
h(W)=l}}
{N_{F^3/W}(m-l)}\\
&=(q-1)q^{2(m-l)+l-1}(q^2-1)\frac{|S_3(l)|}{q-1}\\
&=q^{2m-l)-1}(q^2-1)\frac{S(l)}{1+q+q^2}\\
&=q^{2m-l-1}(q^2-1)q^2\frac{(q-1)N_{F^3}(l)}{1+q+q^2}\\
&=\frac{|S(m,l)|}{1+q+q^2}.
\numberthis\label{10}
\end{align*}

Next, by \cref{Thunder1} and \cref{precounting},
\begin{align*}
|T'(m,l)|&=|T'_1(m,l)|+|T'_2(m,l)|+|T'_{1,2}(m,l)|\\
&=(q+1)|T'_1(m,l)|.
\end{align*}
The remainder of the Proposition follows from this equation, \cref{10}, \cref{Thunder1}, and \cref{precounting}.
\end{proof}

\begin{prop}\label{Tcounting}
Fix an integer $l\ge 0$. If $m> 2l$ then
\begin{gather*}
|T(m,l)|=\frac{q^2|S(m,l)|}{1+q+q^2}, \qquad
|T_1(m,l)|=|T_3(m,l)|=\frac{q|S_1(m,l)|}{(q+1)},\\
|T_{1,2}(m,l)|=|T_{2,3}(m,l)|=\frac{q|S_{1,2}(m,l)|}{q+1}, \qquad\mbox{and}\qquad
|T_{1,2,3}(m,l)|=\frac{q|S_{1,2,3}(m,l)|}{q+1}.
\end{gather*}
\end{prop}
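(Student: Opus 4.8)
The plan is to mirror the strategy used for the primed sets in \cref{Tprimecounting}, reducing every quantity in the statement to the already-computed cardinalities $|S_R(m,l)|$ of \cref{Scounting} and $|T'_R(m,l)|$ of \cref{Tprimecounting}. Two identities will drive everything. The first is the relation $T'_{1,2}(m,l)=S_{1,2}(m,l)\setminus T_{1,2}(m,l)$ already recorded in \cref{Thunder1} (valid because $m>2l$). The second is a new partition $S_1(m,l)=T_1(m,l)\sqcup T'_1(m,l)$ that I will establish below. Once both are available, the remaining cases follow purely formally from the index-shifting equalities of \cref{precounting} and the symmetry equalities of \cref{Thunder1}, with no further geometry required.

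First I would dispatch $T_{1,2}$ and $T_{1,2,3}$. Combining the partition from \cref{Thunder1} with $|T'_{1,2}(m,l)|=|S_{1,2}(m,l)|/(q+1)$ of \cref{Tprimecounting} gives $|T_{1,2}(m,l)|=|S_{1,2}(m,l)|-|S_{1,2}(m,l)|/(q+1)=q|S_{1,2}(m,l)|/(q+1)$, and $|T_{2,3}(m,l)|=|T_{1,2}(m,l)|$ by \cref{Thunder1}. Then $|T_{1,2,3}(m,l)|=(q-1)|T_{1,2}(m,l)|$ by \cref{precounting}; since $|S_{1,2,3}(m,l)|=(q-1)|S_{1,2}(m,l)|$ (also \cref{precounting}) this equals $q|S_{1,2,3}(m,l)|/(q+1)$, as claimed.

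The main step is proving the new partition, and I expect the leading-term cancellation argument to be the crux, exactly paralleling the reasoning that yields $T'_{1,2}=S_{1,2}\setminus T_{1,2}$ in \cref{Thunder1}. Fix $(A_1,A_2,A_3)\in S_1(m,l)$, so $\deg(A_1)=m>\max\{\deg(A_2),\deg(A_3)\}$, and let $(P_1,P_2,P_3)$ be the projectively unique primitive solution of \cref{1} of height $l$ (unique since $m>2l$). I claim $\deg(P_1)<l$: were $\deg(P_1)=l$, then $\deg(A_1P_1)=m+l$ would strictly exceed $\deg(A_iP_i)\le(m-1)+l$ for $i=2,3$, so the leading term of $A_1P_1$ could not cancel and $(P_1,P_2,P_3)$ would fail to solve \cref{1}. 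Hence $l=\max\{\deg(P_2),\deg(P_3)\}$, and exactly one of the mutually exclusive alternatives $\deg(P_2)=l$ or $\deg(P_3)=l>\deg(P_2)$ holds; by \cref{Tdef} and \cref{Rdef} the former places the triple in $T_1(m,l)$ and the latter in $T'_1(m,l)$. This yields the disjoint union $S_1(m,l)=T_1(m,l)\sqcup T'_1(m,l)$.

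Finally I would assemble the results. From this partition and $|T'_1(m,l)|=|S_1(m,l)|/(q+1)$ of \cref{Tprimecounting} I get $|T_1(m,l)|=q|S_1(m,l)|/(q+1)$, and $|T_3(m,l)|=|T_1(m,l)|$ by \cref{Thunder1}. For the total, the disjoint union of \cref{Thunder1} together with $|T_{1,3}|=(q-1)|T_1|$ and $|T_{1,2,3}|=(q-1)|T_{1,2}|$ from \cref{precounting} gives
\[
|T(m,l)|=(q+1)\bigl(|T_1(m,l)|+|T_{1,2}(m,l)|\bigr)=q\bigl(|S_1(m,l)|+|S_{1,2}(m,l)|\bigr).
\]
Substituting $|S_1(m,l)|=|S(m,l)|/(1+q+q^2)$ and $|S_{1,2}(m,l)|=(q-1)|S(m,l)|/(1+q+q^2)$ from \cref{Scounting} collapses the parenthetical sum to $q|S(m,l)|/(1+q+q^2)$, yielding $|T(m,l)|=q^2|S(m,l)|/(1+q+q^2)$ and completing the proof.
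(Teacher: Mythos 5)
Your proof is correct, but it reverses the logical order of the paper's argument and replaces its key computation with a more elementary one. The paper computes $|T(m,l)|$ \emph{first}, by rerunning the adelic counting machinery: it sums $N_{F^3/W}(m-l)$ over the one-dimensional subspaces $W$ of height $l$ spanned by triples in $S_2(l)\cup S_{1,2}(l)\cup S_{2,3}(l)\cup S_{1,2,3}(l)$, invoking \cref{lkheight} and \cref{Scounting} to get $|T(m,l)|=q^2|S(m,l)|/(1+q+q^2)$; it then extracts $|T_{1,2}(m,l)|$ from the relation $T'_{1,2}=S_{1,2}\setminus T_{1,2}$ and finally solves $|T(m,l)|=(q+1)\bigl(|T_1(m,l)|+|T_{1,2}(m,l)|\bigr)$ for $|T_1(m,l)|$. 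You instead determine the individual pieces first and assemble $|T(m,l)|$ last, which requires no new geometry at all. The one genuinely new ingredient you supply is the partition $S_1(m,l)=T_1(m,l)\sqcup T'_1(m,l)$, proved by the leading-term cancellation argument forcing $\deg(P_1)<l$; this is valid (the projective uniqueness of the height-$l$ solution for $m>2l$ guarantees the two alternatives are exclusive and exhaustive), and in fact the same observation appears in the paper, only later, inside the proofs of \cref{secondone} and \cref{thirdone}. Your route buys a shorter, purely combinatorial derivation that never touches \cref{lkheight} a second time; the paper's route has the advantage of computing $|T(m,l)|$ independently, so the decomposition identity serves as a consistency check rather than the engine of the proof. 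One small citation point: the relation $|S_{1,2,3}(m,l)|=(q-1)|S_{1,2}(m,l)|$ that you attribute to \cref{precounting} is stated there only for the sets $S_R(m)$; it is cleanest to read it off directly from the formulas in \cref{Scounting}, which is what you need and is not a gap.
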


\begin{proof}
We argue almost exactly as above in the proof of \cref{Tprimecounting}, only this time
\[
|T(m,l)|=(q-1)\cdot
\sideset{}{'}\sum_{\substack{W\subset F^3\\
\dim (W)=1\\
h(W)=l}}
{N_{F^3/W}(m-l)},
\]
where the prime indicates that we are to sum only over those $W$ spanned by a relatively prime ordered triple 
counted in the union $S_2(l)\cup S_{1,2}(l)\cup S_{2,3}(l)\cup S_{1,2,3}(l)$. 
Via \cref{2}, \cref{lkheight} and \cref{Scounting}

\begin{align*}
|T(m,l)|&=(q-1)\cdot
\sideset{}{'}\sum_{\substack{W\subset F^3\\
\dim (W)=1\\
h(W)=l}}
{N_{F^3/W}(m-l)}\\
&=(q-1)q^{2(m-l)+l-1}(q^2-1)\frac{\big ( |S_2(l)|+|S_{1,2}(l)|+|S_{2,3}(l)|+|S_{1,2,3}(l)|\big )}{q-1}\\
&=q^{2m-l)-1}(q^2-1)\frac{|S(l)|}{1+q+q^2}( 1+2(q-1)+(q-1)^2)\\
&=q^{2m-l-1}(q^2-1)q^2\frac{(q-1)N_{F^3}(l)}{1+q+q^2}\\
&=\frac{q^2}{1+q+q^2}|S(m,l)|.
\numberthis\label{13}
\end{align*}

Next, by \cref{Thunder1} and \cref{Tprimecounting}
\begin{align*}
\frac{(q-1)|S(m,l)|}{(q+1)(1+q+q^2)} &= |T'_{1,2}(m,l)|\\
&= |S_{1,2}(m,l)|-|T_{1,2}(m,l)|\\
&= \frac{(q-1)|S(m,l)|}{1+q+q^2} - |T_{1,2}(m,l)|,
\end{align*}
so that
\[ 
|T_{1,2}(m,l)|=\frac{q(q-1)|S(m,l)|}{(q+1)(1+q+q^2}.
\]

Using this together with \cref{Thunder1} and \cref{precounting} yields
\begin{align*}
|T(m,l)|&=|T_1(m,l)|+|T_3(m,l)|+|T_{1,3}(m,l)|+|T_{1,2}(m,l)|+|T_{2,3}(m,l)|+|T_{1,2,3}(m,l)|\\
&=(q+1)|T_1(m,l)|+(q+1)|T_{1,2}(m,l)|\\
&=(q+1)|T_1(m,l)|+\frac{q(q-1)|S(m,l)|}{1+q+q^2}.
\end{align*}
The remainder of the Proposition follows from this equation, \cref{13}, \cref{Thunder1}, \cref{precounting}, and
\cref{Scounting}.
\end{proof}

\subsection{Main counting results}\label{main}

\begin{thm}\label{firstone}
Fix integers $m, l\ge 0$. Let $N_1(m,l)$ denote the number of ordered triples 
$(A_1,A_2,A_3)$ of polynomials such that:
\begin{itemize}
\item
$A_1,A_2,A_3$ are relatively prime;
\item
$A_1$ is monic of degree $m$;
\item
the degrees of both $A_2$ and $A_3$ are no
greater than $m$; and
\item
there is a solution to \cref{1} of height $l$.
\end{itemize}
If $m>2l$ then
\[
N_1(m,l)
=\begin{cases}
(q^2-1)^2q^{2(m+l)-1}&\text{if $l>0$,}\\
(q^2-1)q^{2m+1}&\text{if $l=0$.}
\end{cases}
\]
If $m=2l>0$ then
\[
N_1(m,l)=(q-1)(q^2-1)q^{3m-1}.
\]

Further, suppose $m>2l\ge 0$ and let $N_1'(m,l)$ denote the number of those ordered triples 
as above but where 
the solution $(P_1,P_2,P_3)$
of height $l$ satisfies $\deg (P_2)<l$. Then
\[
N_1'(m,l)= \frac{1}{q+1}N_1(m,l)=
\begin{cases}
(q^2-1)(q-1)q^{2(m+l)-1}&\text{if $l>0,$}\\
(q-1)q^{2m+1}&\text{if $l=0.$}
\end{cases}
\]
\end{thm}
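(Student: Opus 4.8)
The plan is to reduce both counts to the set-cardinalities already established in \cref{Scounting,Tcounting,precounting}. The triples counted by $N_1(m,l)$ are exactly those in $\bigcup_{R\ni 1}S_R(m,l)$ --- relatively prime triples of maximal degree $m$ admitting a height-$l$ solution to \cref{1}, with $\deg(A_1)=m$ --- subject to the normalization that $A_1$ be monic. Since scaling $(A_1,A_2,A_3)\mapsto(cA_1,cA_2,cA_3)$ by $c\in(\Fq)^\times$ preserves relative primality, all three degrees, and the solution space of \cref{1} (hence the height and every property of a height-$l$ solution), and since $\deg(A_1)=m$ forces a nonzero leading coefficient, each $(\Fq)^\times$-orbit contains a unique monic-$A_1$ representative. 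Thus
\[
N_1(m,l)=\tfrac{1}{q-1}\big(|S_1(m,l)|+|S_{1,2}(m,l)|+|S_{1,3}(m,l)|+|S_{1,2,3}(m,l)|\big).
\]

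For $m>2l$ I would evaluate this using \cref{Scounting}: the four summands are $\tfrac{|S(m,l)|}{1+q+q^2}$ times $1,(q-1),(q-1),(q-1)^2$, and the identity $1+2(q-1)+(q-1)^2=q^2$ collapses the sum to $\tfrac{q^2}{1+q+q^2}|S(m,l)|$. Dividing by $q-1$ and inserting the explicit $|S(m,l)|$ from \cref{Scounting}, the factor $q^3-1=(q-1)(1+q+q^2)$ cancels the denominator to leave $(q^2-1)^2q^{2(m+l)-1}$ if $l>0$ and $(q^2-1)q^{2m+1}$ if $l=0$. For $m=2l>0$ the same identity applied to the last part of \cref{Scounting}, $|S_R(m,l)|=\tfrac{q-1}{q}|S_R(m)|$, together with the first part, yields $\sum_{R\ni1}|S_R(m,l)|=\tfrac{q(q-1)}{1+q+q^2}|S(m)|$; substituting $|S(m)|=q^{3m-2}(q^3-1)(q^2-1)$ and dividing by $q-1$ produces $(q-1)(q^2-1)q^{3m-1}$.

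For $N_1'(m,l)$, note that when $m>2l$ Minkowski's Theorem \cref{mink} makes the height-$l$ solution $(P_1,P_2,P_3)$ projectively unique, so $\deg(P_2)<l$ is well-defined and, as $\deg(P_2)\le l$ always, is the exact negation of $\deg(P_2)=l$. By \cref{Tdef} the latter condition cuts out $T(m,l)$ inside $S(m,l)$, so a monic-normalized triple is counted by $N_1'(m,l)$ precisely when it lies in $\bigcup_{R\ni1}(S_R(m,l)\setminus T_R(m,l))$, giving
\[
N_1'(m,l)=\tfrac{1}{q-1}\sum_{R\ni1}\big(|S_R(m,l)|-|T_R(m,l)|\big).
\]
The clean step is that $|T_R(m,l)|=\tfrac{q}{q+1}|S_R(m,l)|$ holds uniformly for all four $R\ni1$: for $R=\{1\},\{1,2\}$ this is \cref{Tcounting} directly, and for $R=\{1,3\},\{1,2,3\}$ it follows by dividing the relations $|T_{1,3}|=(q-1)|T_1|$, $|T_{1,2,3}|=(q-1)|T_{1,2}|$ of \cref{precounting} by the corresponding relations $|S_{1,3}|=(q-1)|S_1|$, $|S_{1,2,3}|=(q-1)|S_{1,2}|$ coming from \cref{Scounting}. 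Hence each difference is $\tfrac{1}{q+1}|S_R(m,l)|$ and the whole sum is $\tfrac{1}{q+1}$ of the $N_1$-sum, so $N_1'(m,l)=\tfrac{1}{q+1}N_1(m,l)$; the closed forms then follow from $\tfrac{1}{q+1}(q^2-1)^2=(q-1)(q^2-1)$ and $\tfrac{1}{q+1}(q^2-1)=q-1$.

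The part I expect to require the most care is the identification underlying $N_1'$: one must verify that $T_R(m,l)\subseteq S_R(m,l)$ index-by-index (so the set differences are honest), that $T_2(m,l)$ and the $T'$-sets contribute nothing here, and --- most importantly --- that the projective uniqueness of the height-$l$ solution is genuinely needed, since it is exactly what fails at $m=2l$ and thereby explains why the primed statement is restricted to $m>2l$.
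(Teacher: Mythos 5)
Your proposal is correct and follows essentially the same route as the paper: the paper likewise writes $N_1(m,l)=\tfrac{1}{q-1}\bigl(|S_1(m,l)|+|S_{1,2}(m,l)|+|S_{1,3}(m,l)|+|S_{1,2,3}(m,l)|\bigr)$, collapses the sum via $1+2(q-1)+(q-1)^2=q^2$ and \cref{Scounting}, and obtains $N_1'(m,l)=N_1(m,l)\bigl(1-\tfrac{q}{q+1}\bigr)$ from \cref{Tcounting}. Your explicit verification that $|T_R(m,l)|=\tfrac{q}{q+1}|S_R(m,l)|$ for $R=\{1,3\},\{1,2,3\}$ via \cref{precounting}, and your spelled-out treatment of the $m=2l>0$ case (which the paper dismisses as ``entirely similar''), are just fuller renderings of the same argument.
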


\begin{proof}
Since we demand that $A_1$ is monic, in the notation of the previous subsection we have
\[
N_1(m,l)
=\frac{1}{q-1}\big ( |S_1(m ,l)|
+|S_{1,2}(m ,l)|
+|S_{1,3}(m ,l)|
+|S_{1,2,3}(m ,l)|\big ) .
\]
We now apply \cref{Scounting}. If $m>2l$ we have
\[
\begin{aligned}
N_1(m,l)&=
\frac{1}{q-1}\big ( |S_1(m,l)|+|S_{1,2}(m,l)|+|S_{1,3}(m,l)+|S_{1,2,3}(m,l)|\big ) \\
&=\frac{1+2(q-1)+(q-1)^2}{(q-1)(1+q+q^2)}|S(m,l)|\\
&=\frac{q^2}{q^3-1)}|S(m,l)|\\
&=
\begin{cases}
(q^2-1)^2q^{2(m+l)-1}&\text{if $l>0,$}\\
(q^2-1)q^{2m+1}&\text{if $l=0$.}
\end{cases}
\end{aligned}
\]
The case $m=2l$ is entirely similar.

The second part we have
\begin{align*}
N_1'(m,l)
&=\frac{1}{q-1}\big ( |S_1(m ,l)|-|T_1(m,l)|
+|S_{1,2}(m,l)|-|T_{1,2}(m,l)|\\
&\qquad +|S_{1,3}(m,l)|-|T_{1,3}(m,l)|
+|S_{1,2,3}(m,l)|-|T_{1,2,3}(m,l)|\big )\\
&=N_1(m,l)\left ( 1 -\frac{q}{q+1}\right )\\
&=\frac{1}{q+1}N_1(m,l)
\end{align*}
by \cref{Tcounting} and our expression for $N_1(m,l)$ above. 
\end{proof}

\begin{thm}\label{secondone}
Fix integers $m,l\ge 0$. Let $N_2(m,l)$ denote the number
of ordered triples $(A_1,A_2,A_3)$ of polynomials such that:
\begin{itemize}
\item
$A_1,A_2,A_3$ are relatively prime;
\item
$A_1$ is monic of degree $m$;
\item
the degrees of both $A_2$ and $A_3$ are strictly less than $m$; and
\item
there is a solution to \cref{1} of height $l$.
\end{itemize}
If $m> 2l$ then
\[
N_2(m,l)
=\begin{cases}
(q^2-1)^2q^{2(m+l)-3}&\text{if $l>0$,}\\
(q^2-1)q^{2m-1}&\text{if $l=0$.}
\end{cases}
\]
If $m=2l>0$ then
\[
N_2(m,l)=(q-1)(q^2-1)q^{3m-3}.
\]

Further, suppose $m>2l\ge 0$ and let $N_2'(m,l)$ denote the number of those triples as above but where the 
solution $(P_1,P_2,P_3)$
satisfies $\deg (P_2)<l.$ Then
\[
N_2'(m,l)= \frac{1}{q+1}N_2(m,l)=
\begin{cases}
(q-1)(q^2-1)q^{2(m+l)-3}&\text{if $l>0$,}\\
(q-1)q^{2m-1}&\text{if $l=0$.}
\end{cases}
\]
\end{thm}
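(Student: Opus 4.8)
The plan is to mirror the proof of \cref{firstone} almost verbatim, exploiting the single structural difference between the two statements: in \cref{firstone} the degrees of $A_2$ and $A_3$ are allowed to equal $m$, whereas here they are required to be strictly smaller. Since $A_1$ is monic of degree $m$, this strict inequality forces $A_1$ to be the unique entry of maximal degree, i.e.\ in the notation of \cref{Rdef} we have $R=\{1\}$. Thus, whereas the proof of \cref{firstone} summed the four sets $S_1, S_{1,2}, S_{1,3}, S_{1,2,3}$, here only a single term survives, and the requirement that $A_1$ be monic contributes the usual factor $1/(q-1)$, so that
\[
N_2(m,l)=\frac{1}{q-1}\,|S_1(m,l)|.
\]

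First I would evaluate $|S_1(m,l)|$ by invoking \cref{Scounting}. When $m>2l$, that proposition gives $|S_1(m,l)|=|S(m,l)|/(1+q+q^2)$, so that $N_2(m,l)=|S(m,l)|/\big((q-1)(1+q+q^2)\big)=|S(m,l)|/(q^3-1)$; substituting the two cases ($l>0$ and $l=0$) of the formula for $|S(m,l)|$ and cancelling the factor $q^3-1$ yields the claimed values $(q^2-1)^2q^{2(m+l)-3}$ and $(q^2-1)q^{2m-1}$, respectively. The boundary case $m=2l>0$ is handled by the last part of \cref{Scounting}, which gives $|S_1(m,l)|=(q-1)|S_1(m)|/q$; feeding in the value of $|S_1(m)|$ from the first part of \cref{Scounting} and dividing by $q-1$ produces $(q-1)(q^2-1)q^{3m-3}$.

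For the final assertion concerning $N_2'(m,l)$, the point is that when $m>2l$ Minkowski's Theorem \cref{mink} makes the height-$l$ solution $(P_1,P_2,P_3)$ projectively unique, so the condition $\deg(P_2)<l$ is equivalent to the triple lying outside $T(m,l)$. Intersecting with the constraint $R=\{1\}$, the triples we wish to count are exactly those in $S_1(m,l)\setminus T_1(m,l)$, whence
\[
N_2'(m,l)=\frac{1}{q-1}\big(|S_1(m,l)|-|T_1(m,l)|\big).
\]
Now \cref{Tcounting} supplies $|T_1(m,l)|=q\,|S_1(m,l)|/(q+1)$, so the parenthetical expression equals $|S_1(m,l)|/(q+1)$, giving $N_2'(m,l)=N_2(m,l)/(q+1)$ and hence the stated closed forms.

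The bookkeeping is entirely routine; the only points demanding care are the correct identification of the set-theoretic conditions---namely that the strict degree inequality corresponds precisely to $R=\{1\}$ rather than to any larger $R$, and that $\deg(P_2)<l$ corresponds to the complement of $T_1(m,l)$ inside $S_1(m,l)$---together with the separate treatment of the degenerate case $m=2l$, where the successive-minima structure collapses and one must appeal to the final clause of \cref{Scounting} rather than to its generic formula. I expect this last case to be the main (if minor) obstacle, since it is the only place where the argument genuinely departs from a direct transcription of the proof of \cref{firstone}.
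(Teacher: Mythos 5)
Your proposal is correct and follows essentially the same route as the paper: $N_2(m,l)=\frac{1}{q-1}|S_1(m,l)|$ evaluated via \cref{Scounting} (including the separate appeal to its final clause when $m=2l$), with the second count reduced to the complement of $T_1(m,l)$ inside $S_1(m,l)$. The only cosmetic difference is that the paper identifies that complement as $T'_1(m,l)$ and cites \cref{Tprimecounting} directly, whereas you subtract $|T_1(m,l)|$ using \cref{Tcounting}; since the latter is derived from the former, the two computations are interchangeable.
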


\begin{proof}
Since we demand that $A_1$ is monic, in the notation of the previous section we have
\[
\begin{aligned}
N_2(m,l)&=\frac{1}{q-1}|S_1(m,l)|\\
&=\frac{1}{(q-1)(1+q+q^2)}|S(m,l)|\\
&=\begin{cases}
\frac{(q^3-1)(q^2-1)(q+1)}{1+q+q^2}q^{2(m+l)-3}&\text{if $l>0$,}\\
\frac{(q+1)(q^3-1)}{1+q+q^2}q^{2m-1}&\text{if $l=0$,}
\end{cases} \\
&=\begin{cases}
(q^2-1)^2q^{2(m+l)-3}&\text{if $l>0$,}\\
(q^2-1)q^{2m-1}&\text{if $l=0$,}
\end{cases}
\end{aligned}
\]
by \cref{Scounting}

For the second part, note that since the degree of $A_1$ is strictly larger than the degrees of both
$A_2$ and $A_3$, any solution $(P_1,P_2,P_3)$ of height $l$ to \cref{1} with $\deg (P_2)<l$ necessarily
must have $\deg (P_3)=l$ strictly larger than the degrees of both $P_1$ and $P_2$. Thus
\[
\begin{aligned}
N_2'(m,l)&=\frac{1}{q-1}|T'_1(m,l)| \\
&=\frac{1}{q^2-1}|S_1(m,l)|
\end{aligned}
\]
by \cref{Tprimecounting}. 
\end{proof}

\begin{thm}\label{thirdone}
Fix integers $m,l\ge 0$. Let
$N_3(m,l)$ denote the number
of ordered triples $(A_1,A_2,A_3)$ of polynomials such that:
\begin{itemize}
\item
$A_1,A_2,A_3$ are relatively prime;
\item
$A_1$ is monic of degree $m$;
\item
$\deg (A_2)\le m$;
\item
$\deg (A_3)<m$; and
\item
there is a solution to \cref{1} of height $l$.
\end{itemize}
If $m>2l$ then
\[
N_3(m,l)=
\begin{cases}
(q^2-1)^2q^{2(m+l)-2}&\text{if $l>0,$}\\
(q^2-1)q^{2m}&\text{if $l=0.$}
\end{cases}
\]
If $m=2l>0$ then
\[
N_3(m,l)= (q-1)(q^2-1)q^{3m-2}.
\]

Further, suppose $m>2l\ge 0$ and let $N_3'(m,l)$ denote the number of those triples above but 
where the solution $(P_1,P_2,P_3)$
satisfies $\deg (P_2)<l$. Then
\[
N_3'(m,l)=\frac{1}{q+1}N_3(m,l)=
\begin{cases}
(q^2-1)(q-1)q^{2(m+l)-1}&\text{if $l>0,$}\\
(q-1)q^{2m}&\text{if $l=0.$}
\end{cases}
\]
\end{thm}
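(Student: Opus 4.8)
The plan is to follow the template established in the proofs of \cref{firstone} and \cref{secondone}: translate the degree constraints on $(A_1,A_2,A_3)$ into a statement about which of the refined sets $S_R(m,l)$ from \cref{Rdef} the triple belongs to, and then invoke the cardinality formulas of \cref{Scounting}. Here the defining conditions are that $A_1$ is monic of degree exactly $m$, that $\deg(A_3)<m$, and that $\deg(A_2)\le m$. Thus the index $1$ must lie in the maximal-degree set $R$, the index $3$ must not, and the index $2$ is unconstrained; hence $R\in\{\{1\},\{1,2\}\}$. Scaling a triple by an element of $\lp\Fq\rp^\times$ gives a $(q-1)$-to-$1$ correspondence whose orbits each contain a unique representative with $A_1$ monic, so that
\[
N_3(m,l)=\frac{1}{q-1}\big(|S_1(m,l)|+|S_{1,2}(m,l)|\big).
\]

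For the evaluation when $m>2l$, I would substitute $|S_1(m,l)|=|S(m,l)|/(1+q+q^2)$ and $|S_{1,2}(m,l)|=(q-1)|S(m,l)|/(1+q+q^2)$ from \cref{Scounting}, so that the sum collapses to $q\,|S(m,l)|/(1+q+q^2)$ and hence $N_3(m,l)=q\,|S(m,l)|/(q^3-1)$; plugging in the two values of $|S(m,l)|$ (according to whether $l>0$ or $l=0$) yields the stated closed forms. For the boundary case $m=2l>0$, I would instead use the final clause of \cref{Scounting}, namely $|S_R(m,l)|=(q-1)|S_R(m)|/q$, together with the values of $|S_1(m)|$ and $|S_{1,2}(m)|$, which reduces $N_3(m,l)$ to $|S(m)|/(1+q+q^2)$ and then to the claimed expression.

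For the primed count $N_3'(m,l)$, the extra requirement that the (projectively unique, since $m>2l$) minimal solution satisfy $\deg(P_2)<l$ must be matched to the sets $T'_R(m,l)$. On the piece $R=\{1\}$, the argument of \cref{secondone} applies verbatim: since $\deg(A_1)>\max\{\deg(A_2),\deg(A_3)\}$, a height-$l$ solution with $\deg(P_2)<l$ is forced to have $\deg(P_3)=l>\max\{\deg(P_1),\deg(P_2)\}$, so these triples are exactly $T'_1(m,l)$. On the piece $R=\{1,2\}$, the condition $\deg(P_2)<l$ is, by projective uniqueness, precisely the complement of $T_{1,2}(m,l)$ inside $S_{1,2}(m,l)$, which \cref{Thunder1} identifies with $T'_{1,2}(m,l)$. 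Therefore $N_3'(m,l)=\frac{1}{q-1}\big(|T'_1(m,l)|+|T'_{1,2}(m,l)|\big)$, and \cref{Tprimecounting} gives $|T'_1|=|S_1|/(q+1)$ and $|T'_{1,2}|=|S_{1,2}|/(q+1)$, whence $N_3'=\frac{1}{q+1}N_3$ and the explicit formulas follow.

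I expect the main obstacle to be the bookkeeping in this last paragraph: correctly seeing that the single hypothesis $\deg(P_2)<l$ sorts the two admissible configurations of $R$ into two \emph{different} $T'$-type sets, and in particular invoking the identity $S_{1,2}(m,l)\setminus T_{1,2}(m,l)=T'_{1,2}(m,l)$ from \cref{Thunder1}, which holds only for $m>2l$ (consistent with the hypothesis of the primed statement). The remaining steps are mechanical substitutions into \cref{Scounting} and \cref{Tprimecounting}.
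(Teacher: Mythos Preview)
Your proposal is correct and follows essentially the same approach as the paper. The only minor difference is in the treatment of $N_3'$: the paper handles both pieces $R=\{1\}$ and $R=\{1,2\}$ with a single degree argument (if $\deg(A_1)=m>\deg(A_3)$ and $\deg(A_2)\le m$, then a height-$l$ solution with $\deg(P_2)<l$ cannot have $\deg(P_1)=l$, else $\deg(A_1P_1)=m+l$ would exceed both $\deg(A_2P_2)$ and $\deg(A_3P_3)$), immediately yielding $N_3'=\frac{1}{q-1}(|T'_1(m,l)|+|T'_{1,2}(m,l)|)$; you instead treat the two pieces separately, invoking the identity $S_{1,2}(m,l)\setminus T_{1,2}(m,l)=T'_{1,2}(m,l)$ from \cref{Thunder1} for the second. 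Both routes arrive at the same formula and the rest is identical.
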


\begin{proof}
Since we demand that $A_1$ is monic, in the notation of the previous section we have
\[
\begin{aligned}
N_3(m,l)&=\frac{1}{q-1}\big ( |S_1(m,l)|+|S_{1,2}(m,l)|\big ) \\
&=\frac{q}{(q-1)(1+q+q^2)}|S(m,l)| \\
&=
\begin{cases}
(q^2-1)^2q^{2(m+l)-1}&\text{if $l>0$,}\\
(q^2-1)q^{2m}&\text{if $l=0$}
\end{cases}
\end{aligned}
\]
by \cref{Scounting}.
For the second part, note that since the degree of $A_1$ is strictly larger than the degree of $A_3$
and at least as large as the degree of $A_2$, any solution $(P_1,P_2,P_3)$ of height $l$ to
\cref{1} with $\deg (P_2)<l$ necessarily must have $\deg (P_3)=l$ strictly larger than the degrees
of both $P_1$ and $P_2$. Thus 
\[
\begin{aligned}
N_3'(m,l)&=\frac{1}{q-1}\big ( |T'_1(m,l)|+|T'_{1,2}(m,l)|\big ) \\
&=\frac{1}{q^2-1}\big ( |S_1(m,l)|+|S_{1,2}(m,l)|\big ) 
\end{aligned}
\]
by \cref{Tprimecounting}.
\end{proof}

\begin{prop}\label{pcounting}
For all $g>0$ and $\epsilon\in \{ 1,2\}$ we have
\[
|\cPe (g)|=q^{2g+\epsilon }(1-q^{-2}).
\]
Also,
\[
|\cPe (0)|=q^{\epsilon}(1-q^{-1})
\]
for $\epsilon \in \{ 1,2\}$.

Finally,
\[
|\cPe '(g)|=
\begin{cases}
q^{2g+\epsilon }(1-q^{-1})&\text{if $g>0$ or $g=0$ and $\epsilon = 2$,}\\
q&\text{if $g=0$ and $\epsilon = 1$.}
\end{cases}
\]
\end{prop}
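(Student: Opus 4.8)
The plan is to strip away the geometry entirely: $\cPe(g)$ and $\cPe'(g)$ are simply the sets of monic cubefree (respectively squarefree) polynomials in $\Fq[X]$ of degree $n=2g+\epsilon$, so the whole proposition reduces to the classical count of such polynomials of a fixed degree $n$, followed by the substitution $n=2g+\epsilon$. I would carry this out with generating functions, using throughout that there are exactly $q^n$ monic polynomials of degree $n$, i.e.\ $\sum_{f}u^{\deg f}=(1-qu)^{-1}$.

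The one structural input I would establish first is a unique factorization: every monic $f\in\Fq[X]$ can be written uniquely as $f=c\,s^3$ with $c$ monic cubefree and $s$ monic, since for each monic irreducible $P$ the identity $v_P(f)=3v_P(s)+v_P(c)$ together with $v_P(c)\in\{0,1,2\}$ forces $v_P(c)\equiv v_P(f)\pmod 3$ and $v_P(s)=\lfloor v_P(f)/3\rfloor$. Because $\sum_{s}u^{3\deg s}=(1-qu^3)^{-1}$, this factorization translates into $(1-qu)^{-1}=C(u)(1-qu^3)^{-1}$, where $C(u)$ is the generating function for monic cubefree polynomials, whence
\[
C(u)=\frac{1-qu^3}{1-qu}=\sum_{n\ge 0}q^nu^n-\sum_{n\ge 0}q^{n+1}u^{n+3}.
\]
Extracting the coefficient of $u^n$ gives $q^n$ monic cubefree polynomials when $0\le n\le 2$ and $q^n-q^{n-2}=q^n(1-q^{-2})$ when $n\ge 3$. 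Substituting $n=2g+\epsilon$ and noting that $n\ge 3$ is equivalent to $g\ge 1$ yields $|\cPe(g)|=q^{2g+\epsilon}(1-q^{-2})$ for $g\ge 1$; the genus-zero case has $n=\epsilon\le 2$ and is settled by direct inspection.

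The squarefree count is the same argument with squares in place of cubes: the unique factorization $f=c\,s^2$ with $c$ squarefree gives the generating function $(1-qu^2)/(1-qu)$, so there are $q^n$ monic squarefree polynomials for $n\le 1$ and $q^n(1-q^{-1})$ for $n\ge 2$. Here the threshold $n\ge 2$ is exactly the condition ``$g\ge 1$, or $g=0$ and $\epsilon=2$,'' matching the first branch of the stated formula, while $g=0,\epsilon=1$ forces $n=1$ and the count $q$.

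The only place that requires genuine care is the bookkeeping at the small-degree boundary, which is exactly where the piecewise structure originates: the cubefree regime switches at $n=2$ and the squarefree regime at $n=1$, and these thresholds land precisely on the degenerate genus-zero degrees $2g+\epsilon$. Consequently each genus-zero case must be matched by hand against the correct branch rather than read off the generic formula, and I would double-check the genus-zero cubefree values directly, since all monic polynomials of degree at most two are automatically cubefree.
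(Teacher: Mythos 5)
Your argument is correct where the proposition is correct, but it takes a genuinely different route from the paper. The paper proves the cubefree count by invoking \cref{gcd_cf}: in characteristic $3$ a monic $f$ of degree $2g+\epsilon$ is cubefree exactly when the triple $\big(c_0(f),c_1(f),c_2(f)\big)$ is relatively prime, so $|\cPe (g)|$ is read off from the cardinalities of the sets $S_R(m)$ computed in \cref{Scounting} (which ultimately rest on the height/Riemann--Roch machinery of \cref{lkheight}); the squarefree count is simply cited to Rosen. Your zeta-function argument via the unique factorization $f=cs^3$ is more elementary and self-contained, handles the squarefree case by the identical mechanism with $s^2$ in place of $s^3$, and---unlike the paper's proof---is genuinely characteristic-independent. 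That matters: \cref{pcounting} sits in a section asserted to hold for arbitrary $q$, yet the paper's own proof leans on the characteristic-$3$ decomposition $f=c_0(f)^3+c_1(f)^3X+c_2(f)^3X^2$. The trade-off is that the paper's route doubles as a consistency check on the $S_R(m)$ counts it needs elsewhere, whereas yours proves the proposition in isolation.

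One point you should press harder rather than leave as a ``double-check'': your direct inspection at $g=0$ is decisive, and it shows that the middle display of the proposition is wrong as stated. Every monic polynomial of degree $1$ or $2$ is automatically cubefree, so $|\cPe (0)|=q^{\epsilon}$, not $q^{\epsilon}(1-q^{-1})$; the paper dismisses this case as ``trivial'' and never verifies it. (The error is harmless downstream, since in the $g=0$ case of \cref{bigone} the numerator and denominator of $\mu_{\epsilon,0}(0)$ are the same set, but your write-up should state the correct value $q^\epsilon$ rather than try to derive the printed one.) Your boundary bookkeeping for the squarefree case---threshold $n\ge 2$, i.e.\ $g\ge 1$ or ($g=0$ and $\epsilon=2$)---does match the stated formula exactly, so the discrepancy is confined to the cubefree count at $g=0$.
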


\begin{proof} We will prove the cases for $\cPe (g)$; those for $\cPe '(g)$ are well-known (see Proposition
2.3 of \cite{Rosen}, for example).
The case where $g=0$ is trivial, so we will assume that $g>0$.

Suppose that $g\equiv 0\pmod 3$. If $\epsilon = 1$, then by \cref{gcd_cf} and definition 
$f\in \Fq [X]$ is cube-free of degree $2g+1$ if and only if
$\big ( c_0(f),c_1(f),c_2(f)\big ) \in S_2(2g/3)\cup S_{1,2}(2g/3)$. 
The result in this case follows from \cref{Scounting} and \cref{2}.
If $\epsilon = 2$, then by \cref{gcd_cf} and definition
$f\in \Fq [X]$ is cube-free of degree $2g+2$ if and only
$\big ( c_0(f),c_1(f),c_2(f)\big )\in S_3(2g/3)\cup S_{1,3}(2g/3)\cup S_{2,3}(2g/3)\cup S_{1,2,3}(2g/3)$, and the result 
again follows from \cref{Scounting} and \cref{2}. The other cases
($g\equiv 1\pmod 3$ and $g\equiv 2\pmod 3$) are proven in an entirely similar manner.
\end{proof}

\section{The \textit{a}-numbers of hyperelliptic curves in characteristic three}
\label{consequences}

In this section we restrict to the case $p=3$ to deduce the results mentioned in \cref{intro}.

\subsection{The proportion of curves with a given \textit{a}-number}
We may now prove \cref{bigone} via the counting results in the previous section and 
\cref{anumberheights}.

\begin{proof}[Proof of \cref{bigone}]

Suppose first that $g\equiv 0\pmod 3$. 

For $f\in\cPe (g)$, if $\epsilon = 1$ then the space $V$ in \cref{anumberheights} has height
\[
h(V)=\frac{2g}{3}=\deg \big ( c_1(f)\big )>\deg\big ( c_2(f)\big ) .
\]
 We use \cref{thirdone} with 
\[
A_1=c_1(f),\qquad A_3=c_2(f),\qquad A_2=c_0(f),\qquad P_1=c_2(Q), \qquad P_2=c_1(Q),\qquad \text{and}\  
P_3=c_0(Q).
\]
We set $m=\frac{2g}{3}$. Then for any $0\le l\le \frac{g}{3}$ the number of $f\in\cPe (g)$ 
with $a(H_f)=\frac{g}{3}-l=a$ is $N_3(m,l)$ by \cref{anumberheights}. We thus have
\[
\begin{aligned}
\mu _{1 ,g}(a)&= \frac{N_3(m, g/3 - a)}{|\cPe (g)|}\\
&= \begin{cases}
\frac{q^{3m-2-2a}(q^2-1)(q-1)}{q^{2g-1}(q^2-1)}&\text{if $a=0,$}\\
\frac{q^{2(m+g/3-a)-2}(q^2-1)^2}{q^{2g-1}(q^2-1)}&\text{if $0<a<g/3$,}\\
\frac{q^{2(m+g/3-a)}(q^2-1)}{q^{2g-1}(q^2-1)}&\text{if $a=g/3$,}
\end{cases}\\
&=\begin{cases}
q^{-2a}\frac{q-1}{q}&\text{if $a=0$,}\\
q^{-2a}\frac{q^2-1}{q}&\text{if $0<a<g/3$,}\\
q^{-2a}q&\text{if $a=g/3$.}
\end{cases}
\end{aligned}
\]
 
For $f\in\cPe (g)$, if $\epsilon = 2$ then the space $V$ in \cref{anumberheights} has height
\[
h(V)=\frac{2g}{3}=\deg\big ( c_2(f)\big ) .
\]
We use \cref{firstone} with 
\[
A_1=c_2(f),\qquad A_3=c_1(f),\qquad A_2=c_0(f),\qquad P_1=c_0(Q),\qquad P_2=c_2(Q),\qquad\text{and}\  
P_3=c_1(Q).
\]
We set $m=\frac{2g}{3}$. Then for any $0\le l\le \frac{g}{3}$ the number of $f\in\cPe (g)$ 
with $a(H_f)=\frac{g}{3}-l=a$ is $N_1(m,l)$ by \cref{anumberheights}. We thus have
\[
\begin{aligned}
\mu _{2,g}(a)&=\frac{N_1(m, g/3 -a)}{|\cPe (g)|}\\
&=\begin{cases}
\frac{q^{3m-1}(q^2-1)(q-1)}{q^{2g}(q^2-1)}&\text{if $a=0,$}\\
\frac{q^{2(m+g/3-a) -1}(q^2-1)^2}{q^{2g}(q^2-1)}&\text{if $0<a<g/3,$}\\
\frac{q^{2(m+g/3-a) +1}(q^2-1)}{q^{2g}(q^2-1)}&\text{if $a=g/3,$}
\end{cases}\\
&=\begin{cases}
q^{-2a}\frac{q-1}{q}&\text{if $a=0$,}\\
q^{-2a}\frac{q^2-1}{q}&\text{if $0<a<g/3,$}\\
q^{-2a}q&\text{if $a=g/3$.}\end{cases}
\end{aligned}
\]

Next suppose that $g\equiv 1\pmod 3$. 

For $f\in\cPe (g)$, if $\epsilon = 1$ then the space $V$ in \cref{anumberheights} has height
\[
h(V)=\frac{2g+1}{3}=\deg \big ( c_0(f)\big ) >\max\{ \deg\big ( c_1(f)\big ) ,\deg\big ( c_2(f)\big )\}.
\]
We use \cref{secondone} with
\[
A_1=c_0(f),\qquad A_2=c_1(f),\qquad A_3=c_2(f),\qquad P_1=c_2(Q),\qquad P_2=c_1(Q),\qquad\text{and}\  
P_3=c_0(Q).
\]
We set $m=\frac{2g+1}{3}$. 
Note that $m=2(g-1)/3 +1,$ so that by \cref{mink} the first minima $\mu _1(V)$
is no greater than $(g-1)/3$. By
\cref{anumberheights}
for any $0<l\le \frac{g-1}{3}$ the number of $f\in\cPe(g)$ with 
$a(H_f)=\frac{g-1}{3}-l+1=a$ is
$N'_2(m,l)+\big (N_2(m,l-1) -N'_2(m,l-1)\big )$, the number of $f\in\cPe(g)$ with $a(H_f)=\frac{g-1}{3}+1$ is
$N'_2\big ( m, 0)$, and the number of $f\in \cPe(g)$ with $a(H_f)=0$ is 
$N_2(m,\frac{g-1}{3})-N_2'(m,\frac{g-1}{3})$. 

Excluding for the moment the case where $g=1$ and $a=0$, we therefore have 
\[
\begin{aligned}
\mu _{1,g}(a)&=\begin{cases}
               \frac{N_2\big ( m,(g-1)/3\big ) -N_2'\big ( m,(g-1)/3\big )}{|\cPe (g)|}
               &\text{if $a=0,$}\\
               \frac{N_2'\big ( m,(g-1)/3-a+1\big )+N_2\big ( m,(g-1)/3-a\big ) -N'_2\big ( m,(g-1)/3-a\big )}{|\cPe (g)|}
               &\text{if $1\le a\le\frac{g-1}{3},$}\\
               \frac{N'_2( m,0)}{|\cPe (g)|}&\text{if $a=\frac{g+2}{3},$}
              \end{cases}\\
     &=\begin{cases}
       \frac{q^{2g-2}(q^2-1)^2}{(q+1)q^{2g-1}(q^2-1)}&\text{if $a=0$,}\\
       \frac{q^{2(m+(g-1)/3 -a) -3}q(q^2-1)^2}{q^{2g-1}(q^2-1)}&\text{if $0<a<\frac{g-1}{3}$,}\\
       \frac{q^{2(m+(g-1)/3-a)-1}(q^2+q-1)}{(q+1)q^{2g-1}}&\text{if $a=\frac{g-1}{3}$,}\\
       \frac{(q-1)q^{2m-1}}{(q^2-1)q^{2g-1}}&\text{if $a= \frac{g+2}{3}$,}
       \end{cases}\\
     &=\begin{cases}
       q^{-2a}\frac{q-1}{q}&\text{if $a=0$,}\\
       q^{-2a}\frac{q^2-1}{q}&\text{if $0< a<\frac{g-1}{3}$,}\\
       q^{-2a}\frac{q^2+q-1}{q+1}&\text{if $a=\frac{g-1}{3},$}\\
       q^{-2a}\frac{q^2}{q+1}&\text{if $a=\frac{g+2}{3}.$}
       \end{cases}
\end{aligned}
\]

If $g=1$ (so that $(g-1)/3=0$ and $m=1$) and $a=0$ we have
\[
\begin{aligned}
\mu _{1,1}(0)&=\frac{N_2\big ( 1,0\big ) -N_2'\big ( 1, 0\big )}{|\cPe (1)|}\\
              &= \frac{(q^2-1)q-(q-1)q}{q(q^2-1)}\\
              &= \frac{q}{q+1}.
\end{aligned}
\]

For $f\in\cPe (g)$, if $\epsilon = 2$ then the space $V$ in \cref{anumberheights} has height
\[
h(V)=\frac{2g+1}{3}=\deg (c_1)>\deg (c_2).
\]
We use \cref{thirdone} with
\[
A_1=c_1(f),\qquad A_2=c_0(f),\qquad A_3=c_2(f),\qquad P_1=c_1(Q),\qquad P_2=c_2(Q),\qquad\text{and}\  
P_3=c_0(Q).
\]
We set $m=\frac{2g+1}{3}$. 
Once again the first minima $\mu _1(V)$
is no greater than $(g-1)/3$. By
\cref{anumberheights}
for any $0<l\le \frac{g-1}{3}$ the number of $f\in\cPe(g)$ with 
$a(H_f)=\frac{g-1}{3}-l+1=a$ is
$N'_3(m,l)+\big ( N_3(m,l-1)-N'_3(m,l-1)\big )$. The number of $f\in\cPe(g)$ with $a(H_f)=
\frac{g-1}{3}+1$ is
$N_3'(m,0)$ and the number of $f\in\cPe (g)$ with $a(H_f)=0$ is
$N_3(m,\frac{g-1}{3})-N_3'(m,\frac{g-1}{3})$. 

Again temporarily excluding the case where $g=1$ and $a=0$, we therefore have
\[
\begin{aligned}
\mu _{2,g}(a)&=\begin{cases}
               \frac{N_3\big ( m,(g-1)/3\big ) -N_3'\big ( m,(g-1)/3\big )}{|\cPe (g)|}
               &\text{if $a=0,$}\\
               \frac{N_3'\big ( m,(g-1)/3-a+1\big )+N_3\big ( m,(g-1)/3-a\big ) -
N'_3\big ( m, (g-1)/3-1\big )}{|\cPe (g)|}
               &\text{if $0<a\le\frac{g-1}{3},$}\\
               \frac{N'_3 ( m,0)}{|\cPe (g)|}&\text{if $a=\frac{g+2}{3},$}
              \end{cases}\\
     &=\begin{cases}
       \frac{q^{2g-1}(q^2-1)^2}{(q+1)q^{2g}(q^2-1)}&\text{if $a=0$,}\\
       \frac{q^{2(m+(g-1)/3 -a +1) -3}(q^2-1)^2}{q^{2g}(q^2-1)}&\text{if $0<a<\frac{g-1}{3}$,}\\
       \frac{q^{2(m+(g-1)/3-a)}(q^2+q-1)}{(q+1)q^{2g}}&\text{if $a=\frac{g-1}{3}$,}\\
       \frac{q^{2(m+(g-1)/3 -a +1)}(q-1)}{q^{2g}(q^2-1)}&\text{if $a= \frac{g+2}{3}+1$,}
       \end{cases}\\
     &=\begin{cases}
       q^{-2a}\frac{q-1}{q}&\text{if $a=0$,}\\
       q^{-2a}\frac{q^2-1}{q}&\text{if $0<a<\frac{g-1}{3}$,}\\
       q^{-2a}\frac{q^2+q-1}{q+1}&\text{if $a=\frac{g-1}{3}$,}\\
       q^{-2a}\frac{q^2}{q+1}&\text{if $a=\frac{g+2}{3}.$}
       \end{cases}
\end{aligned}
\]

If $g=1$ and $a=0$ we have
\[
\begin{aligned}
\mu _{2,1}(0)&=\frac{N_3\big ( 1,0\big ) -N_3'\big ( 1, 0\big )}{|\cPe (1)|}\\
              &= \frac{(q^2-1)q^2-(q-1)q^2}{q^2(q^2-1)}\\
              &= \frac{q}{q+1}.
\end{aligned}
\]

Finally suppose that $g\equiv 2\pmod 3$.

For $f\in\cPe (g)$, if $\epsilon = 1$ then the space $V$ in \cref{anumberheights} has height
\[
h(V)=\frac{2g-1}{3}=\deg (c_2).
\]
We use \cref{firstone} with
\[
A_1=c_2(f),\qquad A_2=c_0(f),\qquad A_3=c_1(f),\qquad P_1=c_0(Q),\qquad P_2=c_2(Q),\qquad\text{and}\  
P_3=c_1(Q).
\]
We set $m=\frac{2g-1}{3}$. 
Note that $m=2(g-2)/3 +1,$ so that by \cref{mink} the first minima $\mu _1(V)$
is no greater than $(g-2)/3$. By
\cref{anumberheights}
for any $0<l\le \frac{g-2}{3}$ the number of $f\in\cPe(g)$ with 
$a(H_f)=\frac{g-2}{3}-l+1=a$ is
$N'_1(m,l)+\big ( N_1(m,l-1)-N'_1(m,l-1)\big )$. The number of $f\in\cPe(g)$ with $a(H_f)=\frac{g-2}{3}+1$ is
$N_1'(m,0),$ and the number of $f\in\cPe (g)$ with $a(H_f)=0$ is
$N_1(m,\frac{g-2}{3})-N_1'(m,\frac{g-2}{3})$.

Temporarily excluding the case where $g=2$ and $a=0$, we therefore have
\[
\begin{aligned}
\mu _{1,g}(a)&=\begin{cases}
               \frac{N_1\big ( m,(g-2)/3\big ) -N_1'\big ( m,(g-2)/3\big )}{|\cPe (g)|}
               &\text{if $a=0,$}\\
               \frac{N_1'\big ( m,(g-2)/3-a+1\big )+N_1\big ( m, (g-2)/3-a\big )
-N_1'\big ( m, (g-2)/3-a\big )}{|\cPe (g)|}
               &\text{if $0<a\le\frac{g-2}{3},$}\\
               \frac{N_1' ( m,0 )}{|\cPe (g)|}&\text{if $a=\frac{g+1}{3},$}
              \end{cases}\\
     &=\begin{cases}
       \frac{q^{2g-4+2}(q^2-1)^2}{(q+1)q^{2g-1}(q^2-1)}&\text{if $a=0$,}\\
       \frac{q^{2(m+(g-2)/3 -a}(q^2-1)^2}{q^{2g-1}(q^2-1)}&\text{if $0<a< \frac{g-2}{3}$,}\\
       \frac{q^{2(m+(g-2)/3 -a)+1}(q^2+q-1)}{(q+1)q^{2g-1}}&\text{if $a= \frac{g-2}{3}$,}\\
       \frac{q^{2(m+(g-2)/3 -a +1) +1}(q^2-1)}{q^{2g-1}(q^2-1)}&\text{if $a= \frac{g+1}{3}$,}
       \end{cases}\\
     &=\begin{cases}
       q^{-2a}\frac{q-1}{q}&\text{if $a=0$,}\\
       q^{-2a}\frac{q^2-1}{q}&\text{if $0< a<\frac{g-2}{3}$,}\\
       q^{-2a}\frac{q^2+q-1}{q+1}&\text{if $a=\frac{g-2}{3}$,}\\
       q^{-2a}\frac{q^2}{q+1}&\text{if $a=\frac{g+1}{3}.$}
       \end{cases}
\end{aligned}
\]

If $g=2$ (so that $(g-2)/3=0$ and $m=1$) and $a=0$ we have
\[
\begin{aligned}
\mu _{1,2}(0)&=\frac{N_1\big ( 1,0\big ) -N_1'\big ( 1, 0\big )}{|\cPe (2)|}\\
              &= \frac{(q^2-1)q^3-(q-1)q^3}{q^3(q^2-1)}\\
              &= \frac{q}{q+1}.
\end{aligned}
\]

For $f\in\cPe (g)$, if $\epsilon = 2$ then the space $V$ in \cref{anumberheights} has height
\[
h(V)=\frac{2g+2}{3}=\deg (c_0)>\max\{ \deg (c_1),\deg (c_2)\}.
\]
We use \cref{secondone} with
\[
A_1=c_0,\qquad A_2=c_1,\qquad A_3=c_2,\qquad P_1=Q_1,\qquad P_2=Q_2,\qquad\text{and}\  P_3=Q_3.
\]
We set $m=\frac{2g+2}{3}$. 
Note that $m=2(g+1)/3,$ so that by \cref{mink} the first minima $\mu _1(V)$
is no greater than $(g+1)/3$. By
\cref{anumberheights} $a(H_f)>0$ only if $\mu _1(V) \le \frac{g-2}{3}$. Moreover,
for any $0\le l\le\frac{g-2}{3}$ the number of $f\in\cPe(g)$ with 
$a(H_f)=\frac{g-2}{3}-l+1=a$ is
$N_2(m,l)$. The number of $f\in\cPe(g)$ with $a(H_f)=0$ is $N_2\big ( m, (g+1)/3\big )$. 

Therefore
\[
\begin{aligned}
\mu _{2,g}(a)&=
               \frac{N_2\big ( m,(g-2)/3 -a+1\big )}{|\cPe (g)|}\\
     &=\begin{cases}
       \frac{q^{2g-1}(q^2-1)(q-1)}{q^{2g}(q^2-1)}&\text{if $a=0$,}\\
       \frac{q^{2(m+(g-2)/3-a+1)-3}(q^2-1)^2}{q^{2g}(q^2-1)}&\text{if $1\le a<\frac{g+1}{3}$,}\\
       \frac{q^{2(m+(g-2)/3 -a +1) -1}(q^2-1)}{q^{2g}(q^2-1)}&\text{if $a= \frac{g+1}{3}$,}
       \end{cases}\\
     &=\begin{cases}
       q^{-2a}\frac{q-1}{q}&\text{if $a=0$,}\\
       q^{-2a}\frac{q^2-1}{q}&\text{if $1\le a<\frac{g+1}{3}$,}\\
       q^{-2a}q&\text{if $a=\frac{g+1}{3}$.}
       \end{cases}
\end{aligned}
\]

\end{proof}

We now turn to proofs of \cref{actualvalues} and \cref{boundedvalues}.

\begin{proof}[Proof of \cref{actualvalues}]
Part~\hyperref[squarefreevanishing]{(\ref*{squarefreevanishing})} is immediate from \cref{bigone}.
To prove part~\hyperref[topanumber]{(\ref*{topanumber})}, let $\epsilon\in\lb1,2\rb$ and $f\in\mathcal{P}_\epsilon(q,g)$, so there are monic squarefree polynomials $f_1, f_2\in\Fq[X]$ such that $f=f_1\lp f_2\rp^2$ and $\gcd{\lp f_1,f_2\rp}=1$.

Suppose for a moment that we are in the case where $f$ is not squarefree, so that $\deg{f_2}\geq1$.
Let $g_1$ be the integer such that $2g_1+\epsilon=\deg{f_1}$; since $g\equiv1\pmod{3}$ and $g-1\geq g_1$, we see that
\[
\left \lceil \frac{g}{3}\right \rceil >
\frac{g-1}{3}
=\left \lceil \frac{g-1}{3}\right  \rceil
\geq\left \lceil\frac{g_1}{3}\right \rceil .
\]
Since $a(H_f)=a(H_{f_1})$ by \cref{collins}, we apply \cref{bigone} to deduce that 
$a(H_f)<\left \lceil \frac{g}{3}\right \rceil$ in this case.
Thus, we see that
\[
\mu_{\epsilon,g}'(a)
=\mu_{\epsilon,g}(a)
\lp\frac{q^{2g+\epsilon}-q^{2g+\epsilon-2}}
{q^{2g+\epsilon}-q^{2g+\epsilon-1}}\rp
=\mu_{\epsilon,g}(a)\lp1+q^{-1}\rp,
\]
so part~\hyperref[topanumber]{(\ref*{topanumber})} is true by \cref{bigone}.
\end{proof}

\begin{proof}[Proof of \cref{boundedvalues}]
Let
\[
I=\lb f\in\mathcal{P}_{q,\epsilon}(g)\setminus\mathcal{P}'_{q,\epsilon}(g)\mid a(H_f)=a\rb.
\]

To prove part~\hyperref[corpart1]{(\ref*{corpart1})}, we remark that the $g=0$ case is immediate by \cref{bigone}, so suppose that $g>0$.
Then
\begin{align*}
\mu_{\epsilon,g}'(a)
&=\mu_{\epsilon,g}(a)
\lp\frac{q^{2g+\epsilon}-q^{2g+\epsilon-2}}
{q^{2g+\epsilon}-q^{2g+\epsilon-1}}\rp
-\frac{1}{q^{2g+\epsilon}-q^{2g+\epsilon-1}}\sum_{f\in I}{1}\\
&=\mu_{\epsilon,g}(a)
\lp1+q^{-1}\rp
-\frac{1}{q^{2g+\epsilon}\lp1-q^{-1}\rp}\sum_{f\in I}{1},
\end{align*}
so part~\hyperref[corpart1]{(\ref*{corpart1})} follows from the fact that
\[
\lv I\rv
\leq \lv\mathcal{P}_{q,\epsilon}(g)\setminus\mathcal{P}'_{q,\epsilon}(g)\rv
=q^{2g+\epsilon-1}\lp1-q^{-1}\rp.
\]

Turning to part~\hyperref[corpart2]{(\ref*{corpart2})}, we see once again that if $g=0$, then the result is immediate from \cref{bigone}; thus, we suppose $g>0$.
Since $a>0$ we use \cref{bigone} to see that for all $d\in\lb0,\ldots,g\rb$,
\[
\mu_{\epsilon,d}(a)\leq q^{-2a+1},
\]
so part~\hyperref[corpart1]{(\ref*{corpart1})} implies that for all such $d$,
\[\numberthis\label{upperbou}
\mu_{\epsilon,d}'(a)-q^{-2a+1}\leq q^{-2a}.
\]
Thus, part~\hyperref[corpart2]{(\ref*{corpart2})} will follow if we show
\[
\lv I\rv<2q^{2g+\epsilon-2a}\lp1-q^{-1}\rp.
\]
To do so, note that
\begin{align*}
\lv I\rv
&=\sum_{d=0}^{g-1}
{\sum_{\substack{f_1\in\mathcal{P}'_{q,\epsilon}(g)\\a(H_f)=a}}
{\sum_{\substack{f_2\in\Fq[X]\text{ with }f\text{ monic,}\\\text{squarefree, of degree }g-d,\\\text{ and coprime to }f_1}}
{1}}}&&\text{by \cref{collins}}\\
&\leq\sum_{d=0}^{g-1}
{\sum_{\substack{f_1\in\mathcal{P}'_{q,\epsilon}(g)\\a(H_f)=a}}
{\sum_{\substack{f_2\in\Fq[X]\text{ with }f\text{ monic}\\\text{of degree }g-d}}
{1}}}\\
&=\sum_{d=0}^{g-1}{\mu_{\epsilon,d}'(a)\lp q^{2d+\epsilon}-q^{2d+\epsilon-1}\rp q^{g-d}}
&&\text{since }\mu_{\epsilon,0}'(a)=0\\
&\leq q^{g+\epsilon-2a+1}\lp1+q^{-1}\rp
\sum_{d=0}^{g-1}{\lp q^d-q^{d-1}\rp}
&&\text{by \cref{upperbou}}\\
&=q^{g+\epsilon-2a}\lp\frac{1+q^{-1}}{1-q^{-1}}\rp\lp q^g-q^{g-1}-1+q^{-1}\rp\\
&<q^{2g+\epsilon-2a}\lp\frac{1+q^{-1}}{1-q^{-1}}\rp\lp1-q^{-1}\rp\\
&\leq2q^{2g+\epsilon-2a}\lp1-q^{-1}\rp ,
\end{align*}
since $q\geq3$ implies $\lp\frac{1+q^{-1}}{1-q^{-1}}\rp\leq2.$
\end{proof}

\subsection{A heuristic model and conjecture}\label{heuristicmodel}

We continue to consider the $p=3$ case.
As in \cref{heuristic}, we write $F$ for $\Fq(X)$.
In \cref{anumberheights} we associated to any monic squarefree polynomial $f\in\Fq[X]$ a particular 2-dimensional 
subspace $V$ of $F^3$.
Suppose that $g$ is a positive multiple of 3, say $g=3j$ for $j\in\Z_{>0}$. In this case 
for any $f\in\mathcal{P}'_2(g)$:
\begin{itemize}
\item
$h\lp V\rp=2j$, and
\item
$a\lp H_f\rp=j-\mu _1(V)$.
\end{itemize}
Thus, we propose to model the $a$-number statistics of hyperelliptic curves of a given genus by random 2-dimensional subspaces of a fixed height.
To this end, for any $j,a\in\Z_{\geq0}$, let
\[
\nu_{j}(a)
=\frac{\lv\lb V\subset F^3\mid\dim_F (V)=2,\ h(V)=2j,\ \mu _1(V)=j-a\rb\rv}
{\lv\lb V\subset F^3\mid \dim _F (V)=2,\ h(V)=2j\rb\rv}.
\]

In the notation of \cref{thunder}
\[
\nu _j(a)=\frac{|S(2j,j-a)|}{|S(2j)|}
\]
whenever $0\le a\le j$, and 0 otherwise, since $0\le\mu _1(V)\le\frac{h(V)}{2}$ for all 2-dimensional subspaces by \cref{mink}. 
By \cref{Scounting} we immediately have the following.

\begin{thm}\label{heurcomp}
For any prime $p$, $j\in\Z_{>0}$, and $a\in\Z_{\geq0}$ we have
\[
\nu_{j}(a)
=\begin{cases}
0&\text{if $a>j$,}\\
q^{-2a}q&\text{if $a=j$,}\\
q^{-2a}(q-q^{-1})&\text{if $j>a>0$,}\\
q^{-2a}(1-q^{-1})&\text{if $a=0$.}
\end{cases}
\]
Moreover,
\[
\nu_{0}(a)
=\begin{cases}
0&\text{if $a>0$,}\\
1&\text{if $a=0$.}
\end{cases}
\]
\end{thm}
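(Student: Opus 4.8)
The plan is to directly compute the ratio $\nu_j(a) = |S(2j, j-a)|/|S(2j)|$ using the explicit cardinalities provided by \cref{Scounting}, handling the boundary cases $a = j$ (where $l = 0$) and $a = 0$ (where $m = 2l$) separately from the generic interior case. First I would observe that since we set $m = 2j$ and $l = j - a$, the relation between $m$ and $l$ determines which formula from \cref{Scounting} applies: the condition $m > 2l$ becomes $2j > 2(j-a)$, i.e. $a > 0$, while $m = 2l$ corresponds exactly to $a = 0$.

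For the interior and top cases ($0 < a \le j$, so $m > 2l \ge 0$), I would substitute into $|S(m,l)| = q^{2(m+l)-3}(q^3-1)(q^2-1)^2$ when $l > 0$ (i.e. $0 < a < j$) and $|S(m,l)| = q^{2m-1}(q^3-1)(q^2-1)$ when $l = 0$ (i.e. $a = j$). Dividing by $|S(m)| = |S(2j)| = q^{3m-2}(q^3-1)(q^2-1) = q^{6j-2}(q^3-1)(q^2-1)$ and simplifying the powers of $q$ (with $m + l = 2j + (j-a) = 3j - a$ in the interior case) should yield $q^{-2a}(q - q^{-1}) = q^{-2a+1}(1 - q^{-2})$ and $q^{-2a}q = q^{-2a+1}$ respectively, matching the claimed values. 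For the case $a = 0$, I would use the final part of \cref{Scounting}, namely $|S_R(m,l)| = |S_R(m)|(q-1)/q$ summed appropriately (or the full-space version $|S(m,l)| = |S(m)|(q-1)/q$ extracted from the $R = \emptyset$ computation in that proof), giving $\nu_j(0) = (q-1)/q = 1 - q^{-1}$. The case $a > j$ is immediate since then $\mu_1(V) = j - a < 0$ is impossible given $\mu_1(V) \ge 0$, as noted via \cref{mink}.

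The final assertion about $\nu_0(a)$ treats the degenerate genus-zero situation $j = 0$, where $h(V) = 0$ forces $V = F^3$ itself (the unique height-zero ``2-dimensional'' object in this setup), so $\mu_1 = 0$ and the only occupied value is $a = 0$; I would simply remark that $\nu_0(0) = 1$ and $\nu_0(a) = 0$ for $a > 0$ by this direct observation, since the denominator counts exactly one subspace.

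I do not expect any genuine obstacle here: the statement is a bookkeeping consequence of \cref{Scounting}, and the only mild care needed is in tracking which of the three regimes ($l > 0$, $l = 0$ with $m > 2l$, and $m = 2l$) corresponds to which value of $a$, and in verifying that the powers of $q$ collapse correctly after cancellation of the common factor $(q^3-1)(q^2-1)$. The main thing to get right is the exponent arithmetic in the interior case, confirming that $2(3j - a) - 3 - (6j - 2) = -2a - 1$ so that the surviving factor is $q^{-2a-1}(q^2-1)^2/(q^2-1) = q^{-2a-1}(q^2-1) = q^{-2a}(q - q^{-1})$, exactly as stated.
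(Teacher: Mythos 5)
Your proposal is correct and takes essentially the same route as the paper: the paper's entire argument is the identity $\nu_j(a)=|S(2j,j-a)|/|S(2j)|$ (with the $a>j$ case killed by $\mu_1(V)\ge 0$ via \cref{mink}) followed by substitution of the cardinalities from \cref{Scounting}, and your regime bookkeeping and exponent arithmetic all check out. One small correction in your treatment of $j=0$: it is not true that $V=F^3$ is ``the unique height-zero $2$-dimensional object'' --- there are $(q^3-1)/(q-1)=1+q+q^2$ two-dimensional subspaces of $F^3$ of height zero (the hyperplanes cut out by constant-coefficient linear forms), and none of them is $F^3$; the reason $\nu_0(0)=1$ is rather that \cref{mink} gives $\mu_1(V)+\mu_2(V)=h(V)=0$ with $0\le\mu_1(V)\le\mu_2(V)$, forcing $\mu_1(V)=0$ for every such $V$, so numerator and denominator coincide.
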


In light of \cref{heurcomp}, we make the following heuristic conjecture.

\begin{conj}\label{wemadeaconj}
If $p=3$, $\epsilon \in \{1,2\}$, and $a\in\Z_{\geq0}$, then
\[
\lim_{g\to\infty}{\mu_{\epsilon,g} '(a)}=
\begin{cases}
1-q^{-1}&\text{if }a=0\\
q^{-2a+1}\lp1-q^{-2}\rp&\text{if }a>0.
\end{cases}
\]
\end{conj}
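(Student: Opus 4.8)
The plan is to deduce the squarefree limit from the cubefree values already computed in \cref{bigone}. First I would observe that for any fixed $a$, the formulas in \cref{bigone} give $\lim_{g\to\infty}\mu_{\epsilon,g}(a)=1-q^{-1}$ when $a=0$ and $\lim_{g\to\infty}\mu_{\epsilon,g}(a)=q^{-2a+1}(1-q^{-2})$ when $a>0$, since for large $g$ the thresholds $[(g\pm1)/3]$ exceed $a$ and only the ``interior'' cases survive. Thus the conjectured limit for $\mu'_{\epsilon,g}(a)$ is exactly $\lim_{g\to\infty}\mu_{\epsilon,g}(a)$, and the conjecture is equivalent to the assertion that the squarefree and cubefree $a$-number distributions share a common limit. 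Writing $n_g(a)=\lvert\{f\in\cPe (g):a(H_f)=a\}\rvert$ and $n_g'(a)$ for the analogous squarefree count, \cref{pcounting} gives $\lvert\cPe (g)\rvert/\lvert\cPe '(g)\rvert=(1-q^{-2})/(1-q^{-1})=1+q^{-1}$, so that $\mu'_{\epsilon,g}(a)/\mu_{\epsilon,g}(a)=(1+q^{-1})\,n_g'(a)/n_g(a)$. Hence the conjecture reduces to the single equidistribution statement $\lim_{g\to\infty}n_g'(a)/n_g(a)=q/(q+1)$: the density of squarefree polynomials inside the cubefree $a$-number-$a$ class should equal the overall squarefree-among-cubefree density $q/(q+1)$. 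Informally, the global event ``$a(H_f)=a$'' should be asymptotically independent of the local event ``$f$ is squarefree.''

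To attack this I would exploit the unique factorization $f=f_1f_2^2$ with $f_1,f_2$ monic, squarefree, and coprime, together with $a(H_f)=a(H_{f_1})$ from \cref{collins}. This yields the exact convolution $n_g(a)=\sum_{d=0}^{g}\sum_{f_1}r_{f_1}(d)$, where the inner sum is over $f_1\in\cPe '(g-d)$ with $a(H_{f_1})=a$ and $r_{f_1}(d)$ counts monic squarefree $f_2$ of degree $d$ coprime to $f_1$; the $d=0$ term is precisely $n_g'(a)$. Passing to generating functions in $u$ (tracking $\deg f=2g+\epsilon$), the coprimality resolves by an Euler-product/M\"obius computation to $\sum_d r_{f_1}(d)u^{2d}=Z(u^2)\prod_{P\mid f_1}(1+u^{2\deg P})^{-1}$, where $Z(u^2):=\prod_P(1+u^{2\deg P})=(1-qu^4)/(1-qu^2)$ is the squarefree generating function. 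Consequently the cubefree series factors as $\sum_g n_g(a)u^{2g+\epsilon}=Z(u^2)\sum_{f_1}\lambda(f_1)u^{\deg f_1}$, the latter sum over squarefree $f_1$ with $a(H_{f_1})=a$, carrying the multiplicative weight $\lambda(f_1)=\prod_{P\mid f_1}(1+u^{2\deg P})^{-1}$.

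Because the known values of $n_g(a)$ make the cubefree series rational in $u^2$ with a simple pole at $u^2=q^{-2}$, while $Z(u^2)$ is holomorphic and nonzero there (its own pole sitting at $u^2=q^{-1}$), singularity analysis pins down the asymptotics of the $\lambda$-weighted squarefree sum $\sum_{f_1}\lambda(f_1)u^{\deg f_1}=\mathcal B(u)/Z(u^2)$ exactly. The hard part will then be passing from this $\lambda$-weighted count back to the \emph{unweighted} count $n_g'(a)$. Since the indicator ``$a(H_{f_1})=a$'' is a global invariant and is not multiplicative, standard Selberg--Delange machinery does not apply, and one genuinely needs that the $a$-number-$a$ squarefree polynomials are equidistributed with respect to divisibility by each fixed prime $P$; equivalently, that the average of $\lambda$ over the $a$-number-$a$ class agrees in the limit with its average over all squarefree polynomials.

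I would try to establish this equidistribution by reopening the geometry-of-numbers count of \cref{thunder} and inserting the squarefree condition $\gcd(f,f')=1$ as an additional sieve on the triples $(c_0(f),c_1(f),c_2(f))$ of prescribed successive minima. In characteristic three this condition is particularly tractable, since $f'=c_1(f)^3-c_2(f)^3X$ depends only on $c_1,c_2$, so the discriminant locus to be sieved out is visible directly in the coordinates governing the height machinery. The principal obstacle is controlling the resulting sieve \emph{uniformly} as both the degree and the first minimum $\mu_1(V)$ grow (recall that $a$ fixed forces $\mu_1(V)\to\infty$ with $g$). If the error terms can be made uniform in $\mu_1(V)$, the local densities at each prime should assemble into the single factor $q/(q+1)$ independent of $a$; combined with the singularity analysis above, this would yield $\lim_{g\to\infty}n_g'(a)/n_g(a)=q/(q+1)$ and hence the conjectured limit.
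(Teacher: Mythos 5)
First, a point of order: \cref{wemadeaconj} is a \emph{conjecture}, and the paper does not prove it. The paper's route to the statement is purely heuristic --- the ``random subspaces of fixed height'' model of \cref{heuristicmodel}, whose statistics are computed exactly in \cref{heurcomp} and found to match the data of \cite{CEZB} --- so there is no proof in the paper to compare yours against. The honest question is whether your proposal closes the gap that the authors themselves could not, and it does not: you explicitly leave the decisive step open. Your reductions up to that point are correct and worth recording. The limit of $\mu_{\epsilon,g}(a)$ from \cref{bigone} is as you say; the ratio $|\cPe(g)|/|\cPe'(g)|=1+q^{-1}$ from \cref{pcounting} correctly converts the conjecture into the equidistribution statement $\lim_g n_g'(a)/n_g(a)=q/(q+1)$; and the convolution $n_g(a)=\sum_d\sum_{f_1}r_{f_1}(d)$ via $f=f_1f_2^2$ and \cref{collins} is exactly the decomposition the paper uses in proving \cref{boundedvalues}. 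Note, however, that \cref{boundedvalues} already extracts everything that follows from this decomposition plus the trivial bound $r_{f_1}(d)\le q^d$, and its conclusion (an interval of width $q^{-1}$ around the conjectured value for $a=0$, width $O(q^{-2a})$ for $a>0$) is strictly weaker than the conjecture. So your generating-function repackaging, while tidy, does not by itself advance beyond what is proved.

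The genuine gap is the one you name yourself: the convolution determines only the $\lambda$-weighted sum $\sum_{f_1}\lambda(f_1)u^{\deg f_1}$ with $\lambda(f_1)=\prod_{P\mid f_1}(1+u^{2\deg P})^{-1}$, and recovering the unweighted count $n_g'(a)$ requires knowing that the class $\{f_1\text{ squarefree}: a(H_{f_1})=a\}$ is equidistributed with respect to divisibility by each irreducible $P$ --- equivalently, that ``$a(H_f)=a$'' is asymptotically independent of ``$P^2\nmid f$'' for every $P$, uniformly enough to multiply the local densities. This is not a technical loose end; it is the entire content of the conjecture in disguise. Your proposed remedy --- reinserting the condition $\gcd(f,f')=1$, i.e.\ $\gcd(c_0^3+c_1^3X+c_2^3X^2,\,c_1^3-c_2^3X)=1$, as a sieve inside the height counts of \cref{thunder} --- would require error terms uniform in both $m$ and $l=\mu_1(V)$ as both grow, and nothing in \cref{lkheight}, \cref{Scounting}, or their proofs (which count subspaces by exact height with no congruence conditions) provides such uniformity. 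Absent that input, the proposal establishes nothing beyond \cref{boundedvalues}, and the statement remains a conjecture. The only unconditional evidence in the paper pointing the other way is \cref{actualvalues}, which pins down $\mu'_{\epsilon,g}(a)$ exactly at the top of the range $a=\lceil g/3\rceil$, $g\equiv1\pmod 3$ --- a regime disjoint from the fixed-$a$, $g\to\infty$ limit at issue here.
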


\subsection{Results on the moduli space of hyperelliptic curves}\label{imoduli}

As above, let $K$ be an algebraic closure of $\Fq$.
For any positive integer $g$, let $\cH_g$ denote the moduli space of smooth hyperelliptic curves of genus $g$ over $K$; for any non-negative integer $a$, let $\cH^a_g$ the open sublocus of $\cH_g$ consisting of hyperelliptic curves with $a$-number $a$.
Using the above counting results we are able to approximate the number of $\Fq$-rational points on $\cH_g^a$ to sufficient accuracy to determine their dimensions.
In particular, we can derive the following theorem.

\begin{thm}\label{thm:strata}
Suppose that $p=3$, that $g\in\Z_{>0}$, and that $a\in\Z_{\geq0}$.
Write $\lceil\cdot\rceil$ for the least integer function.
Then
\[
\codim_{\cH_g}\lp \cH_g^a \rp=
\begin{cases}
2g-1&\text{if }a>\lceil\frac{g}{3}\rceil,\\
2a-1&\text{if }0<a\leq\lceil\frac{g}{3}\rceil,\\
0&\text{if }a=0.
\end{cases}
\]
In fact, if $a > \lceil\frac{g}{3}\rceil$, then $\cH_g^a$ is empty. 
\end{thm}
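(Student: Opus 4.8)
The plan is to read off $\dim\cH_g^a$ from the growth of its point counts over the fields $\FF_{q^n}$ and then subtract from $\dim\cH_g=2g-1$, which is classical. The empty case can be dispatched first: if $a>\lceil g/3\rceil$, then by the vanishing part of \cref{actualvalues} we have $\mu'_{\epsilon,g}(a)=0$ over \emph{every} finite field of characteristic three (the formulas there are valid with $q$ replaced by any prime power). Since the $a$-number is a geometric invariant, and since every genus-$g$ hyperelliptic curve over $K$ is defined over some $\FF_{q^n}$ and admits a squarefree affine model there, no such curve can have $a$-number $a$; hence $\cH_g^a=\emptyset$ and we adopt the convention $\codim_{\cH_g}\cH_g^a=\dim\cH_g=2g-1$.

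For $0<a\le\lceil g/3\rceil$ I would set up a dictionary between the moduli point count and the polynomial counts of \cref{bigone}. Working with monic squarefree models of degree $2g+2$ (the case $\epsilon=2$) and the action of $\PGL_2$ on the branch locus in $\mathbb{P}^1$, the number of $\FF_{q^n}$-points of the coarse space satisfies, up to a factor bounded independently of $n$,
\[
\lv\cH_g^a(\FF_{q^n})\rv \;\asymp\; \frac{\mu'_{2,g}(a)\,\lv\mathcal{P}_2'(g)\rv}{\lv\PGL_2(\FF_{q^n})\rv},
\]
with all quantities evaluated over $\FF_{q^n}$. The bounded factor absorbs the automorphisms of the curves (finite and uniformly bounded for hyperelliptic curves), the passage from coarse points to curves up to twist, and the choice between degree $2g+1$ and $2g+2$ models. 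By \cref{pcounting} we have $\lv\mathcal{P}_2'(g)\rv=q^{2g+2}(1-q^{-1})$, while \cref{bigone} together with \cref{boundedvalues} gives $\mu'_{2,g}(a)=q^{-2a+1}\bigl(1+O(q^{-1})\bigr)$ with positive leading term throughout this range; since $\lv\PGL_2(\FF_{q^n})\rv=q^{3n}(1-q^{-2n})$, the displayed quantity is of exact order $q^{\,n(2g-2a)}$.

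I would then invoke Lang--Weil: because $\lv\cH_g^a(\FF_{q^n})\rv=c\,q^{\,n(2g-2a)}+O\bigl(q^{\,n(2g-2a-1/2)}\bigr)$ with $c>0$ for all large $n$, the dimension of $\cH_g^a$ is exactly $2g-2a$, whence $\codim_{\cH_g}\cH_g^a=(2g-1)-(2g-2a)=2a-1$. The case $a=0$ is identical except that $\mu'_{\epsilon,g}(0)\to 1-q^{-1}$ is a positive constant, so the count grows like $q^{\,n(2g-1)}$, giving full dimension and codimension $0$; this is the open, dense ordinary locus.

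The main obstacle is making the dictionary of the second paragraph precise. One must verify that every hyperelliptic curve over $\overline{\FF_q}$ has a squarefree model of degree $2g+1$ or $2g+2$ that descends appropriately, that the $\PGL_2$-orbits on such models correspond to isomorphism classes with fibers controlled by the uniformly bounded automorphism groups, and that the three-way comparison between $\FF_{q^n}$-points of the coarse space, Frobenius-fixed $\overline{\FF_q}$-isomorphism classes, and genuine $\FF_{q^n}$-curves up to twist costs only a factor bounded independently of $n$. Granting this, the Lang--Weil step is routine; its one subtlety is ensuring a nonzero leading coefficient, which I would supply from the nonvanishing $\mu'_{\epsilon,g}(a)>0$ for $0<a\le\lceil g/3\rceil$ (a consequence of \cref{bigone} and \cref{boundedvalues}), guaranteeing that $\cH_g^a$ is nonempty with a top-dimensional component surviving over the relevant $\FF_{q^n}$.
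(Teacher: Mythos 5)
Your route is in substance the paper's route: relate the number of $\FF_{q^n}$-points of $\cH_g^a$ to the statistics $\mu'_{\epsilon,g}(a)$ via the action of the isomorphism group on squarefree models, and read off the dimension from the growth rate of the point counts (your Lang--Weil step is interchangeable with the paper's appeal to Deligne). The one step you defer --- the ``dictionary'' between models and coarse-space points, which you correctly identify as the main obstacle --- is exactly what the paper supplies: it works with the intrinsic cardinality $\IC(\cH_g^a(\Fq))=\sum_H|\Aut_{\Fq}(H)|^{-1}$, which by \cite[Theorem 10.7.5]{KS} equals the number of $\Fq$-points of $\cH_g^a$ on the nose (no bounded fudge factor needed for twists or automorphisms), and then computes this exactly by orbit--stabilizer for the full group $G$ of $\Fq$-isomorphisms acting on \emph{all} squarefree models of degree $2g+1$ or $2g+2$. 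Two bookkeeping points you would need to repair when making your displayed ratio precise: $|G|=(q-1)(q^3-q)$ includes the scalar action on $Y$, not just $\PGL_2$ on $X$, and correspondingly the models range over all leading coefficients, not just monic ones --- the two extra factors of $q^n-1$ cancel, so your order of growth $q^{n(2g-2a)}$ is unaffected, but neither factor is bounded in $n$ so neither can be absorbed into your ``bounded factor''; and both $\epsilon=1$ and $\epsilon=2$ must be combined (the paper gets $q^{-1}\mu'_{1,g}(a)+\mu'_{2,g}(a)$), though again this only changes the constant. Your positivity argument for the leading term via \cref{boundedvalues} and \cref{bigone} matches the paper's. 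The only piece of the paper's proof with no counterpart in your sketch is the separate treatment of $g\le 4$, which it settles by citing explicit computations before running the counting argument for $g>4$.
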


\begin{proof}
The second statement is simply a rewriting of 
\hyperref[squarefreevanishing]{\cref*{actualvalues} (\ref*{squarefreevanishing})}.
Thus, since $\cH_g$ is $2g-1$ dimensional, we may assume $a\leq\lceil\frac{g}{3}\rceil$.

To prove the first statement, we note that the cases of genus $1, 2, 3,$ and $4$ have been verified by direct calculation as carried out in \cite{ssg1}, \cite{ssg2}, \cite{ssg3}, and \cite{ssg4} respectively.
For $g >4$ we follow the method of proof of Prop 7.1 of~\cite{BG}, counting hyperelliptic curves weighted by the size of their automorphism group.
Using the terminology of \cite{KS}, let $\cH_g^a(\Fq)$ be the set of those curves in $\cH_g^a$ that are defined over $\Fq$, and define the \emph{intrinsic cardinality} of $\cH_g^a(\Fq)$ to be
\[
\IC(\cH_g^a(\Fq)) = \sum_{H \in \cH_g^a(\Fq)}{\left|\Aut_\Fq(H)\right|^{-1}}.
\]
By \cite[Theorem 10.7.5]{KS}, the number of $\Fq$-points on $\cH_g^a$ is $\IC(\cH_g^a(\Fq))$.
Thus, if we show that
\[
\IC(\cH_g^a(\Fq))=
\begin{cases}
q^{2g-2a}+O\lp q^{2g-2a-1}\rp&\text{if }0<a\leq\lceil\frac{g}{3}\rceil\\
q^{2g-1}+O\lp q^{2g-2}\rp&\text{if }a=0,
\end{cases}
\]
we may apply Deligne's proof of the Weil Conjectures \cite{D} to conclude that
\[
\dim{\lp\cH_g^a\rp}=
\begin{cases}
2g-2a&\text{if }0<a\leq\lceil\frac{g}{3}\rceil\\
2g-1&\text{if }a=0;
\end{cases}
\]
the first statement will then follow since $\cH_g$ is $2g-1$ dimensional.

Let $S$ be the set of squarefree polynomials in $\Fq[X]$ of degree $2g+1$ or $2g+2$, so for that every hyperelliptic curve $H\in\cH_g^a(\Fq)$ there exists $f\in S$ such that $H_f$ is a model of $H$.
Of course, the set $S$ is acted upon by the group of $\Fq$-isomorphisms of such curves, 
all of which are given by a linear fractional map on $X$ and a scalar multiple of $Y$ (see \cite{N} for details).
Let $G$ be this group, so that $|G|=(q-1)\left|\PGL(2,q)\right| =(q-1)(q^3-q)$.

We claim that for any $\alpha\in\lp\Fq\rp^\times$ and $\epsilon\in\lb1,2\rb$,
\[
\lv\lb f\in\alpha\cPe '(g)
\mid a(f)=a\rb\rv
=\lp q^{2g+\epsilon}-q^{2g+\epsilon-1}\rp\mu_{\epsilon,g}'(a).
\]
Indeed, the element of $G$ mapping $X\mapsto X$ and $Y\mapsto\sqrt{\alpha}$ induces an $a$-number preserving 
bijection between $\alpha\cPe '(g)$ and $\cPe '(g)$ (in fact, the $a$-number is invariant under any $\overline{\Fq}$-isomorphism, see \cref{collins}).
Thus, as elements of $G$ preserve $a$-numbers, we apply the orbit-stabilizer theorem to see that
\begin{align*}
\IC(\cH_g^a(\Fq))
&=\frac{1}{\lv G\rv}\lv\lb f\in S\mid a(f)=a\rb\rv\\
&=\frac{1}{(q-1)(q^3-q)}
\lp\sum_{\alpha\in\lp\Fq\rp^\times}{\lp q^{2g+1}-q^{2g}\rp\mu_{1,g}'(a)}
+\sum_{\alpha\in\lp\Fq\rp^\times}{\lp q^{2g+2}-q^{2g+1}\rp\mu_{2,g}'(a)}\rp\\\\
&=\frac{q-1}{(q-1)(q^3-q)}
\lp\lp q^{2g+1}-q^{2g}\rp\mu_{1,g}'(a)
+\lp q^{2g+2}-q^{2g+1}\rp\mu_{2,g}'(a)\rp\\
&=\lp\frac{q^{2g-1}}{1+q^{-1}}\rp\left(q^{-1}\mu_{1,g}'(a)+\mu_{2,g}'(a) \right).
\end{align*}
Next, we apply \cref{boundedvalues}~\hyperref[corpart1]{(\ref*{corpart1})} and~\hyperref[corpart2]{(\ref*{corpart2})} to deduce
\[
\IC(\cH_g^a(\Fq))
=\lp\frac{q^{2g-1}}{1+q^{-1}}\rp\left(q^{-1}\mu_{1,g}'(a)+\mu_{2,g}'(a) \right)=
\begin{cases}
q^{2g-2a}+O\lp q^{2g-2a-1}\rp&\text{if }0<a\leq\left [\frac{g}{3}\right ]\\
q^{2g-1}+O\lp q^{2g-2}\rp&\text{if }a=0,
\end{cases}
\]
as desired.
\end{proof}

\begin{cor}
In characteristic 3, if the strata $\cH_g^a$ are pure and non-empty, then they are irreducible. 
\end{cor}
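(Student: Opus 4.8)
The plan is to deduce geometric irreducibility from the point counts already established in the proof of \cref{thm:strata}, applied uniformly over every finite extension of $\Fq$. Throughout, ``irreducible'' means geometrically irreducible over $K=\overline{\Fq}$; since the $a$-number of a curve is stable under base change and commutes with Frobenius (as noted in \cref{collins}), the stratum $\cH_g^a$ descends to a locus defined over $\Fq$, so geometric Frobenius $\mathrm{Frob}_q$ acts on its finite set of geometrically irreducible components. The strategy is to show that this action fixes exactly one component over \emph{every} extension of $\Fq$, which together with purity forces a single component.

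First I would observe that the computation in the proof of \cref{thm:strata} is entirely uniform in $q$: the probabilities $\mu_{\epsilon,g}'(a)$ of \cref{bigone} and the bounds of \cref{boundedvalues} hold for any power of $3$, so replacing $\Fq$ by $\FF_{q^n}$ (keeping $g$, $a$, and $p=3$ fixed) and rerunning the orbit--stabilizer calculation yields
\[
\IC\!\lp\cH_g^a(\FF_{q^n})\rp=
\begin{cases}
q^{n(2g-2a)}+O\!\lp q^{n(2g-2a-1)}\rp&\text{if }0<a\le\lceil\tfrac{g}{3}\rceil,\\
q^{n(2g-1)}+O\!\lp q^{n(2g-2)}\rp&\text{if }a=0,
\end{cases}
\]
for every $n\ge1$. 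In particular the leading coefficient is exactly $1$ in all cases. By \cite[Theorem~10.7.5]{KS} this intrinsic cardinality equals the number of $\FF_{q^n}$-points of $\cH_g^a$, so we obtain a clean count $|\cH_g^a(\FF_{q^n})|=q^{nd}+O(q^{n(d-1)})$ with $d=\dim\cH_g^a$, the latter being the geometric (hence $n$-independent) dimension recorded in \cref{thm:strata}.

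Next I would invoke the Lang--Weil/Deligne estimates \cite{D}: for a variety over $\Fq$, the leading coefficient of $|\,\cdot\,(\FF_{q^n})|$ is the number of top-dimensional geometrically irreducible components fixed by $\mathrm{Frob}_q^{\,n}$. Here the purity hypothesis is essential---it guarantees that \emph{every} geometrically irreducible component has dimension $d$, and hence contributes a full $q^{nd}$ to the leading term precisely when it is $\mathrm{Frob}_q^{\,n}$-fixed. Writing $W_1,\dots,W_r$ for these components and $\sigma$ for the permutation they undergo under $\mathrm{Frob}_q$, the count above says $\sigma^n$ has exactly one fixed point for every $n\ge1$. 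Taking $n$ to be the least common multiple of the cycle lengths of $\sigma$ makes $\sigma^n$ the identity, so it fixes all $r$ components; comparison with the leading coefficient $1$ forces $r=1$, and $\cH_g^a$ is irreducible.

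The main obstacle I anticipate is bookkeeping rather than conceptual: one must verify that the $\IC$ computation really is uniform in $q$ with leading coefficient precisely $1$ over \emph{every} $\FF_{q^n}$ (not merely up to an unspecified constant), and that \cite[Theorem~10.7.5]{KS} legitimately identifies $\IC$ with the scheme-theoretic point count of the coarse space, so that the Lang--Weil interpretation of the leading coefficient is valid. Establishing purity itself is not part of this argument---it is taken as a hypothesis---which is exactly why the corollary is stated conditionally.
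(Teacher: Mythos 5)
Your argument is correct and is exactly the argument the paper intends: the corollary is stated without proof as an immediate consequence of the point-count asymptotics in \cref{thm:strata}, whose leading coefficient $1$ (uniform over all $\FF_{q^n}$, since \cref{bigone} and \cref{boundedvalues} hold for every power of $3$) counts the $\mathrm{Frob}_q^n$-fixed top-dimensional geometric components, and purity plus the cycle-length argument then forces a single component. Your write-up correctly identifies where purity is used and why the statement must be conditional on it.
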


Note that the for $g \leq 4$ the calculations of \cite{ssg1}, \cite{ssg2}, \cite{ssg3}, and \cite{ssg4} show that the non-empty a-number strata in characteristic 3 are indeed irreducible, and so it seems reasonable to conjecture that this continues to be the case for all larger genera.

\section*{Acknowledgments}\label{Acknowledgements}

The authors would like to thank Rachel Pries, Jeff Achter, Joseph Gunther, Avinash Kulkarni, and Adam Logan for very helpful conversations. 

\bibliography{RandomCartier}
\bibliographystyle{amsalpha}

\end{document}